\title{Mittag-Leffler Analysis II: Application to the fractional heat equation}
\date{\today}
\author{Martin~Grothaus \and Florian~Jahnert}
\begin{document}

\maketitle
%\tableofcontents

\begin{abstract}
Mittag-Leffler analysis is an infinite dimensional analysis with respect to non-Gaussian measures of Mittag-Leffler type which generalizes the powerful theory of Gaussian analysis and in particular white noise analysis. In this paper we further develop the Mittag-Leffler analysis by characterizing the convergent sequences in the distribution space. Moreover we provide an approximation of Donsker's delta by square integrable functions. Then we apply the structures and techniques from Mittag-Leffler analysis in order to show that a Green's function to the time-fractional heat equation can be constructed using generalized grey Brownian motion (ggBm) by extending the fractional Feynman-Kac formula \cite{Sch92}. Moreover we ana\-lyse ggBm, show its differentiability in a distributional sense and the existence of corresponding local times.
\end{abstract}

\noindent {\bf Keywords:} Non-Gaussian analysis, generalized functions, generalized grey Brownian motion, time-fractional heat equation \\

\noindent {\bf Mathematics Subject Classification (2010):} Primary: 46F25, 60G22. Secondary: 26A33, 33E12.  

\newpage

\section{Introduction}
The Mittag-Leffler analysis is part of the field of research which tries to transfer the concepts and results known from Gaussian analysis to a non-Gaussian setting. First steps in this direction were made in \cite{Ito88} using the Poisson measure. This approach was generalized with the help of biorthogonal systems, called Appell systems, see e.g.~\cite{Da91, ADKS96, KSWY98}, which are suitable for a wide class of measures including the Gaussian measure and the Poisson measure \cite{KSS97}. Two properties of the measure $\mu$ are essential: An analyticity condition of its Laplace transform and a non-degeneracy or positivity condition (see also \cite{KK99}). In \cite{KSWY98} a test function space $(\cN)^1$ and a distribution space $(\cN)^{-1}_{\mu}$ is constructed and the distributions are characterized using an integral transform and spaces of holomorphic functions on locally convex spaces. The corresponding results in Gaussian analysis, see e.g.~\cite{KLPSW96, KLS96, GKS99}, paved the way for applying successfully the abstract theory to numerous concrete problems such as intersection local times for Brownian motion \cite{FHSW97} as well as for fractional Brownian motion \cite{DOS08, OSS11} and stochastic partial differential equations, see e.g.~\cite{GKU99, GKS00} or the monograph \cite{HOUZ96}.   

In Mittag-Leffler analysis, the measures $\mu_\beta$, $0<\beta<1$, are defined via their characteristic function given by Mittag-Leffler functions. It is shown in \cite{GJRS14} that the Mittag-Leffler measures belong to the class of measures for which Appell systems exist. Furthermore the (weak) integrable functions with values in the distribution space $(\cN)^{-1}_{\mu_\beta}$ are characterized and a distribution is constructed which is a generalization of Donsker's delta in Gaussian analysis.

We further develop the theory of Mittag-Leffler analysis in this work by characterizing the convergent sequences in the distribution space $(\cN)^{-1}_{\mu_\beta}$. This is done in Section \ref{Sec:MLA}, where we first repeat the main steps for the construction of a Mittag-Leffler analysis from \cite{GJRS14} and we introduce the test function space $(\cN)^1$ and the distribution space $(\cN)^{-1}_{\mu_\beta}$. In this way we give an approximation of Donsker's delta by a sequence of Bochner integrals in the Hilbert space $L^2(\mu_\beta)$.

In Section \ref{Sec:gna}, we demonstrate that generalized grey Brownian motion (short ggBm) $B^{\alpha,\beta}$, $0<\alpha<2$, can be defined in the setting of Mittag-Leffler analysis using the probability space $(\cS'(\R),\cB,\mu_\beta)$, where $\cS'(\R)$ are the tempered distributions, $\cB$ denotes the cylinder $\sigma-$algebra and $\mu_\beta$, $0<\beta<1$, are the Mittag-Leffler measures. This stochastic process was analysed for example in \cite{Sch90, MM09} in a slightly different setting. In Section \ref{Sec:Heateq} we establish the connection of ggBm to the fractional heat equation. This equation has already been studied for example in \cite{SW89, Mai95, Koch90, EK04}. In \cite{OB09} the solution to the time-fractional heat equation is interpreted as the density of the composition of certain stochastic processes. We obtain a time-fractional heat kernel by extending the fractional Feynman-Kac formula \cite{Sch92} to localized (Dirac delta) initial values and justify in this way that ggBm plays the same role for the time-fractional heat equation as Brownian motion does for the usual heat equation. In Section \ref{Sec:gnp} we apply the characterizations from Mittag-Leffler analysis to show differentiability of $B^{\alpha,\beta}$ in $(\cN)^{-1}_{\mu_\beta}$ and we prove the existence of ggBm local times using Donsker's delta, see also \cite{DSE13}. 

Summarizing, the core results of this article are:
\begin{itemize}
	\item Giving a characterization of convergent sequences $(\Phi_n)_{n\in\N}$ in the distribution space $(\cN)^{-1}_{\mu_\beta}$, see Theorem \ref{charconv}.
	\item Providing an approximation of Donsker's delta by a sequence of Bochner integrals, see Theorem \ref{Theo:Approxdonsker}.
	\item Constructing the Green's function to time-fractional heat equations by establishing a time-fractional Feynman-Kac formula, see Theorem \ref{Theo:FFK} and Remark \ref{Rem:FFK}.
	\item Showing differentiability of generalized grey Brownian motion in a distributional sense, see Theorem \ref{Theo:Diff}.
	\item Constructing local times of grey Brownian motions, see Theorem \ref{Theo:LocalTimes}.
\end{itemize}

\section{Mittag-Leffler Analysis}\label{Sec:MLA}

\subsection{Prerequisites}
In this section, we repeat the construction of Mittag-Leffler measures as probability measures on a conuclear space $\cN'$ from \cite{Sch92}. First we need to collect some facts about nuclear triples used in this paper. For details see e.g.~\cite{Sch71, RS}.

Let $\cH$ be a real separable Hilbert space with inner product $(\cdot,\cdot)$ and corresponding norm $\abs{\cdot}$. Let $\cN$ be a nuclear space which is continuously and densely embedded in $\cH$ and let $\cN'$ be its dual space. The canonical dual pairing between $\cN'$ and $\cN$ is denoted by $\lab\cdot,\cdot\rab$, and by identifying $\cH$ with its dual space via the Riesz isomorphism we get the inclusions $\cN\subset\cH\subset\cN'$. In particular $\lab f,\xi\rab=(f,\xi)$ for $f\in\cH$, $\xi\in\cN$.

We assume that $\cN$ can be represented by a countable family of Hilbert spaces as follows: For each $p\in\N$ let $\cH_p$ be a real separable Hilbert space with norm $\abs{\cdot}_p$ such that $\cN\subset\cH_{p+1}\subset\cH_p\subset\cH$ continuously and the inclusion $\cH_{p+1}\subset\cH_p$ is a Hilbert Schmidt operator. It is no loss of generality to assume $\abs{\cdot}_p\le\abs{\cdot}_{p+1}$ on $\cH_{p+1}$ and $\cH_0=\cH$, $\abs{\cdot}_0=\abs{\cdot}$. The space $\cN$ is assumed to be the projective limit of the spaces $(\cH_p)_{p\in\N}$, that is $\cN=\bigcap_{p\in\N}\cH_p$ and the topology on $\cN$ is the coarsest locally convex topology such that all inclusions $\cN\subset\cH_p$ are continuous.

This also gives a representation of $\cN'$ in terms of an inductive limit: Let $\cH_{-p}$ be the dual space of $\cH_p$ with respect to $\cH$ and let the dual pairing between $\cH_{-p}$ and $\cH_p$ be denoted by $\lab\cdot,\cdot\rab$ as well. Then $\cH_{-p}$ is a Hilbert space and we denote its norm by $\abs{\cdot}_{-p}$. It follows by general duality theory that $\cN'=\bigcup_{p\in\N}\cH_{-p}$, and we may equip $\cN'$ with the inductive topology, that is the finest locally convex topology such that all inclusions $\cH_{-p}\subset\cN'$ are continuous. We end up with the following chain of dense and continuous inclusions:
\[ \cN\subset\cH_{p+1}\subset\cH_p\subset\cH\subset\cH_{-p}\subset\cH_{-(p+1)}\subset\cN'. \]

We also use tensor products and complexifications of these spaces. In the following we always identify $f = [f_1,f_2]\in\cH_{p,\C}, f_1,f_2\in\cH_p$ for $p\in\Z$ with $f=f_1 + if_2$. The notation $\abs{\cdot}_p$ is kept for the norm and $\lab\cdot,\cdot\rab$ denotes the bilinear dual pairing.

\begin{remark}\label{Rem:Nprop}
$\cN$ is a perfect space, i.e.~every bounded and closed set in $\cN$ is compact. As a consequence strong and weak convergence coincide in both $\cN$ and $\cN'$, see page 73 in \cite{GV64} and Section I.6.3 and I.6.4 in \cite{GS68}.
\end{remark}

\begin{example}\label{ex:wnsetting}
Consider the white noise setting, where $\cN = \cS(\R)$ is the space of Schwartz test functions, $\cH = L^2(\R,\rmd x)$ and $\cN' = \cS'(\R)$ are the tempered distributions. $\cS(\R)$ is dense in $L^2(\R,\rmd x)$ and can be represented as the projective limit of certain Hilbert spaces $\cH_p$, $p>0$, with norms denoted by $\abs{\cdot}_p$, see e.g. \cite{Kuo}. Thus the white noise setting is an example for the nuclear triple described above.
\end{example}

\subsection{The Mittag-Leffler measure}
As Mittag-Leffler measures $\mu_\beta$, $0<\beta<1$, we denote the probability measures on $\cN'$ whose characteristic functions are given via Mittag-Leffler functions. The Mittag-Leffler function was introduced by G\"osta Mittag-Leffler in \cite{ML05} and we also consider a generalization first appeared in \cite{W05a}. 
\begin{definition}
For $0<\beta<\infty$ the Mittag-Leffler function is an entire function defined by its power series
\[
	\rmE_\beta (z) := \sum_{n=0}^\infty \frac{z^n}{\Gamma(\beta n+1)},\quad z\in\C.
\]
Here $\Gamma$ denotes the well-known Gamma function which is an extension of the factorial to complex numbers such that $\Gamma(n+1)=n!$ for $n\in\N$. Furthermore we define for $\gamma\in\C$ the following entire function of Mittag-Leffler type, see also \cite{EMOT55},
\[
	\rmE_{\beta,\gamma}(z) := \sum_{n=0}^\infty \frac{z^n}{\Gamma(\beta n+\gamma)},\quad z\in\C.
\]
\end{definition}
The Mittag-Leffler function is an entire function and thus absolutely convergent on compact sets. Hence we may interchange sum and derivative and calculate the derivative of $\rmE_\beta$:
\begin{lemma}\label{derEbeta}
For the derivative of the Mittag-Leffler function $\rmE_\beta$, $\beta>0$, it holds
\[
	\frac{\rmd}{\rmd z} \rmE_\beta(z) = \frac{\rmE_{\beta,\beta}(z)}{\beta},\quad z\in\C.
\]
\end{lemma}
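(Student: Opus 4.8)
The plan is straightforward term-by-term differentiation, justified by the entire (hence locally uniformly convergent) nature of the power series defining $\rmE_\beta$. Since $\rmE_\beta(z) = \sum_{n=0}^\infty \frac{z^n}{\Gamma(\beta n + 1)}$ converges absolutely on every compact subset of $\C$, the series may be differentiated termwise, and this is the only analytic input needed. I would write
\[
	\frac{\rmd}{\rmd z}\rmE_\beta(z) = \sum_{n=1}^\infty \frac{n z^{n-1}}{\Gamma(\beta n + 1)},
\]
where the $n=0$ term drops out because it is constant.

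The next step is a reindexing combined with an elementary manipulation of the Gamma function. Setting $m = n-1$ shifts the sum to $\sum_{m=0}^\infty \frac{(m+1) z^m}{\Gamma(\beta(m+1)+1)} = \sum_{m=0}^\infty \frac{(m+1) z^m}{\Gamma(\beta m + \beta + 1)}$. The task is then to recognize this as $\frac{1}{\beta}\rmE_{\beta,\beta}(z) = \frac{1}{\beta}\sum_{m=0}^\infty \frac{z^m}{\Gamma(\beta m + \beta)}$. Comparing coefficients of $z^m$, I need to verify the identity
\[
	\frac{m+1}{\Gamma(\beta m + \beta + 1)} = \frac{1}{\beta}\cdot\frac{1}{\Gamma(\beta m + \beta)}
\]
for every $m \ge 0$.

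This coefficient identity is where the only real (though still routine) work sits, and I would handle it using the functional equation $\Gamma(x+1) = x\,\Gamma(x)$ with $x = \beta m + \beta = \beta(m+1)$. This gives $\Gamma(\beta(m+1)+1) = \beta(m+1)\,\Gamma(\beta(m+1))$, so that
\[
	\frac{m+1}{\Gamma(\beta(m+1)+1)} = \frac{m+1}{\beta(m+1)\,\Gamma(\beta(m+1))} = \frac{1}{\beta\,\Gamma(\beta m + \beta)},
\]
which is exactly the required coefficient. Substituting back and factoring out $1/\beta$ yields the claimed formula. I do not anticipate a genuine obstacle here; the main point to state carefully is the justification for differentiating under the summation sign, which the excerpt already flags by noting that $\rmE_\beta$ is entire and hence absolutely convergent on compact sets, so sum and derivative may be interchanged.
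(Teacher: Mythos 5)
Your proof is correct and matches the paper's approach exactly: the paper justifies termwise differentiation by noting that $\rmE_\beta$ is entire, and the rest is the same reindexing together with the functional equation $\Gamma(\beta(m+1)+1)=\beta(m+1)\,\Gamma(\beta(m+1))$. You have simply written out the coefficient computation that the paper leaves implicit.
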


The mapping $\lcb t\in\R\mid t>0\rcb\ni t\mapsto\rmE_\beta(-t)\in\R$ is completely monotonic for $\beta\in (0,1]$, see \cite{P48, Fel71}. This is sufficient to show that
\[ \cN\ni\xi\mapsto\rmE_\beta\lb-\halb\lab\xi,\xi\rab\rb\in\R \]
is a characteristic function on $\cN$, see \cite{Sch92}. Using the theorem of Bochner and Minlos, see e.g. \cite{BK95}, the following probability measures on $\cN'$, equipped with its cylindrical $\sigma$-algebra, can be defined:

\begin{definition}\label{Def:MLmeas}
For $\beta\in(0,1]$ the Mittag-Leffler measure is defined to be the unique probability measure $\mu_\beta$ on $\cN'$ such that for all $\xi\in\cN$
\[
	\int_{\cN'} \exp(i\lab\omega,\xi\rab)\,\rmd\mu_\beta(\omega) = \rmE_\beta\lb-\frac{1}{2}\lab\xi,\xi\rab\rb.
\]
The corresponding $L^p$ spaces of complex-valued functions are denoted by $L^p(\mu_\beta):=L^p(\cN',\mu_\beta;\C)$ for $p\geq 1$ with corresponding norms $\lVb\cdot\rVb_{L^p(\mu_\beta)}$. By $\E_{\mu_\beta}(f) := \int_{\cN'} f(\omega)\,\rmd\mu_\beta(\omega)$ we define the expectation of $f\in L^1(\mu_\beta)$.
\end{definition}

In \cite{Sch92} all moments of $\mu_\beta$ are calculated:
\begin{lemma}\label{moments2}
Let $\xi\in\cN$ and $n\in\N$. Then
\[ \int_{\cN'} \lab\omega,\xi\rab^{2n+1}\,\rmd\mu_\beta(\omega)=0 \quad \text{and} \quad \int_{\cN'} \lab\omega,\xi\rab^{2n}\,\rmd\mu_\beta(\omega)= \frac{(2n)!}{\Gamma(\beta n+1)2^n}\lab\xi,\xi\rab^n. \]
In particular $\lVb\lab\cdot,\xi\rab\rVb_{L^2(\mu_\beta)}^2=\frac{1}{\Gamma(\beta+1)}\abs{\xi}^2$.
\end{lemma}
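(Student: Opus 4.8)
The plan is to read off all moments from the characteristic function in Definition~\ref{Def:MLmeas} by a power-series comparison. Fix $\xi\in\cN$ and consider the real-valued random variable $\lab\cdot,\xi\rab$ on $(\cN',\mu_\beta)$. Substituting $t\xi$ for $\xi$ (with $t\in\R$) into the defining identity and using $\lab t\xi,t\xi\rab=t^2\lab\xi,\xi\rab$ yields
\[ \int_{\cN'}\exp\lb it\lab\omega,\xi\rab\rb\,\rmd\mu_\beta(\omega)=\rmE_\beta\lb-\frac{t^2}{2}\lab\xi,\xi\rab\rb. \]
On the right-hand side I insert the defining power series of $\rmE_\beta$, obtaining
\[ \rmE_\beta\lb-\frac{t^2}{2}\lab\xi,\xi\rab\rb=\sum_{n=0}^\infty\frac{(-1)^n}{\Gamma(\beta n+1)}\lb\frac{\lab\xi,\xi\rab}{2}\rb^{\!n}t^{2n}, \]
so that only even powers of $t$ occur. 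On the left-hand side I expand $\exp$ and interchange summation and integration to get $\sum_{k=0}^\infty\frac{i^k t^k}{k!}\int_{\cN'}\lab\omega,\xi\rab^k\,\rmd\mu_\beta(\omega)$.

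Comparing coefficients of $t^k$ in the two analytic functions then finishes the computation. The absence of odd powers on the right forces the odd moments $\int_{\cN'}\lab\omega,\xi\rab^{2n+1}\,\rmd\mu_\beta(\omega)$ to vanish. For the even moments, matching the coefficient of $t^{2n}$ gives
\[ \frac{i^{2n}}{(2n)!}\int_{\cN'}\lab\omega,\xi\rab^{2n}\,\rmd\mu_\beta(\omega)=\frac{(-1)^n}{\Gamma(\beta n+1)}\lb\frac{\lab\xi,\xi\rab}{2}\rb^{\!n}, \]
and since $i^{2n}=(-1)^n$ this rearranges at once to the asserted formula $\frac{(2n)!}{\Gamma(\beta n+1)2^n}\lab\xi,\xi\rab^n$. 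The ``in particular'' claim is the case $n=1$: it gives $\int_{\cN'}\lab\omega,\xi\rab^2\,\rmd\mu_\beta(\omega)=\frac{2!}{\Gamma(\beta+1)2}\lab\xi,\xi\rab=\frac{1}{\Gamma(\beta+1)}\abs{\xi}^2$, which is exactly $\lVb\lab\cdot,\xi\rab\rVb_{L^2(\mu_\beta)}^2$ because $\lab\cdot,\xi\rab$ is real-valued.

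The only genuine obstacle is the analytic justification: I must know a priori that every moment $\int_{\cN'}\lab\omega,\xi\rab^k\,\rmd\mu_\beta(\omega)$ is finite, and that the termwise integration of the exponential series is legitimate. I would settle both points through the complete monotonicity already exploited in the construction of $\mu_\beta$. Since $t\mapsto\rmE_\beta(-t)$ is completely monotone on $(0,\infty)$ with $\rmE_\beta(0)=1$, Bernstein's theorem provides a probability measure $\rho_\beta$ on $[0,\infty)$ with $\rmE_\beta(-s)=\int_0^\infty e^{-s\tau}\,\rmd\rho_\beta(\tau)$. Plugging this into the characteristic function above exhibits $\lab\cdot,\xi\rab$ as a Gaussian variance mixture (conditionally $\mathcal N(0,\tau\lab\xi,\xi\rab)$, mixed over $\tau\sim\rho_\beta$), from which finiteness of all moments and the dominated-convergence argument needed for the interchange follow immediately; it even reproduces the formula independently, via the Gaussian moments and $\int_0^\infty\tau^n\,\rmd\rho_\beta(\tau)=n!/\Gamma(\beta n+1)$. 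Alternatively, one can invoke the standard fact that a characteristic function which is $2n$ times differentiable at the origin --- here it is real-analytic, being $\rmE_\beta(-t^2\lab\xi,\xi\rab/2)$ --- has finite moment of order $2n$ given by $(-1)^n$ times its derivative at $0$, which directly validates the coefficient comparison.
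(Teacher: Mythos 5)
Your proposal is correct, and it is worth noting that the paper itself offers no proof of Lemma \ref{moments2} at all --- it simply cites \cite{Sch92} --- so there is nothing in the text to match step for step. Your route (expand the characteristic function $t\mapsto\rmE_\beta\lb-\tfrac{t^2}{2}\lab\xi,\xi\rab\rb$ and compare Taylor coefficients) is the natural one, and your analytic justification is exactly consonant with machinery the paper deploys elsewhere: the Bernstein mixing measure $\rho_\beta$ you invoke is precisely $\rmd\rho_\beta(\tau)=M_\beta(\tau)\,\rmd\tau$, by the identity \eqref{eq:LaplaceMbeta} in Appendix \ref{Sec:H-function}, and the resulting Gaussian variance-mixture picture is the same one the paper uses for the finite-dimensional densities in Proposition \ref{Pro:Properties} and in Theorem \ref{Theo:FFK}. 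One small point you could tighten: the representation $\rmE_\beta(-s)=\int_0^\infty \e^{-s\tau}\,\rmd\rho_\beta(\tau)$ is a priori valid only for $s\geq 0$, so the moment identity $\int_0^\infty\tau^n\,\rmd\rho_\beta(\tau)=n!/\Gamma(\beta n+1)$ needs a word of justification --- e.g.\ differentiate the Laplace transform at $s>0$ to get $\int_0^\infty\tau^n\e^{-s\tau}\,\rmd\rho_\beta(\tau)=(-1)^n\tfrac{\rmd^n}{\rmd s^n}\rmE_\beta(-s)$ and let $s\downarrow 0$ using monotone convergence together with the entirety of $\rmE_\beta$ (alternatively, Landau's theorem on the abscissa of convergence). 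But this is a routine repair, not a gap, and your second alternative --- the classical fact that a characteristic function which is $2n$ times differentiable at the origin has finite $2n$-th moment equal to $(-1)^n\phi^{(2n)}(0)$, applied to the real-analytic $\phi(t)=\rmE_\beta\lb-\tfrac{t^2}{2}\lab\xi,\xi\rab\rb$ --- already secures finiteness of all moments and validates the coefficient comparison (odd derivatives of the even function $\phi$ vanish, giving the odd moments) without the mixture at all.
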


\begin{remark}\label{remark:extension}
Using Lemma \ref{moments2} it is possible to define $\lab\cdot,\eta\rab$ for $\eta\in\cH$ as the $L^2(\mu_\beta)$-limit of $\lab\cdot,\xi_n\rab$, where $\lb\xi_n\rb_{n\in\N}$ is a sequence in $\cN$ converging to $\eta$ in $\cH$.
\end{remark}

\begin{lemma}\label{Lemma:isometry}
For $\xi,\eta\in\cN$ it holds that
\[
	\int_{\cN'} \lab\omega,\xi\rab\lab\omega,\eta\rab\,\rmd\mu_\beta = \frac{1}{\Gamma(\beta+1)} \lab\xi,\eta\rab.
\]
\end{lemma}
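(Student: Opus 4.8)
The plan is to derive this bilinear identity from the diagonal second-moment formula in Lemma \ref{moments2} by a polarization argument. Setting $n=1$ in the even-moment formula gives
\[
	\int_{\cN'} \lab\omega,\zeta\rab^2\,\rmd\mu_\beta = \frac{1}{\Gamma(\beta+1)}\lab\zeta,\zeta\rab
\]
for every $\zeta\in\cN$, which is exactly the diagonal case $\xi=\eta=\zeta$ of the claim. The task is therefore only to recover the off-diagonal values of the symmetric bilinear form $B(\xi,\eta):=\int_{\cN'}\lab\omega,\xi\rab\lab\omega,\eta\rab\,\rmd\mu_\beta$ from its quadratic form $B(\zeta,\zeta)$.

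First I would record that the dual pairing is linear in its second argument, so that $\lab\omega,\xi+\eta\rab=\lab\omega,\xi\rab+\lab\omega,\eta\rab$ pointwise in $\omega$. This yields the elementary pointwise identity
\[
	\lab\omega,\xi+\eta\rab^2-\lab\omega,\xi-\eta\rab^2 = 4\,\lab\omega,\xi\rab\lab\omega,\eta\rab.
\]
Integrating both sides against $\mu_\beta$ and applying the diagonal formula above to $\zeta=\xi+\eta$ and $\zeta=\xi-\eta$ gives
\[
	4\,B(\xi,\eta) = \frac{1}{\Gamma(\beta+1)}\lb\lab\xi+\eta,\xi+\eta\rab-\lab\xi-\eta,\xi-\eta\rab\rb = \frac{4}{\Gamma(\beta+1)}\lab\xi,\eta\rab,
\]
where the last equality uses bilinearity and symmetry of $\lab\cdot,\cdot\rab$ on $\cN$. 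Dividing by $4$ produces the asserted formula.

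Before carrying this out I would note the (minor) integrability check: by Lemma \ref{moments2} each $\lab\cdot,\xi\rab$ lies in $L^2(\mu_\beta)$, so the product $\lab\cdot,\xi\rab\lab\cdot,\eta\rab$ is in $L^1(\mu_\beta)$ by the Cauchy--Schwarz inequality, and all integrals above are finite and the interchange of integration with the finite linear combination is legitimate. There is no substantive obstacle here; the entire content of the lemma is already packaged in the $n=1$ moment, and polarization is purely algebraic. The only point demanding a word of care is the pointwise linearity of $\lab\omega,\cdot\rab$, which holds for every fixed $\omega\in\cN'$ by definition of the canonical dual pairing, so the pointwise identity is valid $\mu_\beta$-almost everywhere (in fact everywhere).
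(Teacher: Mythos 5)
Your proof is correct, and it takes a genuinely different route from the paper's. The paper proves the lemma directly from Definition \ref{Def:MLmeas}: it applies the operator $-\tfrac{\partial^2}{\partial a_1\partial a_2}$ at $a_1=a_2=0$ to both sides of the identity $\int_{\cN'}\exp\lb i\lab\omega,a_1\xi+a_2\eta\rab\rb\,\rmd\mu_\beta = \rmE_\beta\lb-\halb\lab a_1\xi+a_2\eta,a_1\xi+a_2\eta\rab\rb$, which requires two analytic justifications: interchanging sum and derivative on the right (via entirety of $\rmE_\beta$) and interchanging derivative and integral on the left (via a dominating function $\e^{2\abs{\lab\omega,\xi\rab}}\e^{2\abs{\lab\omega,\eta\rab}}$ shown to be in $L^1(\mu_\beta)$ by Lemma 4.1 in the cited earlier work). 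You instead polarize the $n=1$ case of Lemma \ref{moments2}, which the paper has already quoted from the literature, so your argument is purely algebraic: the pointwise identity $\lab\omega,\xi+\eta\rab^2-\lab\omega,\xi-\eta\rab^2=4\lab\omega,\xi\rab\lab\omega,\eta\rab$, linearity of the integral, and bilinearity of the pairing, with Cauchy--Schwarz supplying the (only) integrability check. Since Lemma \ref{moments2} appears before the statement and is not derived from it, there is no circularity. What each approach buys: yours is shorter and dispenses entirely with domination and differentiation-under-the-integral arguments, provided one accepts the second moment as given; the paper's differentiation-of-the-characteristic-function technique is self-contained relative to the definition of $\mu_\beta$ and scales to mixed moments $\int\lab\omega,\xi\rab^m\lab\omega,\eta\rab^n\,\rmd\mu_\beta$ that polarization of a single quadratic form does not reach. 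One presentational nit: you say the pairing is linear in its second argument and conclude pointwise linearity of $\lab\omega,\cdot\rab$; strictly one also uses here that for real $\omega\in\cN'$ and real $\xi,\eta$ all quantities are real so the squares in the polarization identity behave as expected, which is immediate in this setting but worth a half-sentence.
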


\begin{proof}
By definition we have for each $a_1,a_2\in\R$
\[
	\int_{\cN'} \exp\lb i\lab\omega,a_1\xi+a_2\eta\rab\rb\,\rmd\mu_\beta = \sum_{n=0}^\infty \frac{(-1)^n}{2^n\Gamma(\beta n+1)}  \lab a_1\xi+a_2\eta,a_1\xi+a_2\eta\rab^{n} .
\]
Applying the operator $-\tfrac{\partial^2}{\partial a_1\partial a_2}$ to both sides and evaluate at $a_1=a_2=0$ we get the result. Interchanging of sum and derivative is possible since the Mittag-Leffler function is entire. Further for all $a_1,a_2\in (-1,1)$ it holds
\begin{align*}
	\abs{\frac{\partial}{\partial a_1} i\lab \omega,\eta\rab\exp\lb i\lab\omega,a_1\xi+a_2\eta\rab\rb} &\leq \abs{\lab\omega,\eta\rab} \e^{\abs{a_2}{\lab\omega,\eta\rab}} \abs{\lab\omega,\xi\rab} \e^{\abs{a_1}\abs{\lab\omega,\xi\rab}} \\
	&\leq \e^{2\abs{\lab\omega,\xi\rab}} \e^{2\abs{\lab\omega,\eta\rab}} := g(\omega).
\end{align*}
$g\in L^1(\mu_\beta)$ by Lemma 4.1 in \cite{GJRS14}. Hence $\partial/\partial a_1$ interchanges with the integral. A similar argument yields that also the integral and $\partial/\partial a_2$ interchange.
\end{proof}

\begin{remark}
It is shown in \cite{GJRS14} that $f\in L^2(\mu_\beta)$ does not admit a chaos expansion if $\beta\neq 1$. This means that there is no system of polynomials $\lb I_n(\xi)\rb_{n\in\N}$, $\xi\in\cN$, on $\cN'$ fulfilling the following properties simultaneously:
\begin{enumerate}[label=\textit{(\roman*)}]
	\item $I_n(\xi) = p_{n,\xi}(\lab\cdot,\xi\rab)$ for some monic polynomial $p_{n,\xi}$ on $\R$ or $\C$ of degree $n\in\N$.
	\item $I_n(\xi) \perp I_m(\xi)$ in $L^2(\mu_\beta)$ for $n\neq m$.
	\item $I_n(\xi) \perp I_n(\eta)$ in $L^2(\mu_\beta)$ if $\xi\perp\eta$ in $\cH$.
\end{enumerate}
This is in contrast to the case $\beta=1$ where the Mittag-Leffler measure is a Gaussian measure. It is a well-known fact in Gaussian analysis that such a system of orthogonal polynomials, the Wick-ordered polynomials, always exists, see e.g. \cite{Kuo, HKPS}. 
\end{remark}

\subsection{Distributions and Donsker's delta}
Instead of using a system of orthogonal polynomials, it was proposed in \cite{GJRS14} to use Appell systems, compare \cite{KSWY98}. These are biorthogonal systems allowing to construct a test function and a distribution space. As shown in \cite{GJRS14} the measures $\mu_\beta$, $0<\beta<1$, satisfy the following:
\begin{enumerate}\label{Assumptions}
	\item[(A1)] The measure $\mu_\beta$ has an analytic Laplace transform in a neighborhood of zero, i.e. the mapping
	\[
		\cN_\C\ni\theta\mapsto l_{\mu_\beta}(\varphi) := \int_{\cN'} \exp(\lab\omega,\theta\rab) \, \rmd\mu_\beta(\omega) = \rmE_\beta\lb\halb\lab\theta,\theta\rab\rb\in\C
	\]
	is holomorphic in a neighborhood $\cU\subset\cN_\C$ of zero.
	\item[(A2)] For any nonempty open subset $\cU\subset\cN_\C'$ it holds that $\mu_\beta(\cU)> 0$.
\end{enumerate}

We introduce the space of smooth polynomials on $\cN'$, denoted by $\cP(\cN')$ and consisting of finite linear combinations of functions of the form $\lab\cdot,\xi\rab^n$, where $\xi\in\cN_\C$ and $n\in\N$. Every smooth polynomial $\varphi$ has a representation
\[ \varphi(\omega)=\sum_{n=0}^N\lab\omega^\otn,\varphi^{(n)}\rab, \]
where $N\in\N$ and $\varphi^{(n)}$ is a finite sum of elements of the form $\xi^\otn$, $\xi\in\cN_\C$. We equip $\cP(\cN')$ with the natural topology such that the mapping
\[
	\varphi = \sum_{n=0}^\infty \lab \cdot^\otn,\varphi^{(n)}\rab \leftrightarrow \vec{\varphi}=\lcb \varphi^{(n)} : n\in\N \rcb
\]
becomes a topological isomorphism from $\cP(\cN')$ to the topological direct sum of tensor powers $\cN^\wotn_\C$, i.e.
\[
	\cP(\cN') \simeq \bigoplus_{n=0}^\infty \cN^\wotn_\C
\]
(note that $\varphi^{(n)}\neq 0$ only for finitely many $n\in\N$).
Then we introduce the space $\cP'_{\mu_\beta}(\cN')$ as the dual space of $\cP(\cN')$ with respect to $L^2(\mu_\beta)$, i.e.
\[
	\cP(\cN') \subset L^2(\mu_\beta) \subset \cP'_{\mu_\beta}(\cN')
\]
and the dual pairing $\ddp{\cdot}{\cdot}_{\mu_\beta}$ between $\cP'_{\mu_\beta}(\cN')$ and $\cP(\cN')$ is a bilinear extension of the scalar product on $L^2(\mu_\beta)$ by
\[
	\ddp{f}{\varphi}_{\mu_\beta} = (f,\overline{\varphi})_{L^2(\mu_\beta)},\quad\varphi\in\cP(\cN'),f\in L^2(\mu_\beta).
\]
Note that (A1) ensures that $\cP(\cN')\subset L^2(\mu_\beta)$ is dense, see \cite{Sko74}. Then it is possible to construct an Appell System $\A^{\mu_\beta} = (\P^{\mu_\beta},\Q^{\mu_\beta})$ generated by the measure $\mu_\beta$. We will only give the main definitions, for further details and proofs we refer to \cite{GJRS14, KSWY98}. 

For each $n\in\N$ and $z\in\cN'_\C$ we construct $P_n^{\mu_\beta}(z)\in\lb\cN^{\hat{\otimes}n}_\C\rb'$ such that the Appell polynomials
\[
	\P^{\mu_\beta} = \lcb \lab P^{\mu_\beta}_n(\cdot),\varphi^{(n)} \rab \; \Big| \; \varphi^{(n)}\in\cN^{\hat{\otimes} n}_\C,\;n\in\N\rcb,
\]
give a representation for $\varphi\in\cP(\cN')$ by
\begin{equation}\label{chaosdecompI}
	\varphi = \sum_{n=0}^N \lab P^{\mu_\beta}_n(\cdot),\varphi^{(n)} \rab 
\end{equation}
for suitable $N\in\N$ and $\varphi^{(n)}\in\cN^{\hat{\otimes} n}_\C$. Every distribution $\Phi\in\cP'_{\mu_\beta}(\cN')$ is of the form 
\begin{equation}\label{chaosdecompII}
	\Phi=\sum_{n=0}^\infty Q^{\mu_\beta}_n(\Phi^{(n)}),
\end{equation}
where $Q^{\mu_\beta}_n(\Phi^{(n)})$ is from the $\Q^{\mu_\beta}$-system
\[
	\Q^{\mu_\beta} = \lcb Q^{\mu_\beta}_n(\Phi^{(n)}) \; \Big| \; \Phi^{(n)}\in\lb\cN^{\hat{\otimes}n}_\C\rb',\;n\in\N\rcb.
\]
Furthermore the dual pairing of a distribution and a test function is given by 
\begin{equation}\label{eq:biorth}
	\ddp{Q^{\mu_\beta}_n\lb\Phi^{(n)}\rb}{\lab P^{\mu_\beta}_m,\varphi^{(m)}\rab}_{\mu_\beta} = \delta_{m,n} n! \lab \Phi^{(n)},\varphi^{(n)}\rab,\quad n,m\in\N_0,
\end{equation}
for $\Phi^{(n)}\in\lb\cN^{\hat{\otimes}n}_\C\rb'$ and $\varphi^{(m)}\in\cN^{\hat{\otimes}m}_\C$. With the help of the Appell system $\A^{\mu_\beta}$ a test function and a distribution space can now be constructed, see \cite{KSWY98}, which results in the following chain of spaces
\[
	(\cN)^1_{\mu_\beta} \subset (\cH_p)^1_{q,\mu_\beta} \subset L^2(\mu_\beta) \subset (\cH_{-p})^{-1}_{-q,\mu_\beta} \subset (\cN)^{-1}_{\mu_\beta}\quad p,q\in\N.
\]
Here, $\lb\cH_p\rb^1_{q,\mu_\beta}$ denotes the completion of $\cP(\cN')$ with respect to $\norm{\cdot}_{p,q,\mu_\beta}$, given by
\[
	\norm{\varphi}^2_{p,q,\mu_\beta} := \sum_{n=0}^N (n!)^2 2^{nq}\abs{\varphi^{(n)}}_p^2,\quad p,q\in\N_0, \varphi\in\cP(\cN').
\]
There are $p',q'>0$ such that $\lb\cH_p\rb^1_{q,\mu_\beta}$ is topologically embedded in $L^2(\mu_\beta)$ for all $p>p',q>q'$, see \cite{KK99}. Moreover, the set of $\mu_\beta$-exponentials $\lcb e_{\mu_\beta}(\theta;\cdot) := \tfrac{\e^{\lab\cdot,\theta\rab}}{l_{\mu_\beta}(\theta)} \; \big| \; \theta\in U_{p,q} \rcb$ is total in $(\cH_p)_{q,\mu_\beta}^1$. Here the neighborhood $U_{p,q}\subset \cN_\C$ of zero is defined by $U_{p,q} := \lcb \theta\in\cN_\C \mid \abs{\theta}_p < 2^{-q}\rcb$. Since the $\mu_\beta$-exponentials have the series expansion $e_{\mu_\beta}(\theta,\cdot) = \sum_{n=0}^\infty 1/(n!)\lab P^{\mu_\beta}_n(\cdot),\theta^\otn\rab$ it holds that for each $\theta\in U_{p,2q}$ the norm is given by
\begin{equation}\label{eq:muexpNorm}
	\norm{e_{\mu_\beta}(\theta,\cdot)}_{p,q,\mu_\beta}^2 = \sum_{n=0}^\infty 2^{-nq} = (1-2^{-q})^{-1}.
\end{equation}

By $(\cH_{-p})^{-1}_{-q,\mu_\beta}$ we denote the set of all $\Phi\in\cP'_{\mu_\beta}(\cN')$ for which $\norm{\Phi}_{-p,-q,\mu_\beta}$ is finite, where 
\[
	\norm{\Phi}^2_{-p,-q,\mu} := \sum_{n=0}^\infty 2^{-qn}\abs{\Phi^{(n)}}^2_{-p},\quad p,q\in\N_0.
\]
It holds that $(\cH_{-p})^{-1}_{-q,\mu_\beta}$ is the dual of $(\cH_p)^1_{q,\mu_\beta}$ and the test function space $\lb\cN\rb^1_{\mu_\beta}$ is defined as the projective limit of $\lb\cH_p\rb^1_{q,\mu_\beta}$. This is a nuclear space which is continuously embedded in $L^2(\mu_\beta)$. Moreover it turns out that the test function space $(\cN)^1_{\mu_\beta}$ is the same for all measures $\mu_\beta$ satisfying (A1) and (A2), thus we will just use the notation $(\cN)^1$. The space of distributions $(\cN)_{\mu_\beta}^{-1}$ is the inductive limit of $(\cH_{-p})^{-1}_{-q,\mu_\beta}$ and is the dual of $(\cN)^1$ with respect to $L^2(\mu_\beta)$ and the dual pairing between a distribution $\Phi\in(\cN)_{\mu_\beta}^{-1}$ as in \eqref{chaosdecompII} with a test function $\varphi\in(\cN)^1$ as in \eqref{chaosdecompI} is given by 
	\[
		\ddp{\Phi}{\varphi}_{\mu_\beta} = \sum_{n=0}^\infty n! \lab\Phi^{(n)},\varphi^{(n)}\rab.
	\]
We shall use the same notation for the dual pairing between $(\cH_{-p})^{-1}_{-q,\mu_\beta}$ and $(\cH_p)^1_{q,\mu_\beta}$.

As in \cite{GJRS14} we introduce the $S_{\mu_\beta}$-transform of $\Phi\in (\cN)^{-1}_{\mu_\beta}$ on a suitable neighborhood $\cU\subset\cN_\C$ of zero by
\[
	S_{\mu_\beta}\Phi(\theta) = \ddp{\Phi}{e_{\mu_\beta}(\theta,\cdot)}_{\mu_\beta} = \frac{1}{\rmE_\beta\lb\halb\lab\theta,\theta\rab\rb} \ddp{\Phi}{\e^{\lab\cdot,\theta\rab}}_{\mu_\beta}, \quad\theta\in\cU.
\]
Furthermore we define the $T_{\mu_\beta}$-transform of $\Phi\in (\cN)^{-1}_{\mu_\beta}$ at $\theta\in\cU$ by
\[
	T_{\mu_\beta}\Phi(\theta) = \ddp{\Phi}{\exp(i\lab\cdot,\theta\rab}_{\mu_\beta}.
\]
For $\Phi\in(\cN)^{-1}_{\mu_\beta}$ as in \eqref{chaosdecompII} we have
\begin{equation}\label{stransform}
	S_{\mu_\beta}\Phi(\theta) = \sum_{n=0}^\infty \lab\Phi^{(n)},\theta^{\otimes n}\rab, \quad\theta\in\cU.
\end{equation}
The relation between $T_{\mu_\beta}$- and $S_{\mu_\beta}$-transform is given by, see \cite{GJRS14}
\begin{equation}\label{s-t-transform}
	T_{\mu_\beta}\Phi(\theta) = l_{\mu_\beta}(i\theta) S_{\mu_\beta} \Phi(i\theta), \quad\theta\in\cU.
\end{equation}

The space $(\cN)^{-1}_{\mu_\beta}$ can be characterized via the $S_{\mu_\beta}$-transform using spaces of holomorphic functions on $\cN_\C$. By $\Hol$ we denote the space of all holomorphic functions at zero. Let $F$ and $G$ be holomorphic on a neighborhood $\cV,\cU\subset\cN_\C$ of zero, respectively. We identify $F$ and $G$ if there is a neighborhood $\cW\subset\cV$ and $\cW\subset\cU$ such that $F(\theta) = G(\theta)$ for all $\theta\in\cW$. $\Hol$ is the union of the spaces
\[
	\lcb F\in\Hol \;\Big|\; n_{p,l,\infty}(F) = \sup_{\abs{\theta}_p\leq2^{-l}} \abs{F(\theta)} <\infty\rcb,\quad p,l\in\N
\]
and carries the inductive limit topology.

The following theorem is proven in \cite{KSWY98}:
\begin{theorem}\label{characterization}
The $S_{\mu_\beta}$-transform is a topological isomorphism from $(\cN)^{-1}_{\mu_\beta}$ to $\Hol$. Moreover, if $F\in\Hol,\;F(\theta) = \sum_{n=0}^\infty \lab\Phi^{(n)},\theta^\otn\rab$ for all $\theta\in\cN_\C$ with $\abs{\theta}_p\leq 2^{-l}$ and if $p'>p$ with $\norm{i_{p',p}}_{HS}<\infty$ and $q\in\N$ such that $\rho := 2^{2l-q}\e^2\norm{i_{p',p}}_{HS}^2<1$ then $\Phi = \sum_{n=0}^\infty Q^{\mu_\beta}_n(\Phi^{(n)})\in\lb\cH_{-p'}\rb^{-1}_{-q}$ and
\[
	\norm{\Phi}_{-p',q,{\mu_\beta}} \leq n_{p,l,\infty}(F) (1-\rho)^{-1/2}.
\]
\end{theorem}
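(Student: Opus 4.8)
The plan is to show that $S_{\mu_\beta}$ is a continuous bijection onto $\Hol$ with continuous inverse, arguing on the Hilbert spaces $(\cH_{-p})^{-1}_{-q,\mu_\beta}$ constituting the inductive limit, and to extract the quantitative bound directly from the surjectivity argument. By \eqref{stransform} we have $S_{\mu_\beta}\Phi(\theta) = \sum_{n=0}^\infty \lab\Phi^{(n)},\theta^{\otimes n}\rab$, so the whole statement reduces to understanding how the kernels $\Phi^{(n)}$ and the holomorphic germ $F = S_{\mu_\beta}\Phi$ determine one another.

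First I would treat the easy direction. Given $\Phi\in(\cH_{-p})^{-1}_{-q,\mu_\beta}$, so that $\sum_n 2^{-qn}\abs{\Phi^{(n)}}_{-p}^2<\infty$, I estimate $\abs{\lab\Phi^{(n)},\theta^{\otimes n}\rab}\le\abs{\Phi^{(n)}}_{-p}\abs{\theta}_p^n$ and apply Cauchy--Schwarz to the series $\sum_n\abs{\Phi^{(n)}}_{-p}\abs{\theta}_p^n$. This shows absolute and locally uniform convergence as soon as $2^q\abs{\theta}_p^2<1$, so $S_{\mu_\beta}\Phi$ is holomorphic near zero, lies in $\Hol$, and the bound is uniform on each Hilbert space of the inductive limit, giving continuity of $S_{\mu_\beta}$. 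Injectivity is then immediate: the homogeneous parts of a holomorphic germ are its Taylor coefficients, these recover the symmetric kernels $\Phi^{(n)}$ uniquely by polarization, and the representation \eqref{chaosdecompII} is itself unique.

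The substantial part is surjectivity together with the norm estimate. Given $F\in\Hol$ bounded by $C:=n_{p,l,\infty}(F)$ on $\abs{\theta}_p\le2^{-l}$, I expand $F$ in its Taylor series at zero, read off the symmetric kernels $\Phi^{(n)}\in(\cN_\C^{\hat{\otimes} n})'$ as the $n$-th coefficients, and set $\Phi:=\sum_n Q^{\mu_\beta}_n(\Phi^{(n)})$; by \eqref{stransform} this candidate automatically satisfies $S_{\mu_\beta}\Phi=F$ once the series converges in some $(\cH_{-p'})^{-1}_{-q,\mu_\beta}$. The crux is the estimate
\[
	\abs{\Phi^{(n)}}_{-p'} \le C\,\lb 2^l\,\e\,\norm{i_{p',p}}_{HS}\rb^n,
\]
which I assemble in three moves: a Cauchy inequality applied to $\lambda\mapsto F(\lambda\theta)$ yields $\abs{\lab\Phi^{(n)},\theta^{\otimes n}\rab}\le C\,(2^l\abs{\theta}_p)^n$ for the homogeneous diagonal; polarization promotes this diagonal bound to the full symmetric multilinear operator norm on $\cH_p$ at the cost of the factor $n^n/n!\le\e^n$; and passing from that operator norm to the genuine Hilbert space norm $\abs{\cdot}_{-p'}$ over the finer space $\cH_{p'}$ contributes the factor $\norm{i_{p',p}}_{HS}^n$, since for an orthonormal basis $(e_k)$ of $\cH_{p'}$ one has $\abs{\Phi^{(n)}}_{-p'}^2\le\norm{\Phi^{(n)}}_{op,p}^2\lb\sum_k\abs{e_k}_p^2\rb^n$. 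Summing the squares then gives $\norm{\Phi}_{-p',-q,\mu_\beta}^2=\sum_n 2^{-qn}\abs{\Phi^{(n)}}_{-p'}^2\le C^2\sum_n\rho^n=C^2(1-\rho)^{-1}$ with $\rho=2^{2l-q}\e^2\norm{i_{p',p}}_{HS}^2$, which is exactly the asserted bound and forces the smallness condition $\rho<1$ through the choice of $q$.

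I expect the main obstacle to be this last move, the transition from the sup/operator norm controlled by the Cauchy estimates to the true Hilbert space norm $\abs{\cdot}_{-p'}$. This is precisely where nuclearity of the triple is essential: the embedding $\cH_{p'}\subset\cH_p$ must be Hilbert--Schmidt so that $\cH_{p'}^{\hat{\otimes} n}\subset\cH_p^{\hat{\otimes} n}$ has Hilbert--Schmidt norm $\norm{i_{p',p}}_{HS}^n$, and it is the interplay of the polarization constant $\e^n$ with the geometric factor $2^{2l-q}$ that renders the series summable under $\rho<1$. Finally, combining the two continuity estimates of the forward and backward directions upgrades the bijection to a topological isomorphism of the inductive limits, completing the proof.
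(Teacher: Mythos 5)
Your proposal is correct and takes essentially the same route as the paper, which gives no proof of its own but refers to \cite{KSWY98}: the standard argument there is exactly your chain of a Cauchy estimate on $\lambda\mapsto F(\lambda\theta)$ giving $\abs{\lab\Phi^{(n)},\theta^{\otimes n}\rab}\le n_{p,l,\infty}(F)(2^l\abs{\theta}_p)^n$, polarization at the cost of $n^n/n!\le\e^n$, the Hilbert--Schmidt factor $\norm{i_{p',p}}_{HS}^n$ from the nuclear embedding, and geometric summation yielding the bound $n_{p,l,\infty}(F)(1-\rho)^{-1/2}$, together with the Cauchy--Schwarz estimate for the forward direction. The details you supply (the ONB computation for the passage from operator norm to $\abs{\cdot}_{-p'}$ and the step-wise continuity on the inductive limits) are accurate, so there is nothing to flag.
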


As a corollary from the characterization theorem \cite{GJRS14} proves a result which describes the integrable mappings (in a weak sense) with values in $(\cN)^{-1}_{\mu_\beta}$: 

\begin{theorem}\label{charint}
Let $(T,\cB,\nu)$ be a measure space and $\Phi_t\in (\cN)^{-1}_{\mu_\beta}$ for all $t\in T$. Let $\cU\subset\cN_\C$ be an appropriate neighborhood of zero and $C<\infty$ such that:
\begin{enumerate}[label=\textit{(\roman*)}]
	\item $S_{\mu_\beta}\Phi_\cdot(\theta)\colon T\to\C$ is measurable for all $\theta\in\cU$.
	\item $\int_T \abs{S_{\mu_\beta}\Phi_t(\theta)}\,\rmd\nu(t) \leq C$ for all $\theta\in\cU$.
\end{enumerate}
Then there exists $\Psi\in (\cN)^{-1}_{\mu_\beta}$ such that for all $\theta\in\cU$
\[
	S_{\mu_\beta}\Psi(\theta) = \int_T S_{\mu_\beta}\Phi_t(\theta)\,\rmd\nu(t).
\]
We denote $\Psi$ by $\int_T \Phi_t\,\rmd\nu(t)$ and call it the weak integral of $\Phi$.
\end{theorem}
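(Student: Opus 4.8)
The plan is to realize the weak integral through its $S_{\mu_\beta}$-transform and then invoke the characterization Theorem~\ref{characterization}. I would set
\[
	F(\theta) := \int_T S_{\mu_\beta}\Phi_t(\theta)\,\rmd\nu(t), \qquad \theta\in\cU,
\]
which is well defined for each such $\theta$ by assumption (ii). The entire task then reduces to showing that $F\in\Hol$: once this is known, Theorem~\ref{characterization} produces a unique $\Psi\in(\cN)^{-1}_{\mu_\beta}$ with $S_{\mu_\beta}\Psi = F$ on a neighborhood of zero, and declaring $\Psi =: \int_T\Phi_t\,\rmd\nu(t)$ finishes the proof. By the standard criterion for holomorphy on $\cN_\C$, it suffices to verify that $F$ is locally bounded near zero and G\^ateaux-holomorphic (G-holomorphic), so these are the two properties I would establish.

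Local boundedness is immediate. Shrinking $\cU$ if necessary to a neighborhood of the form $\{\theta\in\cN_\C : \abs{\theta}_p < 2^{-l}\}$, assumption (ii) gives $\abs{F(\theta)}\le\int_T\abs{S_{\mu_\beta}\Phi_t(\theta)}\,\rmd\nu(t)\le C$ for all $\theta\in\cU$, hence $n_{p,l,\infty}(F)\le C<\infty$.

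For G-holomorphy I would fix $\theta_0\in\cU$ and $\eta\in\cN_\C$ and pick $r>0$ with $\theta_0+z\eta\in\cU$ for all $\abs{z}\le r$. Since each $S_{\mu_\beta}\Phi_t\in\Hol$, the map $z\mapsto S_{\mu_\beta}\Phi_t(\theta_0+z\eta)$ is holomorphic on $\{\abs{z}<r\}$, so for $\abs{\lambda}<r$ Cauchy's formula gives $S_{\mu_\beta}\Phi_t(\theta_0+\lambda\eta)=\frac{1}{2\pi i}\oint_{\abs{z}=r}\frac{S_{\mu_\beta}\Phi_t(\theta_0+z\eta)}{z-\lambda}\,\rmd z$. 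Integrating over $T$ and interchanging the two integrals I obtain the Cauchy representation
\[
	F(\theta_0+\lambda\eta) = \frac{1}{2\pi i}\oint_{\abs{z}=r}\frac{F(\theta_0+z\eta)}{z-\lambda}\,\rmd z, \qquad \abs{\lambda}<r,
\]
whose right-hand side is manifestly holomorphic in $\lambda$ on $\{\abs{\lambda}<r\}$, the boundary function $z\mapsto F(\theta_0+z\eta)$ being bounded by $C$ and measurable. This proves G-holomorphy, hence $F\in\Hol$.

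The main obstacle is justifying the interchange of $\int_T$ and $\oint_{\abs{z}=r}$ above. Integrability is exactly what (ii) delivers: on $\abs{z}=r$ one has $\theta_0+z\eta\in\cU$, so $\oint_{\abs{z}=r}\int_T\abs{S_{\mu_\beta}\Phi_t(\theta_0+z\eta)}\,\rmd\nu(t)\,\abs{\rmd z}\le 2\pi r\,C<\infty$, and division by $\abs{z-\lambda}\ge r-\abs{\lambda}>0$ preserves finiteness. For Fubini I also need joint measurability of $(t,z)\mapsto S_{\mu_\beta}\Phi_t(\theta_0+z\eta)$ on $T\times\{\abs{z}=r\}$; this follows from (i) (measurability in $t$) together with continuity in $z$ (holomorphy of each $S_{\mu_\beta}\Phi_t$), since a function measurable in one variable and continuous in the other is jointly measurable. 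With $F\in\Hol$ and $n_{p,l,\infty}(F)\le C$ in hand, Theorem~\ref{characterization} completes the argument, yields $S_{\mu_\beta}\Psi(\theta)=\int_T S_{\mu_\beta}\Phi_t(\theta)\,\rmd\nu(t)$ on $\cU$ by analytic continuation, and moreover locates $\Psi$ in some $(\cH_{-p'})^{-1}_{-q,\mu_\beta}$.
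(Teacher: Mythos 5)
Your proof is correct and takes essentially the same route as the paper, which obtains Theorem \ref{charint} as a corollary of the characterization Theorem \ref{characterization} (with details as in \cite{GJRS14}): define $F(\theta)=\int_T S_{\mu_\beta}\Phi_t(\theta)\,\rmd\nu(t)$, get local boundedness from (ii), establish G\^ateaux holomorphy by a Cauchy-formula (or Morera) argument with a Fubini interchange, and then invoke the injectivity and surjectivity of the $S_{\mu_\beta}$-transform onto $\Hol$. Your justification of the interchange --- joint measurability from (i) plus continuity in $z$, and absolute integrability from (ii) --- is exactly the standard argument.
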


Another consequence of Theorem \ref{characterization} characterizes the convergent sequences in $(\cN)^{-1}_{\mu_\beta}$:
\begin{theorem}\label{charconv}
Let $(\Phi_n)_{n\in\N}$ be a sequence in $(\cN)^{-1}_{\mu_\beta}$. Then $(\Phi_n)_{n\in\N}$ converges strongly in $(\cN)^{-1}_{\mu_\beta}$ if and only if there exist $p,q\in\N$ with the following two properties:
\begin{enumerate}[label=\textit{(\roman*)}]
\item $\lb S_{\mu_\beta}\Phi_n(\theta)\rb_{n\in\N}$ is a Cauchy sequence for all $\theta\in U_{p,q}$.
\item $S_{\mu_\beta}\Phi_n$ is holomorphic on $U_{p,q}$ and there is a constant $C>0$ such that \[\abs{S_{\mu_\beta}\Phi_n(\theta)}\leq C\] for all $\theta\in U_{p,q}$ and for all $n\in\N$.
\end{enumerate}
\end{theorem}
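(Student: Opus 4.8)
The plan is to transport the statement through the topological isomorphism $S_{\mu_\beta}\colon (\cN)^{-1}_{\mu_\beta}\to\Hol$ of Theorem \ref{characterization}, using that convergence in the inductive-limit space $(\cN)^{-1}_{\mu_\beta}$ is controlled, step by step, by the Hilbert norms $\norm{\cdot}_{-p,-q,\mu_\beta}$ together with the quantitative estimate recorded there. Throughout I identify $S_{\mu_\beta}\Phi_n(\theta)=\ddp{\Phi_n}{e_{\mu_\beta}(\theta,\cdot)}_{\mu_\beta}$ and expand it as in \eqref{stransform}.

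For necessity, suppose $\Phi_n\to\Phi$ strongly. A convergent sequence is bounded, and since $(\cN)^1$ is nuclear Fréchet its dual $(\cN)^{-1}_{\mu_\beta}$ is a regular inductive limit of the Hilbert spaces $(\cH_{-p})^{-1}_{-q,\mu_\beta}$ (the linking maps being Hilbert--Schmidt); hence the whole sequence, together with $\Phi$, lies in one step $(\cH_{-p})^{-1}_{-q,\mu_\beta}$ and is bounded there, say $\sup_n\norm{\Phi_n}_{-p,-q,\mu_\beta}=:M<\infty$. A Cauchy--Schwarz estimate on \eqref{stransform} with the weights $2^{-qk/2}$, namely $\abs{S_{\mu_\beta}\Phi_n(\theta)}\le\sum_k\abs{\Phi_n^{(k)}}_{-p}\abs{\theta}_p^k\le M\bigl(1-2^{q}\abs{\theta}_p^2\bigr)^{-1/2}$, then shows that for $q'$ large enough the $S_{\mu_\beta}\Phi_n$ are holomorphic and uniformly bounded on $U_{p,q'}$, which is (ii). Property (i) is immediate, since strong convergence implies weak convergence and therefore $S_{\mu_\beta}\Phi_n(\theta)=\ddp{\Phi_n}{e_{\mu_\beta}(\theta,\cdot)}_{\mu_\beta}\to\ddp{\Phi}{e_{\mu_\beta}(\theta,\cdot)}_{\mu_\beta}$ for every $\theta$ in the neighborhood of definition, so the scalar sequences are Cauchy.

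For sufficiency, assume (i) and (ii) on $U_{p,q}$. Applying the estimate of Theorem \ref{characterization} to each $F_n:=S_{\mu_\beta}\Phi_n$ (choosing $l$ so that $\{\abs{\theta}_p\le 2^{-l}\}\subset U_{p,q}$, and $p'>p$, $q''$ with $\rho<1$) gives $\sup_n\norm{\Phi_n}_{-p',-q'',\mu_\beta}\le C(1-\rho)^{-1/2}$, so $(\Phi_n)$ is bounded in the Hilbert space $(\cH_{-p'})^{-1}_{-q'',\mu_\beta}$. By (i) the pointwise limit $F(\theta):=\lim_n F_n(\theta)$ exists on $U_{p,q}$; since $(\Phi_n)$ is norm-bounded and the $\mu_\beta$-exponentials $e_{\mu_\beta}(\theta,\cdot)$ form a total subset of the predual $(\cH_{p'})^1_{q'',\mu_\beta}$, the sequence converges weakly in $(\cH_{-p'})^{-1}_{-q'',\mu_\beta}$, hence weakly in $(\cN)^{-1}_{\mu_\beta}$, to a distribution $\Phi$ with $S_{\mu_\beta}\Phi=F$. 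It remains to upgrade weak convergence to strong convergence, and this is where the functional-analytic structure does the work: $(\cN)^1$ is nuclear Fréchet, hence Montel, so its strong dual $(\cN)^{-1}_{\mu_\beta}$ is Montel as well, and in a Montel space every weakly convergent sequence converges in the strong topology. Thus $\Phi_n\to\Phi$ strongly.

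The main obstacle is exactly this topological step, together with its counterpart in the necessity proof: one must know that $(\cN)^{-1}_{\mu_\beta}$ is a regular inductive limit (so a bounded sequence is captured in a single Hilbert step) and that it is Montel (so weak convergence forces strong convergence), both of which rest on the Hilbert--Schmidt/nuclear structure of the embeddings. If one prefers an explicit rate to the Montel argument, the upgrade can be made quantitative: a Vitali-type theorem shows that the uniformly bounded, pointwise convergent holomorphic functions $F_n$ converge uniformly on a smaller ball $\{\abs{\theta}_p\le 2^{-l'}\}$ — the delicate point being to pass from pointwise convergence plus uniform boundedness to this $\sup$-convergence, e.g.\ by verifying convergence of the homogeneous Taylor parts and using a Hilbert--Schmidt embedding to turn their weak convergence into norm convergence — after which the estimate of Theorem \ref{characterization} applied to $F_n-F$ yields $\norm{\Phi_n-\Phi}_{-p'',-q''',\mu_\beta}\to 0$ directly.
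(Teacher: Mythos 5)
Your proof is correct and follows essentially the same route as the paper: in both directions you use the bound from Theorem \ref{characterization}, the totality of the $\mu_\beta$-exponentials in $(\cH_{p'})^1_{q'',\mu_\beta}$ to identify a weak limit, and the coincidence of weak and strong sequential convergence in the dual of a nuclear space (your Montel argument is the same fact the paper invokes via Remark \ref{Rem:Nprop}), while for necessity both arguments localize the convergent sequence in one Hilbert step and bound $\abs{S_{\mu_\beta}\Phi_n(\theta)}$ by $\norm{\Phi_n}_{-p,-q,\mu_\beta}$ times a geometric series, your weighted Cauchy--Schwarz on \eqref{stransform} being the paper's estimate \eqref{eq:muexpNorm} in disguise.
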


\begin{proof}
First assume that $(i)$ and $(ii)$ hold. From \eqref{stransform} and by Theorem \ref{characterization} it follows that there exist $p',q'\in\N$ such that
\begin{align}\label{eq:normbound}
	\norm{\Phi_n}_{-p',-q',{\mu_\beta}} &\leq n_{p,q,\infty}(S_{\mu_\beta}\Phi_n)(1-\rho)^{-1/2} \leq C(1-\rho)^{-1/2}.
\end{align}
Because of \textit{(i)} together with \eqref{eq:normbound} and since the ${\mu_\beta}$-exponentials are a total set in $(\cH_{p'})_{q'}^1$ we conclude that the sequence $\lb\ddp{\Phi_n}{\varphi}_{\mu_\beta}\rb_{n\in\N}$ is a Cauchy sequence for all $\varphi\in(\cH_{p'})_{q'}^1$ by approximating $\varphi$ with $\mu_\beta$-exponentials. Indeed, for $\varepsilon>0$ choose $e$ from the linear span of the $\mu_\beta$-exponentials such that $\norm{\varphi-e}_{p',q',\mu_\beta} < 1/(4C)(1-\rho)^{1/2} \varepsilon$ and choose $m,n$ large enough such that $\abs{\ddp{\Phi_m-\Phi_n}{e}} <\varepsilon/2$. Then
\[
	\abs{\ddp{\Phi_n-\Phi_m}{\varphi}_{\mu_\beta}} \leq \abs{\ddp{\Phi_n-\Phi_m}{e}} + \norm{\Phi_n+\Phi_m}_{-p',-q',\mu_\beta} \norm{\varphi-e}_{p',q',\mu_\beta} <\varepsilon.
\]
Thus the mapping $\Phi\colon (\cH_{p'})^1_{q',\mu_\beta} \to \C$, $\Phi(\varphi) := \lim_{n\to\infty} \ddp{\Phi_n}{\varphi}$, $\varphi\in(\cH_{p'})_{q'}^1$ is well-defined, linear and continuous since 
\[
	\abs{\Phi(\varphi)} \leq \liminf_{n\to\infty} \norm{\Phi_n}_{-p',-q',\mu_\beta} \norm{\varphi}_{p',q',\mu_\beta} \leq C(1-\rho)^{-1/2} \norm{\varphi}_{p',q',\mu_\beta}.
\] 
Hence $\Phi\in (\cH_{-p'})^{-1}_{-q',\mu_\beta}$. This shows that $(\Phi_n)_{n\in\N}$ converges weakly to $\Phi$ in $(\cN)^{-1}_{\mu_\beta}$ since $(\cN)^1$ is reflexive. Finally we use that in the dual space of a nuclear space strong and weak convergence coincide, see Remark \ref{Rem:Nprop}. Hence $\lb\Phi_n\rb_{n\in\N}$ converges strongly to $\Phi$ in $(\cN)^{-1}_{\mu_\beta}$.

Conversely let $\lb\Phi_n\rb_{n\in\N}$ converge strongly to some $\Phi\in(\cN)^{-1}_{\mu_\beta}$. Strong convergence implies that there exist $p,q\in\N$ such that $\Phi_n\to\Phi$ in $(\cH_{-p})^{-1}_{-q,\mu_\beta}$ as $n\to\infty$. Thus $(i)$ is obviously fulfilled for all $\theta\in U_{p,q}$. The strong convergence also implies that there exists $K<\infty$ such that $\sup_{n\in\N} \norm{\Phi_n}_{-p,-q,\mu_\beta} <K$. Thus we have for all $\theta\in U_{p,2q}$ and for all $n\in\N$ that 
\[
	\abs{(S_{\mu_\beta}\Phi_n)(\theta)} \leq \norm{\Phi_n}_{-p,-q,\mu_\beta} \norm{e_{\mu_\beta}(\theta;\cdot)}_{p,q,\mu_\beta} \leq K(1-2^{-q})^{-1/2} <\infty,
\]
see \eqref{eq:muexpNorm}. This shows $(ii)$.
\end{proof}

\begin{remark}
Because of \eqref{s-t-transform} $T_{\mu_\beta}\Phi$ is holomorphic if and only $S_{\mu_\beta}\Phi$ is holomorphic. Thus the characterization theorems \ref{characterization}, \ref{charint} and \ref{charconv} also hold if the $S_{\mu_\beta}$-transform is replaced by the $T_{\mu_\beta}$-transform.
\end{remark}

The characterization theorems have already successfully been applied in Mittag-Leffler analysis. A generalization of Donsker's delta from Gaussian analysis could be constructed in $(\cN)^{-1}_{\mu_\beta}$ via Theorem \ref{charint} and its $T_{\mu_\beta}$-transform is calculated, see \cite{GJRS14}. In fact, for $\eta\in\cN$ and $a\in\R$ Donsker's delta is defined as a weak integral in the sense of Theorem \ref{charint} by
\begin{equation}\label{eq:DonskersDelta}
	\delta_a(\lab\cdot,\eta\rab) = \frac{1}{2\pi}\int_\R \exp\lb ix(\lab\cdot,\eta\rab-a) \rb\,\rmd x \in \lb\cN\rb^{-1}_{\mu_\beta}.
\end{equation}
We prove in the following theorem that the weak integral in \eqref{eq:DonskersDelta} can be approximate by a sequence of Bochner integrals from $L^2(\mu_\beta)$. Thus Donsker's delta can be represented as a limit of square integrable functions.

\begin{theorem}\label{Theo:Approxdonsker}
For $\eta\in\cH$ and $a\in\R$ it holds that
\[
	\delta_a(\lab\cdot,\eta\rab) = \lim_{n\to\infty} \frac{1}{2\pi} \int_{-n}^n \e^{ix(\lab\cdot,\eta\rab-a)}\,\rmd x \text{ in }(\cN)^{-1}_{\mu_\beta}.
\]
\end{theorem}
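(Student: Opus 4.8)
The plan is to apply the characterization of convergent sequences, Theorem~\ref{charconv}, in its $T_{\mu_\beta}$-transform formulation (which is legitimate by the Remark following that theorem). Write $\Phi_n := \tfrac{1}{2\pi}\int_{-n}^n \e^{ix(\lab\cdot,\eta\rab - a)}\,\rmd x$. Since $\abs{\e^{ix(\lab\omega,\eta\rab-a)}} = 1$ for $\mu_\beta$-a.e.\ $\omega$, the integrand $x\mapsto \e^{ix(\lab\cdot,\eta\rab-a)}$ is a bounded and $L^2(\mu_\beta)$-continuous map, hence Bochner integrable over $[-n,n]$, so each $\Phi_n \in L^2(\mu_\beta) \subset (\cN)^{-1}_{\mu_\beta}$ (for $\eta\in\cH$ we use Remark~\ref{remark:extension} to make sense of $\lab\cdot,\eta\rab$). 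Because for $\theta$ in a suitable neighborhood of zero the map $\Phi\mapsto T_{\mu_\beta}\Phi(\theta)$ is a continuous linear functional on $L^2(\mu_\beta)$, it commutes with the Bochner integral, and using the characteristic function of $\mu_\beta$ together with (A1) one computes
\begin{equation}\label{eq:Ttruncated}
	T_{\mu_\beta}\Phi_n(\theta) = \frac{1}{2\pi}\int_{-n}^n \e^{-iax}\,\rmE_\beta\lb-\halb\lab x\eta+\theta,x\eta+\theta\rab\rb\,\rmd x.
\end{equation}

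The decisive analytic input is the asymptotic behaviour of the Mittag-Leffler function off the positive real axis: for $0<\beta<1$ one has $\rmE_\beta(z) = -\tfrac{1}{z\,\Gamma(1-\beta)} + O(\abs{z}^{-2})$ as $\abs{z}\to\infty$ with $\arg(z)$ bounded away from $0$. The argument in \eqref{eq:Ttruncated} is $z_x = -\halb\lb x^2\abs{\eta}^2 + 2x\lab\eta,\theta\rab + \lab\theta,\theta\rab\rb$; choosing $p,q\in\N$ so that $U_{p,q}$ is small enough, the terms $\lab\eta,\theta\rab$ and $\lab\theta,\theta\rab$ are bounded in terms of $\abs{\eta}$ and $1$, so that for $\eta\neq 0$ and large $\abs{x}$ the dominant real part $-\halb x^2\abs{\eta}^2$ forces $z_x\to\infty$ along directions clustering at the negative real axis. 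Consequently there is a constant $C$ (depending only on $p,q,\eta$) with $\abs{\rmE_\beta(z_x)} \leq C(1+x^2)^{-1}$ uniformly for all $\theta\in U_{p,q}$. This single estimate yields both hypotheses of Theorem~\ref{charconv}: the dominating function $C(1+x^2)^{-1}$ is integrable, so by dominated convergence $T_{\mu_\beta}\Phi_n(\theta)$ converges as $n\to\infty$ for every $\theta\in U_{p,q}$, which gives the Cauchy property~(i); and $\abs{T_{\mu_\beta}\Phi_n(\theta)} \leq \tfrac{1}{2\pi}\int_\R \abs{\rmE_\beta(z_x)}\,\rmd x \leq C'$ uniformly in $n$ and $\theta\in U_{p,q}$, which is the bound~(ii). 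Holomorphy of each $T_{\mu_\beta}\Phi_n$ on $U_{p,q}$ is clear, since $\Phi_n\in(\cN)^{-1}_{\mu_\beta}$.

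By Theorem~\ref{charconv} the sequence $(\Phi_n)_{n\in\N}$ therefore converges strongly to some $\Phi\in(\cN)^{-1}_{\mu_\beta}$. To identify $\Phi$, note that $T_{\mu_\beta}\Phi(\theta) = \lim_{n\to\infty} T_{\mu_\beta}\Phi_n(\theta) = \tfrac{1}{2\pi}\int_\R \e^{-iax}\rmE_\beta\lb-\halb\lab x\eta+\theta,x\eta+\theta\rab\rb\,\rmd x$, which by the defining property of the weak integral (Theorem~\ref{charint}) is precisely the $T_{\mu_\beta}$-transform of the weak integral \eqref{eq:DonskersDelta} defining $\delta_a(\lab\cdot,\eta\rab)$. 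Since the $T_{\mu_\beta}$-transform is injective by Theorem~\ref{characterization}, we conclude $\Phi = \delta_a(\lab\cdot,\eta\rab)$, as claimed.

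The main obstacle is the uniform bound~(ii): because $\theta\in\cN_\C$ is complex, the argument $z_x$ is complex, and one must control $\abs{\rmE_\beta(z_x)}$ uniformly over the whole neighborhood $U_{p,q}$ and over all $x\in\R$. This forces one to invoke the precise algebraic decay of $\rmE_\beta$ along rays approaching (but not equal to) the negative real axis, and to check that the linear-in-$x$ complex perturbation $-x\lab\eta,\theta\rab$ does not rotate $z_x$ out of the decay sector for large $\abs{x}$ — which is guaranteed by shrinking $U_{p,q}$ so that $\abs{\lab\eta,\theta\rab}$ is small relative to $\abs{\eta}^2$. A minor additional point is justifying the formula \eqref{eq:Ttruncated} for $\eta\in\cH$ rather than $\eta\in\cN$, which is handled by approximating $\eta$ in $\cH$ and passing to the limit via Remark~\ref{remark:extension}.
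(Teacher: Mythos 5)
Your proof is correct and follows essentially the same route as the paper's: both define $\Phi_n$ as the truncated Bochner integral in $L^2(\mu_\beta)$, compute $T_{\mu_\beta}\Phi_n(\theta)$ as the truncated integral of $\e^{-ixa}\rmE_\beta\bigl(-\halb x^2\lab\eta,\eta\rab-\halb\lab\theta,\theta\rab-x\lab\theta,\eta\rab\bigr)$, dominate the integrand uniformly for $\theta$ in a neighborhood of zero by an integrable function of $x$, and conclude by dominated convergence together with Theorem~\ref{charconv}. The one local difference: where the paper simply cites Proposition 5.2 of \cite{GJRS14} for the uniform bound $\int_\R\abs{\rmE_\beta(\cdots)}\,\rmd x<C$ on a neighborhood (obtained there via the subordination formula \eqref{eq:LaplaceMbeta}), you re-derive it from the sectorial asymptotics of $\rmE_\beta$, which is a legitimate self-contained substitute. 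Be aware, though, that your stated hypothesis ``$\arg(z)$ bounded away from $0$'' is too weak: for $\abs{\arg z}<\beta\pi/2$ one has $\rmE_\beta(z)\sim\beta^{-1}\exp(z^{1/\beta})$, so the algebraic decay $\rmE_\beta(z)=-\tfrac{1}{z\Gamma(1-\beta)}+O(\abs{z}^{-2})$ holds only for $\abs{\arg z}\geq\mu$ with some $\mu>\beta\pi/2$. This is a fixable imprecision rather than a gap, since in your application $\arg z_x\to\pi$ as $\abs{x}\to\infty$ once $U_{p,q}$ is shrunk so that $\abs{\lab\eta,\theta\rab}$ and $\abs{\lab\theta,\theta\rab}$ are small relative to $\abs{\eta}^2$ (which requires $\eta\neq 0$, an assumption the paper also makes implicitly through its citation of Proposition 5.2). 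Your explicit identification of the limit via Theorem~\ref{charint} and injectivity of the $T_{\mu_\beta}$-transform spells out a step the paper leaves implicit, and is sound.
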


\begin{proof}
Set $\Phi_n := (2\pi)^{-1} \int_{-n}^n \e^{ix(\lab\cdot,\eta\rab-a)}\,\rmd x$, $n\in\N$. First note that $\Phi_n\in L^2(\mu_\beta)$ as a Bochner integral since
\[
	\int_{-n}^n \norm{\exp\lb ix(\lab\cdot,\eta\rab-a)\rb}_{L^2(\mu_\beta)}\,\rmd x = \int_{-n}^n 1\,\rmd x =2n <\infty.
\] 
%From Proposition 5.6 and Proposition 5.2 in \cite{GJRS14} we know that 
%\[
%	T_{\mu_\beta}\e^{ix\lab\cdot,\eta\rab-a}(\theta) = \e^{-ixa} \rmE_\beta\Bigl(-\halb x^2\lab\eta,\eta\rab - \halb \lab\theta,\theta\rab - x\lab\theta,\eta\rab \Bigr) 
%\]
%and 
%\[
%	\int_{-n}^n \abs{T_{\mu_\beta}\e^{ix\lab\cdot,\eta\rab-a}(\theta)}\,\rmd x \leq C, \quad n\in\N,
%\]
%for some constant $C<\infty$ and for all $\theta$ from a neighborhood $\cU\subset\cN_\C$ of zero. By Theorem \ref{charint} we have that $\Phi_n$ exists in $(\cN)^{-1}_{\mu_\beta}$ as a weak integral and moreover 
We have for the $T_{\mu_\beta}$-transform of $\Phi_n$ that
\[
	\lb T_{\mu_\beta}\Phi_n\rb(\theta) = \frac{1}{2\pi}\int_{-n}^n \e^{-ixa}\rmE_\beta\Bigl(-\halb x^2\lab\eta,\eta\rab - \halb \lab\theta,\theta\rab - x\lab\theta,\eta\rab \Bigr)\,\rmd x,
\]
see Proposition 5.6 in \cite{GJRS14}. Now it holds that the integrand 
\[
	\1_{[-n,n]}(x)\e^{-ixa}\rmE_\beta\Bigl(-\halb x^2\lab\eta,\eta\rab - \halb \lab\theta,\theta\rab - x\lab\theta,\eta\rab \Bigr)
\] 
converges pointwisely for each $x\in\R$ to $$\e^{-ixa}\rmE_\beta\Bigl(-\halb x^2\lab\eta,\eta\rab - \halb \lab\theta,\theta\rab - x\lab\theta,\eta\rab \Bigr)$$ as $n\to\infty$ and it is bounded by $\abs{\rmE_\beta\Bigl(-\halb x^2\lab\eta,\eta\rab - \halb \lab\theta,\theta\rab - x\lab\theta,\eta\rab \Bigr)}$. It was shown in Proposition 5.2 in \cite{GJRS14} that there is a neighborhood $\cU\subset\cN_\C$ of zero and a constant $C>0$ such that
\[
	\int_\R \abs{\rmE_\beta\Bigl(-\halb x^2\lab\eta,\eta\rab - \halb \lab\theta,\theta\rab - x\lab\theta,\eta\rab \Bigr)}\,\rmd x< C,\quad \theta\in\cU.
\]
Applying dominated convergence we see that $\lb T_{\mu_\beta}\Phi_n\rb(\theta)$ converges as $n\to\infty$ to 
\[
	\frac{1}{2\pi}\int_\R \e^{-ixa}\rmE_\beta\Bigl(-\halb x^2\lab\eta,\eta\rab - \halb \lab\theta,\theta\rab - x\lab\theta,\eta\rab \Bigr)\,\rmd x.
\] 
Moreover for all $\theta\in\cU$ it holds
\[
	\abs{\lb T_{\mu_\beta}\Phi_n\rb(\theta)} \leq \frac{1}{2\pi }\int_\R \abs{\rmE_\beta\Bigl(-\halb x^2\lab\eta,\eta\rab - \halb \lab\theta,\theta\rab - x\lab\theta,\eta\rab \Bigr)}\,\rmd x<C.
\]
Now the assertion follows by applying Theorem \ref{charconv}.
\end{proof}

\section{Grey Noise Analysis}\label{Sec:gna}

\subsection{Basic Definitions}
The main ideas of grey noise analysis go back to Schneider in \cite{Sch90}. He constructed grey Brownian motion on a concrete probability space. Further details were given amongst others by Mura and Mainardi in \cite{MM09} and also in \cite{Kuo, KS93}. We now want to point out that grey noise analysis is a special case of Mittag-Leffler analysis, where the spaces $\cN$ and $\cH$ are chosen in a suitable way as follows.

Consider the space $\cS(\R)$ of Schwartz test functions equipped with the following scalar product
\[
	(\xi,\eta)_\alpha = C(\alpha) \int_\R \abs{x}^{1-\alpha} \overline{\tilde{\xi}(x)}\tilde{\eta}(x) \,\rmd x, \quad \xi,\eta\in\cS(\R).
\]
Here, $0<\alpha<2$ and $C(\alpha) = \Gamma(\alpha+1)\sin(\tfrac{\pi\alpha}{2})$. The notation $\tilde{\eta}$ stands for the Fourier transform of $\eta\in\cS(\R)$, which is defined by
\[
	\tilde{\eta}(x) = (\cF\eta)(x) = \frac{1}{\sqrt{2\pi}}\int_\R \eta(t) \e^{itx}\,\rmd t, \quad x\in\R.
\]
By $\norm{\cdot}_\alpha$ we denote the norm coming from $(\cdot,\cdot)_\alpha$. Define the Hilbert space $\cH_\alpha$ to be the abstract completion of $\cS(\R)$ with respect to $(\cdot,\cdot)_\alpha$. \cite{Sch92} gives an orthonormal system $\lb h^\alpha_n\rb_{n\in\N}$ in $\cS(\R)$ with respect to $(\cdot,\cdot)_\alpha$ and \cite{MM09} constructed an operator $A^\alpha$ on $\cH_\alpha$ such that $A^\alpha h_n^\alpha = (2n+2+1-\alpha)h_n^\alpha$. This allows to define the spaces 
\[
	\cH_{\alpha,p} := \lcb \varphi\in\cH_\alpha \mid \norm{(A^\alpha)^p\varphi}_\alpha <\infty \rcb.
\]

\begin{remark}
Define $\cS_\alpha:=\prlim_{p\to\infty} \cH_{\alpha,p}$ and $\cS_\alpha'$ its dual space. The above considerations show that $\cS_\alpha \subset \cH_\alpha \subset \cS'_\alpha$ is a nuclear triple. Thus we may introduce a Mittag-Leffler measure $\mu_\beta$ on $\cS'_\alpha$. Nevertheless we propose a slightly different choice of the nuclear triple $\cN\subset\cH\subset\cN'$ for two reasons.
\begin{enumerate}
	\item The space $\cH_\alpha$ is defined as an abstract completion. Therefore its elements are Cauchy sequences. For the definition of ggBm later on it is necessary to have $\ind\in\cH_\alpha$. The question arises in which sense the indicator functions $\1_{[a,b)}$, $a,b\in\R$, are in $\cH_\alpha$. A way out would be to identify the indicator functions with a sequence of Schwartz test functions $(\xi_n)_{n\in\N}$, where $(\xi_n)_{n\in\N}$ approximates the indicator function with respect to $\norm{\cdot}_\alpha$. This is still open. Further the later analysis becomes more complicated.
	\item The nature of the test function space $\cS_\alpha$, given as the projective limit of the spaces $\cH_{\alpha,p}$ is not obvious. It is not clear if $\cS_\alpha$ coincides with $\cS(\R)$. 
\end{enumerate}
Therefore we find it useful to develop in the following an alternative approach to a grey noise analysis.
%The space $\cH_\alpha$ remains somehow mysterious. Being an abstract completion its elements are Cauchy sequences and thus the question arises in which sense the indicator functions $\1_{[a,b)}$, $a,b\in\R$, are in $\cH_\alpha$. But for the definition of ggBm it is necessary to have $\ind\in\cH_\alpha$. Furthermore one is tempted to define the $\cH_p$-spaces with the help of the operator $A^\alpha$ as
%\[
%	\cH_{\alpha,p} = \lcb \varphi\in\cH_\alpha \mid \norm{A^p\varphi}_\alpha <\infty \rcb
%\]
%in order to obtain a setting similar to Mittag-Leffler analysis. The properties of $A^\alpha$ then ensure that one gets a nuclear triple by
%\[
%	\cS_\alpha \subset \cH_\alpha \subset \cS'_\alpha
%\]
%where $\cS_\alpha = \cap_{p\in\N} \cH_{\alpha,p}$ is the projective limit of $\cH_{\alpha,p},p\in\N$, and $\cS'_\alpha$ is the dual of $\cS_\alpha$. Unfortunately the nature of the test functions in $\cS_\alpha$ is not obvious, since it is not clear if $\cS_\alpha$ coincides with $\cS(\R)$. Therefore we find it useful to develop an alternative approach to a grey noise analysis.
\end{remark}

We consider the usual nuclear triple from white noise analysis, see Example \ref{ex:wnsetting}. The corresponding distribution space $(\cN)^{-1}_{\mu_\beta}$ will be denoted by $(\cS)^{-1}_{\mu_\beta}$.
Define the operator $\Mhpm$ on $\cS(\R)$ by	
\[
	\Mhpm f := \begin{cases}
		K_H D^{-(H-\halb)}_{\pm}f, & H\in (0,\halb), \\
		f, & H=\halb, \\
		K_H I^{H-\halb}_{\pm}f, & H\in (\halb,1), 
	\end{cases} 
\]
with normalisation constant 
\[	
	K_H:=\Gamma\lb H+\halb\rb\lb\int_0^\infty (1+s)^{H-\halb}-s^{H-\halb}\,\rmd s+\frac{1}{2H}\rb^{-\halb}.
\]
%More details, including definitions and properties for the fractional derivative $D^\alpha_\pm$ and fractional integral $I^\alpha_\pm$ are given in the Appendix \ref{Sec:Frac}. Note that it is not important, whether the fractional derivative is in the sense of Riemann-Liouville, Caputo or Marchaud, since all these fractional derivatives coincide on $\cS(\R)$, see Theorem \ref{Theo:EqualityonS}. 
\begin{remark}\label{Rem:propMH}
\begin{enumerate}
	\item $D^\alpha_\pm$ and $I^\alpha_\pm$ denote the right- and left-sided fractional derivative and fractional integral of order $\alpha$, respectively. More details are given in Appendix \ref{Sec:Frac}, below. Note that it is not important for the definition of $M^H_\pm$, whether the fractional derivative is in the sense of Riemann-Liouville, Caputo or Marchaud, since all these fractional derivatives coincide on $\cS(\R)$, see Theorem \ref{Theo:EqualityonS}, below. A necessary condition for $f$ to be in the domain of the Caputo derivative is that $f$ is differentiable. Hence the Caputo derivative of indicator functions is not defined. Therefore the operator $M^H_\pm$ denotes in the following the Riemann-Liouville fractional derivative or the Marchaud fractional derivative.  
	\item Although defined on $\cS(\R)$, the domain of $M^H_\pm$ is larger. In particular, $M^H_\pm$ can be applied to indicator functions $\1_{[a,b)}$, $-\infty<a<b<\infty$, and it holds for $t\in\R$ that 
	\[
		\lb M^H_\pm \1_{[a,b)}\rb(t) = \frac{1}{\Gamma(1+H-1/2)} \lb \mp(b-t)^{H-1/2}_\mp \pm (a-t)^{H-1/2}_\mp\rb,
	\]
	see Example \ref{Ex:indicator}, below. Here and below, we use for $\alpha\in\R$ the convention 
\[
	t_+^\alpha := \begin{cases} t^\alpha, & t>0,\\ 0, & t\leq 0, \end{cases} \quad \text{ and } \quad t_-^\alpha := \begin{cases} 0, & t\geq 0, \\ (-t)^\alpha, & t<0. \end{cases}
\] 
	Theorem \ref{Def:fracint} and Corollary \ref{Cor:integrable} below show that 
\[
	M^H_\pm \1_{[a,b)}\in L^2(\R,\rmd x).
\]
%	In the case $H\in(1/2,1)$ note that $\1_{[a,b)}\in L^{1/H}(\R,\rmd x)$ and $1<1/H<1/(H-1/2)$. Then Theorem \ref{Def:fracint} yields that $M^H_\pm \1_{[a,b)}\in L^2(\R,\rmd x)$. For $H\in(0,1/2)$ Corollary \ref{Cor:integrable} shows that $M^H_\pm \1_{[a,b)}\in L^2(\R,\rmd x)$.
\end{enumerate}
\end{remark}

The following result for the Fourier transform of a fractional integral or a fractional derivative of $\varphi\in\cS(\R)$ is known (\cite{SKM}):
\begin{align}
	\cF I_{\pm}^\alpha\varphi(x) = \tilde{\varphi}(x)(\mp ix)^{-\alpha} \label{FTofI} \\
	\cF D_{\pm}^\alpha\varphi(x) = \tilde{\varphi}(x)(\mp ix)^{\alpha}, \label{FTofD}
\end{align}
where $0<\alpha<1$, $(\mp ix)^\alpha = \abs{x}^\alpha \exp(\mp \frac{\alpha\pi i}{2}\sign(x))$ and $\varphi\in\cS(\R)$. With the help of this result we give a relation between the $\alpha$-scalar product and the usual $L^2$-scalar product, compare \cite{LV14}.

\begin{lemma}\label{Lemma:sps}
The $\alpha$-scalar product and the $L^2(\R,\rmd x)$-scalar product are related by the operator $\Mhpm$. In fact for all $\xi,\eta\in\cS(\R)$ it holds
\[
	(\xi,\eta)_\alpha = (M^{\alpha/2}_\pm\xi,M^{\alpha/2}_\pm\eta)_{L^2(\R,\rmd x)}.
\]
\end{lemma}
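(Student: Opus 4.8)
The plan is to verify the identity by working entirely on the Fourier side, where both the $\alpha$-scalar product and the action of $M^{\alpha/2}_\pm$ are explicitly diagonalized. First I would recall the defining formula
\[
	(\xi,\eta)_\alpha = C(\alpha) \int_\R \abs{x}^{1-\alpha}\, \overline{\tilde{\xi}(x)}\,\tilde{\eta}(x)\,\rmd x,
\]
so the whole claim reduces to showing that the right-hand side, written with $M^{\alpha/2}_\pm$ and the ordinary $L^2$-inner product, produces exactly the same weight $\abs{x}^{1-\alpha}$ (up to the constant $C(\alpha)$) against $\overline{\tilde\xi}\,\tilde\eta$. Since $\xi,\eta\in\cS(\R)$, the relevant order of the fractional operator is $H-\tfrac12 = \tfrac{\alpha}{2}-\tfrac12 \in(-\tfrac12,\tfrac12)$, and I would apply the Fourier formulas \eqref{FTofI} and \eqref{FTofD} to compute $\cF\bigl(M^{\alpha/2}_\pm\xi\bigr)$ in each of the three regimes $H<\tfrac12$, $H=\tfrac12$, $H>\tfrac12$ dictated by the definition of $M^H_\pm$.

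The key computation is the modulus of the Fourier multiplier. By \eqref{FTofI}–\eqref{FTofD}, applying $M^{\alpha/2}_\pm$ multiplies $\tilde\xi(x)$ by $K_H (\mp ix)^{\mp(H-1/2)}$, and using $(\mp ix)^{\gamma} = \abs{x}^\gamma \exp\bigl(\mp\tfrac{\gamma\pi i}{2}\sign(x)\bigr)$ one sees that the phase factor is a unimodular complex number, so $\bigl|\cF(M^{\alpha/2}_\pm\xi)(x)\bigr| = K_H\,\abs{x}^{H-1/2}\,\abs{\tilde\xi(x)}$, with $H=\tfrac{\alpha}{2}$. Then by the Plancherel theorem
\[
	(M^{\alpha/2}_\pm\xi, M^{\alpha/2}_\pm\eta)_{L^2(\R,\rmd x)} = \int_\R \overline{\cF(M^{\alpha/2}_\pm\xi)(x)}\,\cF(M^{\alpha/2}_\pm\eta)(x)\,\rmd x = K_H^2 \int_\R \abs{x}^{2H-1}\,\overline{\tilde\xi(x)}\,\tilde\eta(x)\,\rmd x,
\]
and since $2H-1 = \alpha-1$, the integrand already carries precisely the weight $\abs{x}^{\alpha-1}$. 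Here the phase factors from $\overline{\cF(M^{\alpha/2}_\pm\xi)}$ and $\cF(M^{\alpha/2}_\pm\eta)$ cancel exactly because the same sign and the same exponent $H-\tfrac12$ appear in both, which is why the conjugate in the $\alpha$-inner product must be on the first slot; I would note that the result holds identically for the $+$ and the $-$ choice. It then remains to identify the normalizing constant.

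The step I expect to be the main obstacle is matching $K_H^2$ against $C(\alpha)$, i.e.\ verifying that $K_H^2 = C(\alpha)=\Gamma(\alpha+1)\sin(\tfrac{\pi\alpha}{2})$ when $H=\tfrac{\alpha}{2}$; compare the exponent $\abs{x}^{\alpha-1}$ just obtained with the $\abs{x}^{1-\alpha}$ appearing in $(\cdot,\cdot)_\alpha$, which forces a sign check in the definition of $(\cdot,\cdot)_\alpha$ relative to the convention $H-\tfrac12=\tfrac{\alpha}{2}-\tfrac12$ versus the multiplier exponent used above. Concretely I would substitute the explicit formula
\[
	K_H = \Gamma\lb H+\halb\rb\lb\int_0^\infty (1+s)^{H-\halb}-s^{H-\halb}\,\rmd s + \frac{1}{2H}\rb^{-\halb}
\]
and evaluate the remaining integral in closed form (it is a Beta-type integral reducing to Gamma functions), then use the reflection formula $\Gamma(z)\Gamma(1-z)=\pi/\sin(\pi z)$ together with the duplication formula to collapse the product into $\Gamma(\alpha+1)\sin(\tfrac{\pi\alpha}{2})$. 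This constant-matching is purely a special-function identity; once it is in place, the three cases of the definition of $M^H_\pm$ all yield the same multiplier modulus and the Plancherel computation closes the proof.
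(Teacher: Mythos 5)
Your overall route --- compute on the Fourier side via \eqref{FTofI} and \eqref{FTofD}, cancel the unimodular phases, apply Plancherel, and match $K_{\alpha/2}^2$ against $C(\alpha)$ --- is exactly the paper's proof. But as written your multiplier bookkeeping has a sign error that inverts the weight, and the repair you defer to would not fix it. The exponent of the Fourier multiplier is $-(H-\tfrac12)$ in \emph{both} regimes: for $H\in(\tfrac12,1)$ the operator is the fractional integral $I^{H-1/2}_\pm$ and \eqref{FTofI} gives $(\mp ix)^{-(H-1/2)}$, while for $H\in(0,\tfrac12)$ it is the fractional derivative $D^{1/2-H}_\pm$ and \eqref{FTofD} gives $(\mp ix)^{1/2-H}=(\mp ix)^{-(H-1/2)}$ again; the $\pm$ choice of the operator affects only the phase, never the sign of the exponent. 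Hence $\abs{\cF(M^{\alpha/2}_\pm\xi)(x)} = K_{\alpha/2}\abs{x}^{(1-\alpha)/2}\abs{\tilde{\xi}(x)}$, and Plancherel produces the weight $\abs{x}^{1-\alpha}$ --- precisely the weight appearing in the definition of $(\cdot,\cdot)_\alpha$, with nothing left to reconcile. Your $\abs{x}^{H-1/2}$, hence the weight $\abs{x}^{\alpha-1}$, is the reciprocal of the correct one, and the ``sign check in the definition of $(\cdot,\cdot)_\alpha$'' cannot rescue this: the definition is fixed, and with weight $\abs{x}^{\alpha-1}$ the two forms genuinely differ for $\alpha\neq 1$. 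This step must be corrected, not reinterpreted as a convention mismatch.

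On the constant, note that the paper does not evaluate the integral in the definition of $K_H$ at all: it quotes the closed form $K_H=\sqrt{2H\sin(\pi H)\Gamma(2H)}$ from \cite{Mish}, whence $K_{\alpha/2}^2 = \alpha\sin\lb\tfrac{\pi\alpha}{2}\rb\Gamma(\alpha) = \Gamma(\alpha+1)\sin\lb\tfrac{\pi\alpha}{2}\rb = C(\alpha)$ is immediate. If you carry out the direct evaluation you propose, beware that the standard normalisation has the integrand \emph{squared}, namely $\int_0^\infty \lb(1+s)^{H-1/2}-s^{H-1/2}\rb^2\,\rmd s$; taken literally without the square, the integral diverges for $H>\tfrac12$, since the integrand behaves like $(H-\tfrac12)s^{H-3/2}$ at infinity with $H-\tfrac32>-1$. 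Finally, the paper verifies $M^{\alpha/2}_\pm\xi\in L^2(\R,\rmd x)$ before invoking Plancherel (via Theorem \ref{Def:fracint} for $1<\alpha<2$, and via \eqref{FTofD} together with square integrability of $\tilde{\xi}(x)(\mp ix)^{(1-\alpha)/2}$ for $0<\alpha<1$); in your plan this is left implicit, which is harmless for Schwartz functions but should be stated.
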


\begin{proof}
From \cite{Mish} we know that the constant $K_H$ can be calculated to be	$K_H = \sqrt{2H\sin(\pi H)\Gamma(2H)}$ and thus $K^2_{\alpha/2} = C(\alpha)$. Now let $\varphi\in\cS(\R)$. In the case $1<\alpha<2$ it holds that $M^{\alpha/2}_\pm\varphi\in L^2(\R,\rmd x)$, see Theorem \ref{Def:fracint} below, since $\varphi\in\cS(\R)\subset L^p(\R,\rmd x)$ for all $p\geq 1$. In the case $0<\alpha<1$ note that $\tilde{\varphi}(x)(\mp ix)^{(1-\alpha)/2}$ defines a square integrable function. Thus \eqref{FTofD} and Plancherel's theorem imply that $D_{\pm}^{(1-\alpha)/2}\varphi \in L^2(\R,\rmd x)$. Hence, using \eqref{FTofI} and \eqref{FTofD}, we directly see that
\begin{align*}
	\lb \widetilde{M^{\alpha/2}_\pm\xi}, \widetilde{M^{\alpha/2}_\pm\eta} \rb_{L^2(\R,\rmd x)} =  (\xi,\eta)_\alpha. 
%	&= C(\alpha) \int_\R \abs{x}^{1-\alpha} \overline{\tilde{\xi}(x)} \tilde{\eta}(x)\,\rmd x \\
%	&= C(\alpha) \int_\R \overline{\abs{x}^{-(\alpha-1)/2}\tilde{\xi}(x)\exp\lb\mp\tfrac{(\alpha-1)\pi i}{4}\sign(x) \rb} \\ &\quad\quad \abs{x}^{-(\alpha-1)/2}\tilde{\eta}(x)\exp\lb\mp\tfrac{(\alpha-1)\pi i}{4}\sign(x) \rb\,\rmd x \\
%	&= C(\alpha) \int_\R \overline{\widetilde{I^{(\alpha-1)/2}_\pm\xi}(x)} \widetilde{I^{(\alpha-1)/2}_\pm\eta}(x)\, \rmd x
\end{align*}
Now the assertion follows by Plancherel's theorem.
\end{proof}

\begin{corollary}
For every $a,b,c,d\in\R$ it holds
\[
	\lb \1_{[a,b)},\1_{[c,d)} \rb_\alpha = \lb M^{\alpha/2}_\pm\1_{[a,b)},M^{\alpha/2}_\pm\1_{[c,d)} \rb_{L^2(\R,\rmd x)}.
\]
\end{corollary}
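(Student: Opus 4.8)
The plan is to reduce the identity to Lemma~\ref{Lemma:sps} by approximating the indicators by Schwartz functions. First I would fix the meaning of the left-hand side: although $\1_{[a,b)}\notin\cS(\R)$, its Fourier transform $\widetilde{\1_{[a,b)}}(x)=(2\pi)^{-1/2}(e^{ibx}-e^{iax})/(ix)$ is explicit and bounded near the origin (where it tends to $(2\pi)^{-1/2}(b-a)$) while decaying like $\abs{x}^{-1}$ at infinity. Hence the integrand $\abs{x}^{1-\alpha}\,\overline{\widetilde{\1_{[a,b)}}(x)}\,\widetilde{\1_{[c,d)}}(x)$ behaves like $\abs{x}^{1-\alpha}$ near $0$ and like $\abs{x}^{-1-\alpha}$ at infinity, so it is integrable for every $0<\alpha<2$ and $(\1_{[a,b)},\1_{[c,d)})_\alpha$ is well defined by the same Fourier integral that defines $(\cdot,\cdot)_\alpha$ on $\cS(\R)$.

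Next I would mollify. Fix $\rho\in C_c^\infty(\R)$ with $\rho\geq0$ and $\int_\R\rho\,\rmd x=1$, set $\rho_n(t)=n\rho(nt)$, and put $\xi_n:=\1_{[a,b)}*\rho_n$ and $\eta_n:=\1_{[c,d)}*\rho_n$. These lie in $C_c^\infty(\R)\subset\cS(\R)$, so Lemma~\ref{Lemma:sps} gives $(\xi_n,\eta_n)_\alpha=(M^{\alpha/2}_\pm\xi_n,M^{\alpha/2}_\pm\eta_n)_{L^2(\R,\rmd x)}$ for every $n$. On the Fourier side $\widetilde{\xi_n}(x)=\sqrt{2\pi}\,\widetilde{\1_{[a,b)}}(x)\,\tilde\rho(x/n)$ with $\abs{\tilde\rho(x/n)}\leq(2\pi)^{-1/2}$ and $\tilde\rho(x/n)\to\tilde\rho(0)=(2\pi)^{-1/2}$ pointwise, so $\widetilde{\xi_n}\to\widetilde{\1_{[a,b)}}$ pointwise while $\abs{\widetilde{\xi_n}}\leq\abs{\widetilde{\1_{[a,b)}}}$ (and likewise for $\eta_n$). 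Dominated convergence in the weighted integral then yields $(\xi_n,\eta_n)_\alpha\to(\1_{[a,b)},\1_{[c,d)})_\alpha$, which disposes of the left-hand side.

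For the right-hand side I would show $M^{\alpha/2}_\pm\xi_n\to M^{\alpha/2}_\pm\1_{[a,b)}$ in $L^2(\R,\rmd x)$. By Lemma~\ref{Lemma:sps} the operator $M^{\alpha/2}_\pm$ is an isometry from $(\cS(\R),\norm{\cdot}_\alpha)$ into $L^2(\R,\rmd x)$, and the domination above shows that $(\xi_n)_{n\in\N}$ is Cauchy for $\norm{\cdot}_\alpha$; hence $(M^{\alpha/2}_\pm\xi_n)_{n\in\N}$ converges in $L^2$ to some limit $g$. Taking Fourier transforms and using \eqref{FTofI} and \eqref{FTofD} gives $\cF(M^{\alpha/2}_\pm\xi_n)(x)=K_{\alpha/2}\,\widetilde{\xi_n}(x)(\mp ix)^{(1-\alpha)/2}$, which converges in $L^2$ to $K_{\alpha/2}\,\widetilde{\1_{[a,b)}}(x)(\mp ix)^{(1-\alpha)/2}$ since $\abs{(\mp ix)^{(1-\alpha)/2}}^2=\abs{x}^{1-\alpha}$; thus $\cF g(x)=K_{\alpha/2}\,\widetilde{\1_{[a,b)}}(x)(\mp ix)^{(1-\alpha)/2}$.

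The hard part will be the identification $g=M^{\alpha/2}_\pm\1_{[a,b)}$: the transform rules \eqref{FTofI} and \eqref{FTofD} are stated only for Schwartz arguments, so one cannot a priori assert $\cF(M^{\alpha/2}_\pm\1_{[a,b)})(x)=K_{\alpha/2}\,\widetilde{\1_{[a,b)}}(x)(\mp ix)^{(1-\alpha)/2}$. I would settle this by working with the concrete object: by Remark~\ref{Rem:propMH} together with Theorem~\ref{Def:fracint} and Corollary~\ref{Cor:integrable} the function $M^{\alpha/2}_\pm\1_{[a,b)}$ is the explicit $L^2$ function given there, and computing its Fourier transform directly from that formula (or, equivalently, invoking closedness of the Riemann--Liouville/Marchaud operators on their $L^2$ domains to identify the graph limit of the sequence $\xi_n$) shows that it agrees with $\cF g$. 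Once $g=M^{\alpha/2}_\pm\1_{[a,b)}$ and the analogous identity for $\eta_n$ are in hand, letting $n\to\infty$ in the Schwartz identity $(\xi_n,\eta_n)_\alpha=(M^{\alpha/2}_\pm\xi_n,M^{\alpha/2}_\pm\eta_n)_{L^2(\R,\rmd x)}$ yields the corollary.
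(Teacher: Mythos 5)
Your outline is sound, but observe that everything in it hinges on the one step you explicitly defer: the identity $\cF\lb M^{\alpha/2}_\pm\1_{[a,b)}\rb(x) = K_{\alpha/2}\,\widetilde{\1_{[a,b)}}(x)(\mp ix)^{(1-\alpha)/2}$. That identity \emph{is} the paper's proof: for $1<\alpha<2$ the paper cites the formula from \cite{SKM}, $\cF I^{(\alpha-1)/2}_\pm\xi(x) = \tilde{\xi}(x)(\mp ix)^{(1-\alpha)/2}$, which holds for every $\xi\in L^1(\R,\rmd x)$ and hence for indicators directly, and for $0<\alpha<1$ it proves Lemma \ref{Lem:fracDerInd}, a distributional Fourier computation starting from the explicit formula of Example \ref{Ex:indicator} (with Corollary \ref{Cor:integrable} guaranteeing the result lies in $L^1(\R,\rmd x)\cap L^2(\R,\rmd x)$); the corollary then follows in one line by Plancherel, exactly as in Lemma \ref{Lemma:sps}. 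Once you concede --- as you do --- that the identification $g=M^{\alpha/2}_\pm\1_{[a,b)}$ must be settled by this direct computation, your entire mollification scaffold (dominated convergence in the weighted Fourier integral, Cauchyness of $(\xi_n)_{n\in\N}$ in $\norm{\cdot}_\alpha$, identification of the $L^2$ limit) becomes redundant: the multiplier identity plus Plancherel already yields the corollary without any approximation. The individual steps of your scheme are correct (the decay estimate $\widetilde{\1_{[a,b)}}(x)=O(\abs{x}^{-1})$ making the left-hand side well defined, the domination $\abs{\widetilde{\xi_n}}\le\abs{\widetilde{\1_{[a,b)}}}$ from $\rho\ge 0$ and $\int_\R\rho\,\rmd x=1$, the isometry argument via Lemma \ref{Lemma:sps}), so this is a valid if roundabout proof. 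One caution: your parenthetical alternative of ``invoking closedness of the Riemann--Liouville/Marchaud operators on their $L^2$ domains'' is not available --- no such closedness is established in the paper, and for the Marchaud derivative, defined as a limit of truncations whose mode of convergence depends on the context, it is not routine; the direct Fourier computation of Lemma \ref{Lem:fracDerInd} is the only supported way to close that gap.
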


\begin{proof}
From Remark \ref{Rem:propMH} we already know that $M^{\alpha/2}_\pm\1_{[a,b)}\in L^2(\R,\rmd x)$. For $1<\alpha<2$ it is calculated in \cite{SKM} that for all $\xi\in L^1(\R,\rmd x)$ it holds 
\[
	\cF M^{\alpha/2}_\pm\xi (x) = \cF I_{\pm}^{{(\alpha-1)/2}}\xi(x) = \tilde{\xi}(x)(\mp \ii x)^{(1-\alpha)/2}, \quad x\in\R.
\]
The same holds for $0<\alpha<1$, see Lemma \ref{Lem:fracDerInd}, below. The assertion follows now by the same arguments as in the proof of Lemma \ref{Lemma:sps}.
\end{proof}

\begin{corollary}\label{Cor:covariance}
For all $s,t\geq 0$ and for all $0<\alpha<2$ it holds that
\[
	\lb M^{\alpha/2}_-\1_{[0,t)}, M^{\alpha/2}_-\1_{[0,s)}\rb_{L^2(\R,\rmd x)} = \halb (t^\alpha+s^\alpha-\abs{t-s}^\alpha).
\]
\end{corollary}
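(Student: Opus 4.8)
The plan is to pass to the Fourier side, collapse the computation to a single scalar integral, and evaluate that integral explicitly. First I would use that $M^{\alpha/2}_-\1_{[0,t)}$ and $M^{\alpha/2}_-\1_{[0,s)}$ lie in $L^2(\R,\rmd x)$ (Remark~\ref{Rem:propMH}) so that Plancherel's theorem applies, and insert the Fourier transform $\cF M^{\alpha/2}_-\1_{[0,t)}(x) = K_{\alpha/2}\,\widetilde{\1_{[0,t)}}(x)\,(\ii x)^{(1-\alpha)/2}$ (as established above for indicator functions) together with the identity $K_{\alpha/2}^2 = C(\alpha)$ from the proof of Lemma~\ref{Lemma:sps}. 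Using $\abs{(\ii x)^{(1-\alpha)/2}}^2 = \abs{x}^{1-\alpha}$, the left-hand side becomes
\[
	C(\alpha)\int_\R \abs{x}^{1-\alpha}\,\overline{\widetilde{\1_{[0,t)}}(x)}\,\widetilde{\1_{[0,s)}}(x)\,\rmd x.
\]
This weighted integral converges for $0<\alpha<2$, since $\widetilde{\1_{[0,t)}}(x)\to t/\sqrt{2\pi}$ as $x\to 0$ (and $\abs{x}^{1-\alpha}$ is locally integrable because $\alpha<2$), while $\abs{\widetilde{\1_{[0,t)}}(x)}^2$ decays like $x^{-2}$ at infinity (integrable against $\abs{x}^{1-\alpha}$ because $\alpha>0$).

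Next I would substitute the explicit transform $\widetilde{\1_{[0,t)}}(x) = (2\pi)^{-1/2}(\e^{\ii xt}-1)/(\ii x)$, turning the integrand into $\tfrac{C(\alpha)}{2\pi}\abs{x}^{-1-\alpha}(\e^{-\ii xt}-1)(\e^{\ii xs}-1)$. Because the weight $\abs{x}^{-1-\alpha}$ is even, the odd part of the product integrates to zero, so I may replace the product by its real part, which splits as $[\cos(x(s-t))-1]-[\cos(xt)-1]-[\cos(xs)-1]$. Each bracket is $O(x^2)$ near the origin, so the three pieces converge separately for $0<\alpha<2$; after the scaling $u=\abs{c}x$ each term $\int_\R\abs{x}^{-1-\alpha}(\cos(cx)-1)\,\rmd x$ equals $-2\abs{c}^\alpha J_\alpha$ with $J_\alpha := \int_0^\infty u^{-1-\alpha}(1-\cos u)\,\rmd u$. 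Collecting the contributions of the three brackets gives
\[
	\lb M^{\alpha/2}_-\1_{[0,t)}, M^{\alpha/2}_-\1_{[0,s)}\rb_{L^2(\R,\rmd x)} = \frac{C(\alpha)J_\alpha}{\pi}\,(t^\alpha+s^\alpha-\abs{t-s}^\alpha).
\]

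It then remains to check that the prefactor equals $\tfrac12$, i.e.\ that $C(\alpha)J_\alpha = \pi/2$. I would compute $J_\alpha$ by integration by parts, whose boundary terms vanish precisely for $0<\alpha<2$, reducing it to $\tfrac1\alpha\int_0^\infty u^{-\alpha}\sin u\,\rmd u = \tfrac1\alpha\Gamma(1-\alpha)\cos(\tfrac{\pi\alpha}{2})$, the sine-integral formula being used in its range $-1<1-\alpha<1$ and the value at $\alpha=1$ following by continuity (where $J_1=\pi/2$). The reflection formula $\Gamma(1-\alpha)\Gamma(\alpha)=\pi/\sin(\pi\alpha)$ together with $\Gamma(1+\alpha)=\alpha\Gamma(\alpha)$ and $\sin(\pi\alpha)=2\sin(\tfrac{\pi\alpha}{2})\cos(\tfrac{\pi\alpha}{2})$ then yields $J_\alpha = \pi/(2\Gamma(1+\alpha)\sin(\tfrac{\pi\alpha}{2}))$, and since $C(\alpha)=\Gamma(\alpha+1)\sin(\tfrac{\pi\alpha}{2})$ the product collapses to $\pi/2$, as required. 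I expect the evaluation of $J_\alpha$ over the full range $0<\alpha<2$ to be the main obstacle: one must organize the three integrals through their convergent cosine form (rather than the individually divergent exponential pieces for $\alpha\ge 1$) and justify the analytic continuation of the sine integral and its limiting value at $\alpha=1$.
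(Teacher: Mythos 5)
Your proposal is correct, but it takes a genuinely different route from the paper. The paper's proof is a two-line citation: it invokes the corollary immediately preceding (the Plancherel identity $\lb \1_{[0,t)},\1_{[0,s)}\rb_\alpha = \lb M^{\alpha/2}_-\1_{[0,t)},M^{\alpha/2}_-\1_{[0,s)}\rb_{L^2(\R,\rmd x)}$) and then quotes Example 3.1 in \cite{MM09} for the value of the $\alpha$-inner product of the indicators. You instead re-derive the Plancherel step (using the same Fourier formulas $\cF M^{\alpha/2}_-\1_{[0,t)}(x)=K_{\alpha/2}\widetilde{\1_{[0,t)}}(x)(\ii x)^{(1-\alpha)/2}$ that the paper establishes via Lemma \ref{Lem:fracDerInd} for $0<\alpha<1$ and the $L^1$-formula from \cite{SKM} for $1<\alpha<2$) and then replace the external citation by a self-contained evaluation of the weighted integral $C(\alpha)\int_\R\abs{x}^{-1-\alpha}(\e^{-\ii xt}-1)(\e^{\ii xs}-1)\,\rmd x/(2\pi)$; this is in effect the classical computation behind the fractional Brownian covariance. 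All the steps check out: the cosine decomposition $[\cos(x(s-t))-1]-[\cos(xt)-1]-[\cos(xs)-1]$ makes each piece absolutely convergent for the full range $0<\alpha<2$; the discarded odd (imaginary) part is itself absolutely integrable, since the linear terms of $\sin(x(s-t))+\sin(xt)-\sin(xs)$ cancel and leave an $O(\abs{x}^{2-\alpha})$ behaviour at the origin, so dropping it is legitimate rather than a principal-value sleight of hand; the integration by parts for $J_\alpha$ has vanishing boundary terms exactly on $0<\alpha<2$; and the constant bookkeeping $C(\alpha)J_\alpha=\pi/2$ via the reflection and duplication identities is exact, with $\alpha=1$ covered by continuity (or directly, since $M^{1/2}_-=\mathrm{id}$ and $J_1=\pi/2$). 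What your route buys is independence from \cite{MM09} and an explicit verification that the normalisation $C(\alpha)=\Gamma(\alpha+1)\sin(\tfrac{\pi\alpha}{2})$ is precisely the one producing the prefactor $\halb$, uniformly over $0<\alpha<2$; what the paper's route buys is brevity, at the cost of outsourcing exactly the scalar integral you evaluate.
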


\begin{proof}
Use the previous lemma and Example 3.1 in \cite{MM09}.
\end{proof}

As in Definition \ref{Def:MLmeas} we define the corresponding Mittag-Leffler measures $\mu_\beta$, $0<\beta<1$, on $\cS'(\R)$ by
\[
	\int_{\cS'(\R)} \e^{i\lab\omega,\varphi\rab}\,\rmd\mu_\beta(\omega) = \rmE_\beta\lb-\halb\lab\varphi,\varphi\rab\rb, \quad\varphi\in\cS(\R).
\]
The measures $\mu_\beta$ are referred to as \textit{grey noise reference measure}. For $\beta=1$ this is the Gaussian white noise measure. Lemma \ref{moments2} gives all moments of $\mu_\beta$ and Remark \ref{remark:extension} allows the extension of the dual pairing $\lab\cdot,\cdot\rab$ to $\cS'(\R) \times L^2(\R,\rmd x)$. Note that $M^{\alpha/2}_\pm\ind\in L^2(\R,\rmd x)$ for all $t\geq 0$ by Corollary \ref{Cor:integrable} and Theorem \ref{Def:fracint}, below. Thus the following definition makes sense:

\begin{definition}\label{Def:gBm}
For $0<\alpha<2$ we define the process
\[
	\cS'(\R)\ni\omega\mapsto B^{\alpha,\beta}_t(\omega) := \lab\omega, M^{\alpha/2}_- \ind \rab,\quad t>0,
\]
and call this process generalized grey Brownian motion. In the case $\alpha=\beta$ we write $B_t^\alpha$ instead of $B_t^{\alpha,\alpha}$. $B_t^\alpha$ is called grey Brownian motion.
\end{definition}

\begin{remark}
In the approach of \cite{MM09} the grey noise measure is defined via the characteristic function $\rmE_\beta(-(\cdot,\cdot)_\alpha)$ and denoted by $\mu_{\alpha,\beta}$. This means that first the parameters $0<\alpha<2$ and $0<\beta<1$ are fixed and then generalized grey Brownian motion $B_t^{\alpha,\beta}$ is constructed in $L^2(\mu_{\alpha,\beta})$. The measure $\mu_\beta$ as defined above is named grey noise reference measure since for fixed $0<\beta<1$ all generalized grey Brownian motions $B_t^{\alpha,\beta}$ for $0<\alpha<2$ can be constructed in the single space $L^2(\mu_\beta)$. 
\end{remark}

\begin{proposition}
Let $0<\alpha<2$ and $0<\beta<1$. Then for all $p\in\N$ there exists $K<\infty$ such that
\[
	\E_{\mu_\beta}\lb\abs{B_t^{\alpha,\beta} - B_s^{\alpha,\beta}}^{2p}\rb \leq K \abs{t-s}^{\alpha p}, \quad t,s\geq 0.
\]
\end{proposition}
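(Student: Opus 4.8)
The plan is to reduce the increment to a single dual pairing and then combine the explicit moment formula of Lemma~\ref{moments2} with the covariance identity of Corollary~\ref{Cor:covariance}. Setting $\xi := M^{\alpha/2}_-\1_{[0,t)} - M^{\alpha/2}_-\1_{[0,s)}$, Definition~\ref{Def:gBm} and linearity of the dual pairing give $B_t^{\alpha,\beta} - B_s^{\alpha,\beta} = \lab\cdot,\xi\rab$. By Remark~\ref{Rem:propMH} both $M^{\alpha/2}_-\1_{[0,t)}$ and $M^{\alpha/2}_-\1_{[0,s)}$ lie in $\cH = L^2(\R,\rmd x)$, hence so does $\xi$, and $\lab\cdot,\xi\rab\in L^2(\mu_\beta)$ is the extended pairing of Remark~\ref{remark:extension}. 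Since $t,s\geq 0$ are arbitrary, no ordering assumption is needed.

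First I would establish, for every $\xi\in\cH$, the moment identity
\[
	\E_{\mu_\beta}\lb\lab\cdot,\xi\rab^{2p}\rb = \frac{(2p)!}{\Gamma(\beta p+1)2^p}\,\abs{\xi}^{2p},
\]
where $\abs{\cdot}$ is the norm of $\cH=L^2(\R,\rmd x)$. Lemma~\ref{moments2} (with $n=p$) gives this for $\xi\in\cN=\cS(\R)$, so the work is to pass from $\cN$ to $\cH$. To do so, choose $(\xi_k)_{k\in\N}\subset\cS(\R)$ with $\xi_k\to\xi$ in $\cH$; applying Lemma~\ref{moments2} to $\xi_k-\xi_m\in\cN$ yields $\E_{\mu_\beta}\lb\lab\cdot,\xi_k-\xi_m\rab^{2p}\rb = \frac{(2p)!}{\Gamma(\beta p+1)2^p}\abs{\xi_k-\xi_m}^{2p}$, so that $(\lab\cdot,\xi_k\rab)_{k\in\N}$ is Cauchy in $L^{2p}(\mu_\beta)$. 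Its $L^{2p}$-limit must coincide with the $L^2(\mu_\beta)$-limit $\lab\cdot,\xi\rab$ from Remark~\ref{remark:extension}, and passing to the limit in the moment formula for $\xi_k$ gives the identity for $\xi$. The main obstacle is precisely this upgrade of the convergence $\lab\cdot,\xi_k\rab\to\lab\cdot,\xi\rab$ from $L^2$ to $L^{2p}$, which is what forces the $2p$-th moments to converge; fortunately the moment formula of Lemma~\ref{moments2} is itself the source of the required $L^{2p}$-Cauchy estimate, so the argument closes without additional integrability input.

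It then remains to evaluate $\abs{\xi}^2 = \lb\xi,\xi\rb_{L^2(\R,\rmd x)}$. Expanding the square into three inner products and applying Corollary~\ref{Cor:covariance} to each (the diagonal terms giving $t^\alpha$ and $s^\alpha$, the cross term giving $\halb(t^\alpha+s^\alpha-\abs{t-s}^\alpha)$) yields $\abs{\xi}^2 = t^\alpha - (t^\alpha+s^\alpha-\abs{t-s}^\alpha) + s^\alpha = \abs{t-s}^\alpha$. Substituting into the moment identity gives
\[
	\E_{\mu_\beta}\lb\abs{B_t^{\alpha,\beta}-B_s^{\alpha,\beta}}^{2p}\rb = \frac{(2p)!}{\Gamma(\beta p+1)2^p}\,\abs{t-s}^{\alpha p},
\]
so the assertion holds, in fact with equality, on taking $K := \frac{(2p)!}{\Gamma(\beta p+1)2^p}$. (Via the Kolmogorov continuity theorem this sharp bound also furnishes Hölder continuous modifications of $B^{\alpha,\beta}$.)
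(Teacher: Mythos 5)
Your proof is correct and follows essentially the same route as the paper: both reduce the increment $B_t^{\alpha,\beta}-B_s^{\alpha,\beta}$ to a single dual pairing and combine the moment formula of Lemma \ref{moments2} with the covariance identity of Corollary \ref{Cor:covariance}, arriving at the same sharp constant $K=(2p)!/(2^p\Gamma(\beta p+1))$ with equality. The only differences are cosmetic: the paper writes the increment as $\lab\cdot,M^{\alpha/2}_-\1_{[s,t)}\rab$ and uses stationarity of the $\alpha$-norm (assuming $s<t$ without loss of generality), whereas you expand the squared norm into three inner products and, commendably, spell out the $L^{2p}$-extension of Lemma \ref{moments2} from $\cN$ to $\cH$ that the paper leaves implicit via Remark \ref{remark:extension}.
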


\begin{proof}
Without loss of generality let $s<t$. First note that $\norm{\1_{[s,t)}}_\alpha = \norm{\1_{[0,t-s)}}_\alpha$. 
%Indeed, it holds for the Fourier transform of the indicator function
%\[
%	(\cF\1_{[s,t)})(x) = \frac{1}{\sqrt{2\pi}} \frac{\e^{\ii tx}-\e^{\ii sx}}{\ii x} = \frac{\e^{\ii sx}}{\sqrt{2\pi}} \frac{\e^{\ii (t-s)x}-1}{\ii x} = \e^{\ii sx}(\cF\1_{[0,t-s)})(x),\quad x\in\R.
%\]
%Then by the definition of $\norm{\cdot}_\alpha$ we get
%\begin{align*}
%	\norm{\1_{[s,t)}}_\alpha^2 &= C(\alpha) \int_\R \abs{x}^{1-\alpha} \abs{(\cF\1_{[s,t)})(x)}^2\,\rmd x \\
%	&= C(\alpha) \int_\R \abs{x}^{1-\alpha} \abs{(\cF\1_{[0,t-s)})(x)}^2\,\rmd x \\ 
%	&= \norm{\1_{[0,t-s)}}_\alpha^2.
%\end{align*}
Corollary \ref{Cor:covariance} shows that $\norm{\1_{[s,t)}}^2_\alpha = (t-s)^\alpha$. Using the moments of $\mu_\beta$, see Lemma \ref{moments2}, we find
\begin{align*}
	\E_{\mu_\beta}\lb \abs{B_t^{\alpha,\beta} - B_s^{\alpha,\beta}}^{2p}\rb &= \frac{(2p)!}{2^p\Gamma(\beta p+1)} \norm{\1_{[s,t)}}^{2p}_\alpha \\
	&= \frac{(2p)!}{2^p\Gamma(\beta p+1)} (t-s)^{\alpha p}.
\end{align*}

\end{proof}

The last proposition ensures that each generalized grey Brownian motion has a continuous version. Indeed, choose $p\in\N$ such that $\alpha p>1$ then the previous proposition provides the estimate  $\E_{\mu_\beta}( (B_t^{\alpha,\beta} - B_s^{\alpha,\beta})^{2p}) \leq K \abs{t-s}^{1+q}$ with $q=\alpha p-1 >0$. This estimate is sufficient to apply Kolmogorov's continuity theorem.

\begin{proposition}\label{Pro:Properties}
The processes $\lb\gBm\rb_{t\geq 0}$, $0<\alpha<2$, $0<\beta<1$, have the following properties:
\begin{enumerate}[label=\textit{(\roman*)}]
	\item $(B_t^{\alpha,\beta})_{t\geq 0}$ has covariance
	\[
		\E\lb\gBm B_s^{\alpha,\beta}\rb = \frac{1}{2\Gamma(\beta+1)}\lb t^\alpha+s^\alpha -\abs{t-s}^\alpha\rb =: \frac{\gamma_{\alpha,\beta}(t,s)}{2},\quad t,s\ge 0,
	\]
	and $\E((\gBm)^2) = \frac{1}{\Gamma(\beta+1)}t^\alpha$ for $t\ge 0$.
	\item The density of the finite dimensional distributions of $(\gBm)_{t\geq 0}$ with respect to the Lebesgue measure are given by
	\begin{align*}
		f_{\alpha,\beta}(x) = \frac{(2\pi)^{-n/2}}{\sqrt{\Gamma(1+\beta)^n \det\gamma_{\alpha,\beta}}} \int_0^\infty \frac{M_\beta(\tau)}{\tau^{n/2}} \exp\lb-\halb\frac{x^T\gamma_{\alpha,\beta}^{-1}x}{\tau\Gamma(1+\beta)}\rb\,\rmd\tau,
	\end{align*}
	where $x\in\Rn$ and $\gamma_{\alpha,\beta}$ denotes the matrix $\lb\gamma_{\alpha,\beta}(t_i,t_j)\rb_{i,j=1,\dots,n}\in\R^{n\times n}$. For the definition of the $M$-function see Appendix \ref{Sec:H-function}, below.
	\item $(\gBm)_{t\geq 0}$ has stationary increments.
\end{enumerate}
\end{proposition}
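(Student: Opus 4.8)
The three assertions are essentially independent, and all reduce to the defining identity of $\mu_\beta$ together with Lemma \ref{Lemma:isometry} (extended from $\cN$ to $\cH=L^2(\R,\rmd x)$ via Remark \ref{remark:extension}) and the inner-product formula of Corollary \ref{Cor:covariance}. Throughout I write $u_t := M^{\alpha/2}_-\ind\in L^2(\R,\rmd x)$, so that $\gBm=\lab\cdot,u_t\rab$. For \textit{(i)} I would first note $\E(\gBm)=0$ from the vanishing of odd moments in Lemma \ref{moments2}, so that the covariance equals $\E(\gBm B_s^{\alpha,\beta})$. Applying the $\cH$-version of Lemma \ref{Lemma:isometry} gives $\E(\lab\cdot,u_t\rab\lab\cdot,u_s\rab)=\frac{1}{\Gamma(\beta+1)}(u_t,u_s)_{L^2(\R,\rmd x)}$, and substituting Corollary \ref{Cor:covariance} yields $\frac{1}{2\Gamma(\beta+1)}(t^\alpha+s^\alpha-\abs{t-s}^\alpha)=\gamma_{\alpha,\beta}(t,s)/2$; setting $s=t$ gives the variance.

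For \textit{(ii)} — the main part — the plan is to compute the joint characteristic function and then identify it as a Gaussian scale mixture. For $\theta\in\Rn$ and times $t_1,\dots,t_n$ put $\varphi=\sum_{j}\theta_j u_{t_j}\in L^2(\R,\rmd x)$; the defining property of $\mu_\beta$ gives
\[
\E\Bigl[\exp\Bigl(i\sum_{j}\theta_j B_{t_j}^{\alpha,\beta}\Bigr)\Bigr]=\rmE_\beta\bigl(-\tfrac12(\varphi,\varphi)_{L^2(\R,\rmd x)}\bigr)=\rmE_\beta\bigl(-\tfrac12\,\theta^T\Sigma\theta\bigr),
\]
where $\Sigma=\bigl((u_{t_j},u_{t_k})_{L^2(\R,\rmd x)}\bigr)_{j,k}$ is, by Corollary \ref{Cor:covariance}, the constant multiple $\tfrac12\Gamma(1+\beta)\gamma_{\alpha,\beta}$ of the matrix $\gamma_{\alpha,\beta}$. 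I would then invoke the subordination representation of the Mittag-Leffler function by the $M$-function, $\rmE_\beta(-s)=\int_0^\infty M_\beta(\tau)\e^{-s\tau}\,\rmd\tau$ (Appendix \ref{Sec:H-function}), to rewrite the right-hand side as $\int_0^\infty M_\beta(\tau)\exp(-\tfrac12\theta^T(\tau\Sigma)\theta)\,\rmd\tau$, i.e.\ as a mixture over $\tau$ of centred Gaussian characteristic functions with covariance $\tau\Sigma$. Finally, since $\exp(-\tfrac12\theta^T(\tau\Sigma)\theta)$ is the Fourier transform of the $N(0,\tau\Sigma)$ density, a Fubini/Fourier-inversion argument turns the mixture of characteristic functions into the corresponding mixture of Gaussian densities; expressing $\det(\tau\Sigma)$ and $(\tau\Sigma)^{-1}$ through $\gamma_{\alpha,\beta}$ and pulling the $\tau$-powers out of the determinant produces the claimed $f_{\alpha,\beta}$.

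The delicate points of \textit{(ii)}, which I expect to be the real obstacle, are threefold: (a) one must know that $\Sigma$ (equivalently $\gamma_{\alpha,\beta}$) is strictly positive definite for distinct $t_1,\dots,t_n$, which requires the linear independence of the $u_{t_j}$ in $L^2(\R,\rmd x)$, so that $\Sigma^{-1}$ and the Gaussian density are well defined; (b) the interchange of the $\tau$-integral with the Fourier-inversion integral must be justified by Fubini/dominated convergence, using $M_\beta\ge 0$ with $\int_0^\infty M_\beta(\tau)\,\rmd\tau=1$; and (c) the bookkeeping of the constants — the factors of $2$, $\pi$ and $\Gamma(1+\beta)$ that arise when rewriting $\Sigma$ via $\gamma_{\alpha,\beta}$ and when extracting $\tau^{n/2}$ from $\sqrt{\det(\tau\Sigma)}$ — has to be carried out carefully to reach exactly the stated normalisation. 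The structural idea is immediate from the subordination formula; it is this constant-tracking (and the invertibility in (a)) that is error-prone.

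For \textit{(iii)} I would show that the joint law of the increments is invariant under a time shift $h>0$. Repeating the computation of \textit{(ii)} for the increment vector $(B_{t_j+h}^{\alpha,\beta}-B_{s_j+h}^{\alpha,\beta})_j$ shows that its characteristic function is $\rmE_\beta\bigl(-\tfrac12\tilde\theta^T\widetilde\Sigma\,\tilde\theta\bigr)$ with Gram-matrix entries $(M^{\alpha/2}_-\1_{[s_j+h,t_j+h)},\,M^{\alpha/2}_-\1_{[s_k+h,t_k+h)})_{L^2(\R,\rmd x)}$. Since $M^{\alpha/2}_-$ is a convolution (hence translation-commuting) operator and translation is an isometry of $L^2(\R,\rmd x)$, these entries do not depend on $h$; thus the characteristic function, and therefore the finite-dimensional law of the increments, is independent of $h$. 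Alternatively, staying within the results already proved, one checks from the covariance formula of \textit{(i)} that $\E[(B_{t+h}^{\alpha,\beta}-B_{s+h}^{\alpha,\beta})(B_{u+h}^{\alpha,\beta}-B_{v+h}^{\alpha,\beta})]$ is independent of $h$ by a direct cancellation of the single-time terms $(\cdot+h)^\alpha$, and — because the finite-dimensional distributions are determined by the covariance through the fixed mixing measure $M_\beta$ — shift-invariance of the covariance already yields stationarity of the increments.
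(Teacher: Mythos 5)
Your plan is correct and, for \textit{(i)} and \textit{(ii)}, essentially coincides with the paper's proof, which is given almost entirely by citation: the paper obtains \textit{(i)} from Lemma \ref{Lemma:sps}, Lemma \ref{Lemma:isometry} and Example 3.1 in \cite{MM09} (your route through Corollary \ref{Cor:covariance} is the same chain, and your explicit appeal to Remark \ref{remark:extension} to extend Lemma \ref{Lemma:isometry} from $\cN$ to $\cH$ fills a step the paper glosses over), while for \textit{(ii)} the paper simply invokes ``the same arguments as in \cite{MP08}, Proposition 1'' --- which is exactly the characteristic-function-plus-subordination argument you spell out, so you have reconstructed the outsourced proof rather than found a new one. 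Your treatment of \textit{(iii)} is genuinely different and somewhat stronger: the paper only verifies $\E\lb\e^{i\theta(B^{\alpha,\beta}_{t+h}-\gBm)}\rb = \E\lb\e^{i\theta B^{\alpha,\beta}_h}\rb$ via Proposition 3.2 in \cite{MM09} and Lemma \ref{Lemma:sps}, i.e.\ equality of one-dimensional increment laws, whereas your observation that $M^{\alpha/2}_-$ commutes with translations and that $\1_{[s+h,t+h)}$ is a translate of $\1_{[s,t)}$ gives shift-invariance of the Gram matrix and hence of the \emph{joint} law of any increment vector, self-contained within the paper's framework. Two remarks on the ``delicate points'' you flag: on (c), your instinct is exactly right --- carrying the constants honestly from Definition \ref{Def:gBm} gives $\Sigma=\tfrac{\Gamma(1+\beta)}{2}\gamma_{\alpha,\beta}$, so Fourier inversion produces the exponent $\exp\lb-x^T\gamma_{\alpha,\beta}^{-1}x/(\tau\Gamma(1+\beta))\rb$ (without the factor $\halb$) and an extra $2^{n/2}$ in the prefactor; the density as displayed in the proposition corresponds to the normalisation of \cite{MP08} (mixture covariance $\gamma_{\alpha,\beta}$ rather than the process covariance $\gamma_{\alpha,\beta}/2$ from part \textit{(i)}), consistent with the paper's own later remark that only the scaled process $B^{\alpha,\beta}_{2^{1/\alpha}t}$ matches \cite{MP08,MM09} --- so an honest execution of your plan would surface this factor-of-two discrepancy rather than ``reach exactly the stated normalisation.'' On (a), strict positive definiteness of $\gamma_{\alpha,\beta}$ for distinct $t_1,\dots,t_n>0$ does hold, since $\tfrac{\Gamma(1+\beta)}{2}\gamma_{\alpha,\beta}$ is the Gram matrix of the functions $M^{\alpha/2}_-\1_{[0,t_j)}$, i.e.\ the covariance matrix of fractional Brownian motion with Hurst index $\alpha/2$, whose strict positive definiteness at distinct times is standard; you correctly identify this prerequisite, which neither the paper's proof nor its citation addresses explicitly.
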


\begin{proof}
The covariance can be calculated using Lemma \ref{Lemma:sps} and Lemma \ref{Lemma:isometry} and Example 3.1 in \cite{MM09}. Obviously we can deduce that $\E\lb (\gBm)^2\rb = \frac{1}{\Gamma(\beta+1)}t^\alpha$. The second assertion follows by the same arguments as in \cite{MP08}, Proposition 1. Proposition 3.2 in \cite{MM09} and Lemma \ref{Lemma:sps} show that
\begin{align*}
	&\E\lb\e^{i\theta(B^{\alpha,\beta}_{t+h}-\gBm)}\rb =\E\lb\e^{i\theta B^{\alpha,\beta}_h}\rb.
\end{align*}
This shows the third assertion.
\end{proof}

\begin{remark}
Consider the scaled ggBm $\hat{B}^{\alpha,\beta}_t := B^{\alpha,\beta}_{2^{1/\alpha}t}$. Then a comparison of Proposition \ref{Pro:Properties} with Proposition 2 in \cite{MP08} shows that the scaled version of Definition \ref{Def:gBm} coincides with the definitions of \cite{MP08,MM09}. Consequently $\hat{B}^{\beta,\beta}$ is a grey Brownian motion as in \cite{Sch92}.
\end{remark}

\begin{remark}
Generalized grey Brownian motion is a generalization of well known stochastic processes. Choosing $\beta=1$ then Proposition \ref{Pro:Properties} shows that $\E_{\mu_1}(B_t^{\alpha,1} B_s^{\alpha,1}) = 1/2((t^\alpha + s^\alpha - \abs{t-s}^\alpha)$ and the finite dimensional distributions of $B_t^{\alpha,1}$ are given by
\[
	f_{\alpha,1} (x) = \frac{1}{\sqrt{2\pi \det\gamma_{\alpha,1}}^n} \exp\lb -\halb x^T\gamma_{\alpha,1}^{-1}x\rb.
\]
Thus $B_t^{\alpha,1}$ is a fractional Brownian motion with Hurst parameter $H=\alpha/2$. Consequently the process $B_t^{1,1}$ is a Brownian motion. 
\end{remark}

\begin{remark}
A random vector $X=(X_1,\ldots,X_n)^T$, $n\in\N$, is said to have an elliptical distribution with parameter $m\in\Rn$, symmetric and positive definite $\Sigma\in\R^{n\times n}$ and some characteristic function $\phi\colon\R\to\R$ (for short $X\sim \mathrm{EC}_n(m,\Sigma,\phi)$)  if its characteristic function is of the form 
\[
	\E(\e^{\ii\theta^T X}) = \e^{\ii\theta^Tm} \phi(\theta^T\Sigma\theta),\quad \theta\in\Rn,
\]
see e.g.~equation (2.11) in \cite{FKN90}. The proof of Proposition \ref{Pro:Properties} shows that $X=(B_{t_1}^{\alpha,\beta},\ldots,B_{t_n}^{\alpha,\beta})$ has an elliptical distribution and $X\sim \mathrm{EC}_n(0,\Gamma(\beta+1)\gamma_{\alpha,\beta}, \rmE_\beta(-\cdot))$, see also \cite{DSE13}.
\end{remark}

\section{The time-fractional heat equation}\label{Sec:Heateq}
The well-known heat equation
\begin{align}\label{eq:heateq}
	\partt u(t,x) &= \halb \partxx u(t,x), \\
	u(0,x) &= u_0(x), \nonumber
\end{align}
for $t\ge 0$, $x\in\R$, and initial value $u_0\in\cS(\R)$ can be generalized to a fractional differential equation in two ways. Replacing the time derivative by the Caputo derivative of order $\alpha$ denoted by $^C\!D^{2\alpha}_{0+}$, $0<\alpha\leq 1$, we obtain the equation
\begin{align}\label{eq:Meq}
	\lb^C\!D^{\alpha}_{0+} u\rb(t,x) = \halb \partxx u(t,x), \quad x\in\R,\;t\geq 0.
\end{align}
Starting from the heat equation in integral from and replacing the integral by a Riemann-Liouville fractional integral gives the equation
\begin{equation}\label{eq:Seq}
	u(t,x) = u_0(x) + \frac{1}{\Gamma(\alpha)} \int_0^t (t-s)^{\alpha-1} \partxx u(s,x)\,\rmd s.
\end{equation}
\eqref{eq:Meq} has been solved for example in \cite{Mai95} and a solution to \eqref{eq:Seq} is given in \cite{SW89}. The Green's function $G$ has the series expansion
\begin{align}\label{eq:SGreen}
	&G(t,x,\alpha) =\frac{1}{\sqrt{4\pi t^\alpha}}\sum_{k=0}^\infty \frac{(-1)^k}{k!} \frac{\Gamma(1/2-k)}{\Gamma(1-\alpha k-\alpha/2)}\lb\frac{r^2}{4t^\alpha}\rb^k \nonumber\\ &\quad\quad + \frac{r}{4t^\alpha\sqrt{\pi}}\sum_{k=0}^\infty \frac{(-1)^k}{k!} \frac{\Gamma(-1/2-k)}{\Gamma(1-\alpha k-\alpha)}\lb\frac{r^2}{4t^\alpha}\rb^k.
\end{align}
The same solution has also been given in \cite{Koch90}. Note that in general fractional differential equations with Caputo derivative are equivalent to the fractional integral equations using the Riemann-Liouville fractional integral if the solution is absolutely continuous, see Theorem \ref{Theo:CDI}, below.

For the heat equation \eqref{eq:heateq} the solution is given by the Feynman-Kac formula, i.e.~$u(t,x) = \E(u_0(x+B_t))$, for twice differentiable initial value $u_0$ with compact support and a Brownian motion $B$. \cite{Sch92} showed that $u(t,x) = \E(u_0(x+B_t^\alpha))$ solves the fractional heat equation \eqref{eq:Seq} for $u_0\in\cS(\R)$ and grey Brownian motion $(B_t^\alpha)_{t\geq 0} := (B_t^{\alpha,\alpha})_{t\geq 0}$. We prove in the following that, replacing $u_0$ by $\delta_x$, a Green's function to \eqref{eq:Seq} is obtained. For the proof we need the following lemma:

\begin{lemma}\label{Lem:eqEbeta}
For all $\lambda\in\C$ and for all $\beta\in(0,1)$ the Mittag-Leffler function $\rmE_\beta$ satisfies the following equation:
\[
	\rmE_\beta\lb-\halb\lambda^2 t^\alpha\rb = 1- \halb\lambda^2 \frac{1}{\Gamma(\beta)} \int_0^t (t^{\alpha/\beta} - s^{\alpha/\beta})^{\beta-1} \frac{\alpha}{\beta} s^{\alpha/\beta-1} \rmE_\beta\lb - \halb\lambda^2 s^\alpha\rb\,\rmd s.
\]
\end{lemma}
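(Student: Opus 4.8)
The plan is to reduce the stated identity to a standard Riemann--Liouville fractional-integral relation for the Mittag-Leffler function and then verify that relation by term-by-term integration of the power series. First I would abbreviate $c := -\halb\lambda^2\in\C$, so that the claim reads
\[
	\rmE_\beta\lb c\,t^\alpha\rb = 1 + c\,\frac{1}{\Gamma(\beta)} \int_0^t (t^{\alpha/\beta} - s^{\alpha/\beta})^{\beta-1} \frac{\alpha}{\beta} s^{\alpha/\beta-1} \rmE_\beta\lb c\,s^\alpha\rb\,\rmd s.
\]
Next I would perform the substitution $u = s^{\alpha/\beta}$, writing $T := t^{\alpha/\beta}$; then $\rmd u = \frac{\alpha}{\beta} s^{\alpha/\beta-1}\,\rmd s$ and $s^\alpha = u^\beta$, so the integral collapses to the Riemann--Liouville fractional integral of order $\beta$:
\[
	\frac{c}{\Gamma(\beta)} \int_0^T (T-u)^{\beta-1} \rmE_\beta\lb c\,u^\beta\rb\,\rmd u.
\]
Since $t^\alpha = T^\beta$, the whole statement becomes equivalent to the single relation $\rmE_\beta(cT^\beta) - 1 = c\,I^\beta_{0+}[\,\rmE_\beta(c\,\cdot^\beta)\,](T)$, valid for all $T\ge 0$. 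This is exactly the fixed-point identity expressing that $y(T) = \rmE_\beta(cT^\beta)$ solves the fractional relaxation equation with Caputo derivative ${}^C\!D^\beta_{0+}y = cy$, $y(0)=1$; note that the parameter $\alpha$ has disappeared, as it merely rescales the time variable.

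To prove the reduced relation I would insert the power series $\rmE_\beta(c\,u^\beta) = \sum_{n=0}^\infty c^n u^{\beta n}/\Gamma(\beta n+1)$ and integrate term by term. The key computation is the fractional integral of a power, which follows from the Beta integral:
\[
	\frac{1}{\Gamma(\beta)} \int_0^T (T-u)^{\beta-1} u^{\beta n}\,\rmd u = \frac{\Gamma(\beta n+1)}{\Gamma(\beta(n+1)+1)} T^{\beta(n+1)}.
\]
Multiplying by $c^{n+1}/\Gamma(\beta n+1)$ and summing, the factors $\Gamma(\beta n+1)$ cancel, and after the index shift $m = n+1$ I would obtain $\sum_{m=1}^\infty c^m T^{\beta m}/\Gamma(\beta m+1) = \rmE_\beta(cT^\beta) - 1$, which is the desired relation.

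The one point requiring care is the justification of interchanging summation and integration. Since $\rmE_\beta$ is entire, its series converges absolutely and uniformly on the compact interval $u\in[0,T]$; combined with the integrability of the weight $(T-u)^{\beta-1}$ for $\beta>0$, dominated convergence (or uniform convergence against an $L^1$ weight) legitimizes the interchange for every fixed $T$. This is the main technical obstacle, though a mild one; the substitution and the Beta-integral evaluation are routine. Finally, undoing the substitution $T = t^{\alpha/\beta}$ and restoring $c = -\halb\lambda^2$ yields the claimed identity for all $\lambda\in\C$ and $\beta\in(0,1)$.
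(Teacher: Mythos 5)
Your proposal is correct and follows essentially the same route as the paper: the paper first verifies the case $\alpha=\beta$ by inserting the power series of $\rmE_\beta$ and integrating term by term (the Beta-integral evaluation you spell out), and then obtains the general case via the same substitution $s=u^{\alpha/\beta}$, merely applied in the opposite direction to yours. Your explicit justification of the sum--integral interchange and the Beta-function computation only fill in details the paper leaves implicit.
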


\begin{proof}
First a simple calculation shows that
\begin{equation}\label{eq:MLequation}
	\rmE_\beta\lb-\halb\lambda^2 t^\beta\rb = 1- \halb\lambda^2 \frac{1}{\Gamma(\beta)} \int_0^t (t - s)^{\beta-1} \rmE_\beta\lb - \halb\lambda^2 s^\beta\rb\,\rmd s.
\end{equation}
Indeed, insert the series expansion of $\rmE_\beta$ in the right hand side. Due to the absolute convergence of the series we may interchange sum and integral. For the general case $\alpha\neq \beta$ note that
\[
	\rmE_\beta\lb-\halb\lambda^2 t^\alpha\rb = \rmE_\beta\lb -\halb\lambda^2 (t^{\alpha/\beta})^\beta\rb.
\]
Applying \eqref{eq:MLequation} yields
\begin{equation}\label{eq:fracintML}
	\rmE_\beta\lb-\halb\lambda^2 t^\alpha\rb = 1- \halb\lambda^2 \frac{1}{\Gamma(\beta)} \int_0^{t^{\alpha/\beta}} (t^{\alpha/\beta} - s)^{\beta-1} \rmE_\beta\lb - \halb\lambda^2 s^\beta\rb\,\rmd s.
\end{equation}
Now the coordinate transform $s=u^{\alpha/\beta}$ yields the desired result.
\end{proof}

\begin{remark}
Equation \eqref{eq:fracintML} shows that
\begin{equation}\label{eq:MLrelation}
	\rmE_\beta\lb-\halb\lambda^2 t^\alpha\rb = 1 - \halb\lambda^2 \lb I^{\beta}_{0+}\rmE_\beta\lb-\halb\lambda^2(\cdot)^\beta\rb\rb(t^{\alpha/\beta}).
\end{equation}
\end{remark}

\begin{theorem}\label{Theo:FFK}
Let $0<\beta<1$ and $0<\alpha<2$. For $x,y\in\R$ and $t> 0$ define $K(t,x,y) := \E_{\mu_\beta}\lb \delta_y(x+B_t^{\alpha,\beta})\rb$. Then $K$ is a Green's function to the equation
\[
	u(t,x) = u_0(x) + \halb \lb I^\beta_{0+} \partxx u((\cdot)^{\beta/\alpha},x)\rb(t^{\alpha/\beta}),\quad t>0,\;x\in\R,
\]
with initial value $u_0\in L^1(\R,\rmd x)\cap L^2(\R,\rmd x)$ satisfying
\begin{align*}
	&\int_\R \abs{(\cF^{-1}u_0)(\lambda)\lambda}^{1+\varepsilon} \,\rmd\lambda <\infty \quad\text{and}\quad\int_\R \abs{(\cF^{-1}u_0)(\lambda)\lambda^2}^{1+\varepsilon}\,\rmd\lambda <\infty,
\end{align*}
for some $\varepsilon>0$.
\end{theorem}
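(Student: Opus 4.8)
The plan is to first derive an explicit Fourier representation of the kernel $K$, and then to verify the integral equation by applying a spatial Fourier transform, which reduces everything to the Mittag-Leffler identity \eqref{eq:MLrelation} already at hand.

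For the first step I would write Donsker's delta as the weak integral \eqref{eq:DonskersDelta}. With $\eta_t := M^{\alpha/2}_-\ind\in\cH$ and shift $a=y-x$,
\[
	\delta_y\lb x + B_t^{\alpha,\beta}\rb = \frac{1}{2\pi}\int_\R \e^{i\lambda\lb\lab\cdot,\eta_t\rab + x - y\rb}\,\rmd\lambda \in (\cN)^{-1}_{\mu_\beta},
\]
understood in the sense of Theorem \ref{charint}. Since $\E_{\mu_\beta}(\Phi) = \ddp{\Phi}{1}_{\mu_\beta} = S_{\mu_\beta}\Phi(0)$ (because $e_{\mu_\beta}(0,\cdot)=1$) and, by Theorem \ref{charint}, the $S_{\mu_\beta}$-transform of a weak integral is the integral of the $S_{\mu_\beta}$-transforms, evaluating at $\theta=0$ lets me pull $\E_{\mu_\beta}$ inside the $\lambda$-integral. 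Each integrand then gives $\E_{\mu_\beta}(\e^{i\lambda\lab\cdot,\eta_t\rab}) = \rmE_\beta\lb-\halb\lambda^2\lab\eta_t,\eta_t\rab\rb$ by the defining characteristic function of $\mu_\beta$, extended from $\cN$ to $\cH$ via Remark \ref{remark:extension}, while $\lab\eta_t,\eta_t\rab = t^\alpha$ by Corollary \ref{Cor:covariance}. This yields
\[
	K(t,x,y) = \frac{1}{2\pi}\int_\R \e^{i\lambda(x-y)}\rmE_\beta\lb-\halb\lambda^2 t^\alpha\rb\,\rmd\lambda,
\]
which converges absolutely for $t>0$ because $\rmE_\beta(-\halb\lambda^2 t^\alpha) = O(\lambda^{-2})$ as $\abs{\lambda}\to\infty$.

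Next I would set $u(t,x) := \int_\R K(t,x,y)u_0(y)\,\rmd y$ and, using Fubini, identify its spatial Fourier transform as $(\cF^{-1}u)(t,\lambda) = \rmE_\beta\lb-\halb\lambda^2 t^\alpha\rb(\cF^{-1}u_0)(\lambda)$; the initial value $u(0,\cdot)=u_0$ then follows from $\rmE_\beta(0)=1$. To verify the equation I would apply $\cF^{-1}$ in $x$ to both sides: $\partxx$ becomes multiplication by $-\lambda^2$, and the fractional integral $I^\beta_{0+}$ (acting in the time variable) commutes with $\cF^{-1}$. Dividing by $(\cF^{-1}u_0)(\lambda)$, the transformed equation collapses exactly to \eqref{eq:MLrelation},
\[
	\rmE_\beta\lb-\halb\lambda^2 t^\alpha\rb = 1 - \halb\lambda^2 \lb I^\beta_{0+}\rmE_\beta\lb-\halb\lambda^2(\cdot)^\beta\rb\rb(t^{\alpha/\beta}),
\]
the time substitution $(\cdot)^{\beta/\alpha}$ and the evaluation at $t^{\alpha/\beta}$ being precisely what turns $s^\alpha$ into $\rho^\beta$ inside the fractional integral, as in Lemma \ref{Lem:eqEbeta}. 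Fourier inversion then gives the equation pointwise, and formally taking $u_0=\delta_x$ (so that $\cF^{-1}u_0$ is constant) reproduces $K$ itself, which is the sense in which $K$ is a Green's function.

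Thus the conceptual core is essentially free: everything reduces to the Mittag-Leffler identity of Lemma \ref{Lem:eqEbeta}, and I expect the main obstacle to be the analytic bookkeeping. Concretely, one must justify (i) the interchange of $\E_{\mu_\beta}$ with the weak integral, (ii) Fubini in the definition of $u$, (iii) differentiation under the integral sign to produce $\partxx u$ together with the interchange of $I^\beta_{0+}$ and $\cF^{-1}$, and (iv) the final Fourier inversion. Steps (ii)--(iv) are exactly where the hypotheses $\int_\R\abs{(\cF^{-1}u_0)(\lambda)\lambda}^{1+\varepsilon}\,\rmd\lambda<\infty$ and $\int_\R\abs{(\cF^{-1}u_0)(\lambda)\lambda^2}^{1+\varepsilon}\,\rmd\lambda<\infty$ enter: together with the $O(\lambda^{-2})$ decay of $\rmE_\beta(-\halb\lambda^2 t^\alpha)$ for $t>0$ and its boundedness by $1$ as $t\downarrow 0$, they supply the integrable majorants that keep $\lambda^2(\cF^{-1}u)(s,\lambda)$ in $L^1(\rmd\lambda)$ controllably in $s$, so that the fractional integral may be taken under the integral sign. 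Controlling these estimates near $t=0$, where $\rmE_\beta$ loses its decay and the smoothing from $K$ degenerates, is the most delicate point.
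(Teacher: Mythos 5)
Your proposal is correct and follows essentially the same route as the paper: the Fourier representation $K(t,x,y)=\frac{1}{2\pi}\int_\R \e^{\ii\lambda(x-y)}\rmE_\beta\lb-\halb\lambda^2 t^\alpha\rb\,\rmd\lambda$ (which the paper simply cites from Proposition 5.6 of \cite{GJRS14} rather than re-deriving via the $S_{\mu_\beta}$-transform of the weak integral), Fubini to identify $\cF^{-1}u = \rmE_\beta\lb-\halb\lambda^2 t^\alpha\rb\cF^{-1}u_0$, differentiation under the integral sign for $\partxx u$, and the reduction of the integral equation to the Mittag-Leffler identity of Lemma \ref{Lem:eqEbeta}. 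The only cosmetic difference is in the dominating estimates: the paper bounds $\rmE_\beta$ via the $M$-Wright subordination \eqref{eq:LaplaceMbeta} and applies H\"older's inequality with exponents $p=1+1/\varepsilon$, $q=1+\varepsilon$ against the resulting Gaussian factor, whereas you invoke the $O(\lambda^{-2})$ asymptotics of $\rmE_\beta(-\cdot)$ directly, which pairs equally well with the $L^{1+\varepsilon}$ hypotheses on $\lambda\,\cF^{-1}u_0$ and $\lambda^2\,\cF^{-1}u_0$.
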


\begin{proof}
Define $u(t,x) = \int_\R u_0(y) K(t,x,y)\,\rmd y$ for $x\in\R$ and $t>0$. Note that 
\[
	\E_{\mu_\beta}\lb\delta_y(x+B_t^{\alpha,\beta})\rb = \frac{1}{2\pi} \int_\R \e^{\ii\lambda(x-y)}\rmE_\beta\lb-\halb\lambda^2t^\alpha\rb\,\rmd\lambda,
\]
see Proposition 5.6 in \cite{GJRS14}. Equation \eqref{eq:LaplaceMbeta} in the appendix below implies that
\[
	\int_\R \abs{u_0(y) \E_{\mu_\beta}\lb \delta_y(x+B_t^{\alpha,\beta})\rb}\,\rmd y \leq \frac{1}{\sqrt{2\pi t^\alpha}} \int_\R \abs{u_0(y)} \int_0^\infty M_\beta(r) r^{-1/2}\,\rmd r\,\rmd y.
\]
%
%
%Corollary \ref{Cor:Ebeta=LaplaceMbeta} shows that $\rmE_\beta$ is the Laplace transform of $M_\beta$ and thus we obtain
%\begin{align*}
%	&\int_\R \abs{u_0(y) \E_{\mu_\beta}\lb \delta_y(x+B_t^{\alpha,\beta})\rb}\,\rmd y \leq \frac{1}{2\pi} \int_\R\int_\R \abs{u_0(y)} \abs{\rmE_\beta\lb-\halb\lambda^2t^\alpha\rb}\,\rmd\lambda\,\rmd y \\
%	&\leq \frac{1}{2\pi} \int_\R\int_\R\int_0^\infty \abs{u_0(y)} M_\beta(r) \exp\lb-\halb\lambda^2t^\alpha r\rb\,\rmd r\,\rmd\lambda\,\rmd y \\
%	&= \frac{1}{2\pi} \int_\R \abs{u_0(y)} \int_0^\infty M_\beta(r) \sqrt{\frac{2\pi}{t^\alpha r}} \,\rmd r\,\rmd y \\
%	&= \frac{1}{\sqrt{2\pi t^\alpha}} \int_\R \abs{u_0(y)} \int_0^\infty M_\beta(r) r^{-1/2}\,\rmd r\,\rmd y.
%\end{align*} 
Since $\int_0^\infty M_\beta(r) r^{-1/2}\,\rmd r = \pi/\Gamma(1-\alpha/2)$, see e.g.~\cite{MMP10}, we obtain
\begin{align*}
	\int_\R \abs{u_0(y) \E_{\mu_\beta}\lb \delta_y(x+B_t^{\alpha,\beta})\rb}\,\rmd y \leq  \frac{1}{\sqrt{2t^\alpha}\Gamma(1-\alpha/2)} \int_\R \abs{u_0(y)}\,\rmd y <\infty.
\end{align*}
Hence $u$ is well defined and it holds
\begin{align}
	u(t,x) &= \frac{1}{\sqrt{2\pi}} \int_\R \e^{\ii\lambda x} \rmE_\beta\lb-\halb\lambda^2 t^\alpha\rb \lb\frac{1}{\sqrt{2\pi}} \int_\R u_0(r) \e^{-\ii\lambda y}\,\rmd y\rb\,\rmd\lambda \nonumber \\
	&= \frac{1}{\sqrt{2\pi}} \int_\R \e^{\ii\lambda x} \rmE_\beta\lb-\halb\lambda^2 t^\alpha\rb \lb\cF^{-1}u_0\rb(\lambda)\,\rmd\lambda.
\end{align}
Denote the integrand by $f(x,\lambda)$, $x,\lambda\in\R$. Then $f(x,\cdot)\in L^1(\R)$ for all $x\in\R$ and $f(\cdot,\lambda)$ is differentiable for all $\lambda\in\R$ with 
\[
	\abs{\partx f(x,\lambda)} = \abs{\lambda} \abs{\rmE_\beta\lb-\halb\lambda^2 t^\alpha\rb \lb\cF^{-1}u_0\rb(\lambda)}:=g(\lambda).
\]
We show that $g\in L^1(\R,\rmd x)$. With equation \eqref{eq:LaplaceMbeta} below we estimate $g$ as follows:
\begin{align*}
	\int_\R g(\lambda)\,\rmd\lambda \leq \int_0^\infty M_\beta(r) \int_\R \abs{(\cF^{-1}u_0)(\lambda) \lambda} \exp\lb-\halb\lambda^2 t^\alpha r\rb\,\rmd\lambda\,\rmd r. 
\end{align*} 
Next we use H\"older's inequality. Choose $p=1+1/\varepsilon$ and $q=p/(p-1)=1+\varepsilon$. Then $1/p+1/q=1$ and 
\begin{align*}
	&\int_\R \abs{(\cF^{-1}u_0)(\lambda) \lambda} \exp\lb-\halb\lambda^2 t^\alpha r\rb\,\rmd\lambda \\
	&\leq \lb\int_\R \abs{(\cF^{-1}u_0)(\lambda) \lambda}^{1+\varepsilon}\,\rmd\lambda\rb^{\tfrac{1}{1+\varepsilon}} \lb\int_\R \exp\lb-\halb p\lambda^2t^\alpha r\rb\,\rmd\lambda\rb^{1/p} \\
	&= \lb\int_\R \abs{(\cF^{-1}u_0)(\lambda) \lambda}^{1+\varepsilon}\,\rmd\lambda\rb^{\tfrac{1}{1+\varepsilon}} \sqrt{\frac{2\pi}{p t^\alpha}}^{1/p} r^{-1/(2p)}.
\end{align*}
%\begin{align*}
%	&\int_\R g(\lambda)\,\rmd\lambda \\ 
%	&\leq \int_0^\infty M_\beta(r) \lb\int_\R \abs{(\cF^{-1}u_0)(\lambda) \lambda}^{1+\varepsilon}\,\rmd\lambda\rb^{\tfrac{1}{1+\varepsilon}} \lb\int_\R \exp\lb-\halb p\lambda^2t^\alpha r\rb\,\rmd\lambda\rb^{1/p}\,\rmd r \\
%	&\leq \lb\int_\R \abs{(\cF^{-1}u_0)(\lambda) \lambda}^{1+\varepsilon}\,\rmd\lambda\rb^{1/(1+\varepsilon)} \sqrt{\frac{2\pi}{p t^\alpha}}^{1/p} \int_0^\infty M_\beta(r) r^{-1/(2p)}\,\rmd r <\infty.
%\end{align*}
Due to the condition on $u_0$ together with Lemma A.2 in \cite{GJRS14} it follows that $g\in L^1(\R,\rmd x)$ and we may interchange derivative and integral and obtain
\begin{align*}
	&\partx \int_\R \e^{\ii\lambda x} \rmE_\beta\lb-\halb\lambda^2 t^\alpha\rb \lb\cF^{-1}u_0\rb(\lambda)\,\rmd\lambda \\ 
	&\quad\quad = \int_\R \ii\lambda \e^{\ii\lambda x} \rmE_\beta\lb-\halb\lambda^2 t^\alpha\rb \lb\cF^{-1}u_0\rb(\lambda)\,\rmd\lambda.
\end{align*}
With similar arguments we can show that also the second derivative interchanges with the integral. Thus
\[
	\partxx u(t,x) = -\frac{1}{\sqrt{2\pi}} \int_\R \lambda^2 \e^{\ii\lambda x} \rmE_\beta\lb-\halb\lambda^2 t^\alpha\rb (\cF^{-1}u_0)(\lambda)\,\rmd\lambda.
\]
The assertion follows by using Lemma \ref{Lem:eqEbeta} that
%\begin{align*}
%	&\halb \lb I^\beta_{0+} \partxx u((\cdot)^{\beta/\alpha},x)\rb(t^{\alpha/\beta}) \\
%	&= -\halb \frac{1}{\Gamma(\beta)}  \int_0^{t^{\alpha/\beta}} (t^{\alpha/\beta}-s)^{\beta-1} \frac{1}{\sqrt{2\pi}} \int_\R \lambda^2 \e^{\ii\lambda x} \rmE_\beta\lb-\halb \lambda^2s^\beta\rb (\cF^{-1}u_0)(\lambda)\,\rmd\lambda\,\rmd s \\
%	&=  \int_\R \frac{\e^{\ii\lambda x}}{\sqrt{2\pi}}(\cF^{-1}u_0)(\lambda) \lb-\frac{\alpha\lambda^2}{2\beta\Gamma(\beta)} \int_0^t (t^{\alpha/\beta}-s^{\alpha/\beta})^{\beta-1} s^{\alpha/\beta-1} \rmE_\beta\lb -\halb\lambda^2s^\alpha\rb\,\rmd s\rb\,\rmd\lambda \\
%	&= \frac{1}{\sqrt{2\pi}} \int_\R \e^{\ii\lambda x} (\cF^{-1}u_0)(\lambda) \lb\rmE_\beta\lb -\halb\lambda^2t^\alpha\rb -1\rb\,\rmd\lambda \\
%	&= -u_0(x) + u(t,x).
%\end{align*}
%This ends the proof.
\end{proof}

\begin{remark}
The series expansion of $u$ for the case $\alpha=\beta$ is calculated in Appendix \ref{Sec:H-function}, below, as:
\begin{align*}
	u(t,x)& = \frac{1}{\sqrt{2\pi t^\alpha}} \sum_{k=0}^\infty \frac{(-1)^k}{k!} \frac{\Gamma(1/2-k)}{\Gamma(1-1/2\alpha-\alpha k)} \lb\frac{x^2}{2t^\alpha}\rb^k  \\ &\quad\quad + \frac{x}{2t^\alpha\sqrt{\pi}} \sum_{k=0}^\infty \frac{(-1)^k}{k!} \frac{\Gamma(-1/2-k)}{\Gamma(1-\alpha-\alpha k)}\lb\frac{x^2}{2t^\alpha}\rb^k, \quad t>0,\;x\in\R.
\end{align*}
Comparing this to \eqref{eq:SGreen} we see that our approach is equivalent to \cite{Mai95}.
\end{remark}

\begin{remark}\label{Rem:FFK}
Define for $t>0$ and $x\in\R$ the function $u(t,x) = \E_{\mu_\beta}(\delta(x+B_t^{\alpha,\beta}))$. Then it can be shown that $u$ solves the equation
\[
	u(t,x) = \delta_0(x) + \halb \int_0^{t^{\alpha/\beta}} \frac{(t^{\alpha/\beta}-s)^{\beta-1}}{\Gamma(\beta)} \partxx u(s^{\beta/\alpha},x)\,\rmd s
\]
in the distribution space $\cS'(\R)$. Note that 
\begin{align*}
	u(t,x) &= \frac{1}{2\pi} \int_\R \e^{\ii\lambda x}\rmE_\beta\lb-\halb\lambda^2t^\alpha\rb\,\rmd\lambda.
\end{align*}
Inserting the result from Lemma \ref{Lem:eqEbeta} we achieve:
\begin{align*}
	u(t,x) &= \frac{1}{2\pi} \int_\R \e^{\ii\lambda x}\lb 1- \frac{\lambda^2}{2}\int_0^{t^{\alpha/\beta}} \frac{(t^{\alpha/\beta}-s)^{\beta-1}}{\Gamma(\beta)} \rmE_\beta\lb-\halb\lambda^2s^\beta\rb\,\rmd s \rb \,\rmd\lambda \\
	&= \frac{1}{2\pi} \int_\R \e^{\ii\lambda x}\,\rmd\lambda \\ &\quad\quad\quad + \halb\int_0^{t^{\alpha/\beta}} \frac{(t^{\alpha/\beta}-s)^{\beta-1}}{\Gamma(\beta)} \partxx\lb\frac{1}{2\pi}\int_\R \e^{\ii\lambda x}\rmE_\beta\lb -\halb\lambda^2s^\beta\rb\,\rmd\lambda\rb\,\rmd s \\
	&= \delta_0(x) + \halb \int_0^{t^{\alpha/\beta}} \frac{(t^{\alpha/\beta}-s)^{\beta-1}}{\Gamma(\beta)} \partxx u(s^{\beta/\alpha},x)\,\rmd s.
\end{align*}
The calculation and the involving divergent integrals can be treated rigorously by applying them to a test function $\xi\in\cS(\R)$.
\end{remark}

\begin{remark}
For the special case of fractional Brownian motion we set $\beta=1$. Then we obtain that 
\[
	K(t,x,y) = \E_{\mu_1} \lb \delta_y(x+B_t^{\alpha,1})\rb,\quad t>0,\;x,y\in\R,
\]
is a solution to
\begin{align*}
	u(t,x) &= u_0(x) + \halb \int_0^{t^{\alpha}} \partxx u(s^{1/\alpha},x)\,\rmd s  \\
	&= u_0(x) + \halb \int_0^t \alpha s^{\alpha-1} \partxx u(s,x) \,\rmd s,
\end{align*}
or equivalently
\begin{equation}\label{eq:eqfbm}
	\partt u(t,x) = \halb \alpha t^{\alpha-1} \partxx u(t,x).
\end{equation}
In the case of the Gaussian measure $\mu_1$ the $T_{\mu_1}$-transform of Donsker's delta can be calculated explicitly: 
\begin{align*}
	K(t,x,y) &= \lb T_{\mu_1} \delta_y(x+B_t^{\alpha,1}) \rb (0) 
%	&= \frac{1}{2\pi} \int_\R \e^{i\lambda (x-y)} \lb T_{\mu_1} \e^{i\lambda B_t^{\alpha,1}}\rb(0) \,\rmd\lambda \\
%	&= \frac{1}{2\pi} \int_\R \e^{i\lambda (x-y)} \exp\lb-\halb \lambda^2 t^\alpha\rb\,\rmd\lambda \\ 
	= \frac{1}{\sqrt{2\pi t^\alpha}} \exp\lb-\frac{(x-y)^2}{2t^\alpha}\rb.
\end{align*}
It can be easily verified that $K$ is indeed a solution to \eqref{eq:eqfbm}.
\end{remark}

\section{Grey noise process}\label{Sec:gnp}
We have seen in the previous section that generalized grey Brownian motion $B_t^{\alpha,\beta}$ plays a central role in the context of the time-fractional heat equation. This motivates the further analysis in the following section. We first calculate the $S_{\mu_\beta}$-transform of $B_t^{\alpha,\beta}$. Then we prove that the local times of generalized grey Brownian motion exist.

In the following we calculate the $S_{\mu_\beta}$-transform of a generalized grey Brownian motion $B_t^{\alpha,\beta} = \lab\cdot,M^{\alpha/2}_-\ind\rab\in L^2(\mu_\beta)$ for $\varphi = \varphi_1+i\varphi_2\in\cU_\beta \subset \cS_\C(\R)$. Here, $\cU_\beta = \lcb \varphi\in\cN_\C \mid \abs{\lab\varphi,\varphi\rab} < \varepsilon_\beta \rcb$, where $\varepsilon_\beta >0$ is chosen such that $\rmE_\beta(z) > 0$ for all $\abs{z}<\varepsilon_\beta$, $z\in\C$. Then the $S_{\mu_\beta}$-transform of $B_t^{\alpha,\beta}$ is well-defined and it holds that
\begin{align*}
	\lb S_{\mu_\beta} \gBm \rb (\varphi) &= \frac{1}{\rmE_\beta\lb\halb\lab\varphi,\varphi\rab\rb} \int_{\cS'(\R)} \lab\omega,M^{\alpha/2}_-\ind\rab\e^{\lab\omega,\varphi\rab}\,\rmd\mu_\beta(\omega) \\
	&= \frac{1}{\rmE_\beta\lb\halb\lab\varphi,\varphi\rab\rb} \int_{\cS'(\R)} \frac{\rmd}{\rmd s} \e^{\lab\omega,\varphi\rab+s\lab\omega,M^{\alpha/2}_-\ind\rab} \Big|_{s=0}\,\rmd\mu_\beta(\omega).
\end{align*}
For $\omega\in\cS'(\R)$ and $s\in [-1,1]$ denote $f(\omega,s) := \exp(\lab\omega,\varphi+sM^{\alpha/2}_-\ind\rab)$. Then $f$ is integrable with respect to $\omega$ and differentiable with respect to $s$ and 
\[
	\frac{\rmd}{\rmd s} f(\omega,s) = \lab\omega,M^{\alpha/2}_-\ind\rab \e^{\lab\omega,\varphi+sM^{\alpha/2}_-\ind\rab}.
\]
Moreover we have for all $s\in [-1,1]$ and for all $\omega\in\cS'(\R)$ the estimate
\begin{align*}
	\abs{\frac{\rmd}{\rmd s}f(\omega,s)} &\leq \abs{\lab\omega,M^{\alpha/2}_-\ind\rab} \e^{\lab\omega,\varphi_1\rab} \e^{\abs{\lab\omega,M^{\alpha/2}_-\ind\rab}} \\ 
	&\leq \e^{\lab\omega,\varphi_1\rab} \exp\lb 2\abs{\lab\omega,M^{\alpha/2}_-\ind\rab}\rb =:g(\omega).
\end{align*}
It is shown in Lemma 4.1 in \cite{GJRS14} that both mappings $\exp(\lab\cdot,\varphi_1\rab)$ and $\exp\lb 2\abs{\lab\cdot,M^{\alpha/2}_-\ind\rab}\rb$ are in $\in L^2(\mu_\beta)$. Hence we have that $g\in L^1(\mu_\beta)$ by the H\"older inequality. Thus interchanging derivative and integral is possible. Since $\mu_\beta$ satisfies the assumption (A1) on page \pageref{Assumptions} we get
\begin{align*}
	&\lb S_{\mu_\beta} B_t^{\alpha,\beta}\rb (\varphi) \\ &\quad\quad = \frac{1}{\rmE_\beta\lb\halb\lab\varphi,\varphi\rab\rb}\frac{\rmd}{\rmd s} \rmE_\beta\lb \halb \lab\varphi+sM^{\alpha/2}_-\ind,\varphi+sM^{\alpha/2}_-\ind\rab\rb\Big|_{s=0},
\end{align*}
see Corollary 4.3 in \cite{GJRS14}. Using Lemma \ref{derEbeta} and evaluating at $s=0$ we obtain
\[
	\lb S_{\mu_\beta} B_t^{\alpha,\beta}\rb (\varphi) = \lab\varphi,M^{\alpha/2}_-\ind\rab \frac{\rmE_{\beta,\beta}\lb\halb\lab\varphi,\varphi\rab\rb}{\beta \rmE_\beta(\halb\lab\varphi,\varphi\rab)}.
\]
We may use the integration by parts formula \eqref{eq:Intbyparts1} in the appendix below in the case $1<\alpha<2$ since $\varphi$ and $\ind$ are in every $L^p(\R,\rmd x)$, $p\geq 1$. In the case $0<1<\alpha$ we are allowed to use \eqref{eq:Intbyparts2} in the appendix below, since $D^{(1-\alpha)/2}_\pm\varphi\in L^2(\R,\rmd x)$ and $D^{(1-\alpha)/2}_\pm\ind\in L^p(\R,\rmd x)$ for $p=2/(1+2\alpha)$ by Corollary \ref{Cor:integrable}, below. Thus we get
\begin{align}\label{eq:gBmStrafo}
	\lb S_{\mu_\beta} B_t^{\alpha,\beta}\rb (\varphi) &= \lab M^{\alpha/2}_+\varphi,\ind\rab \frac{\rmE_{\beta,\beta}\lb\halb\lab\varphi,\varphi\rab\rb}{\beta \rmE_\beta(\halb\lab\varphi,\varphi\rab)} \nonumber\\
	&= \frac{\rmE_{\beta,\beta}\lb\halb\lab\varphi,\varphi\rab\rb}{\beta \rmE_\beta(\halb\lab\varphi,\varphi\rab)} \int_0^t \lb M^{\alpha/2}_+\varphi\rb(x)\,\rmd x.
\end{align}

\begin{remark}
In the white noise case $\alpha=\beta=1$ we obtain
\[
	\lb S_{\mu_1} B_t^{1,1}\rb(\varphi) = \lab\varphi,\ind\rab = \int_0^t \varphi(x) \,\rmd x
\]
which is the $S$-transform of Brownian motion.
\end{remark}

\begin{theorem}\label{Theo:Diff}
Each generalized grey Brownian motion $(0,\infty)\ni t\mapsto \gBm\in (\cS)^{-1}_{\mu_\beta}$ for $0<\alpha<2$, $0<\beta<1$, is differentiable, i.e.
	\[
		\lim_{h\to 0} \frac{B^{\alpha,\beta}_{t+h}-\gBm}{h}
	\]
	exists and converges to some element in $(\cS)^{-1}_{\mu_\beta}$ denoted by $N_t^{\alpha,\beta}$ fulfilling
	\[
		\lb S_{\mu_\beta} N_t^{\alpha,\beta}\rb(\varphi) = \lb M^{\alpha/2}_+\varphi\rb(t) \frac{\rmE_{\beta,\beta}\lb\halb\lab\varphi,\varphi\rab\rb}{\beta \rmE_\beta(\halb\lab\varphi,\varphi\rab)},
	\]
for every $\varphi$ from a suitable neighborhood $\cU\subset\cN_\C$ of zero.
\end{theorem}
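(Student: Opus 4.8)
The plan is to reduce the differentiability, which is a limit over the continuous parameter $h$, to a statement about arbitrary sequences and then apply the characterization of convergent sequences, Theorem \ref{charconv}. Fix $t>0$, let $(h_n)_{n\in\N}$ be any null sequence with $h_n\neq 0$ (and $\abs{h_n}$ small enough that $t+h_n>0$), and set $\Phi_n := \lb B^{\alpha,\beta}_{t+h_n}-B^{\alpha,\beta}_t\rb/h_n\in(\cS)^{-1}_{\mu_\beta}$. By linearity of the $S_{\mu_\beta}$-transform and the explicit formula \eqref{eq:gBmStrafo}, for $\varphi$ in a suitable neighborhood of zero one has
\[
	\lb S_{\mu_\beta}\Phi_n\rb(\varphi) = \frac{\rmE_{\beta,\beta}\lb\halb\lab\varphi,\varphi\rab\rb}{\beta\rmE_\beta\lb\halb\lab\varphi,\varphi\rab\rb}\cdot\frac{1}{h_n}\int_t^{t+h_n}\lb M^{\alpha/2}_+\varphi\rb(x)\,\rmd x.
\]
It then suffices to verify properties \textit{(i)} and \textit{(ii)} of Theorem \ref{charconv} on a common $U_{p,q}$ with constants uniform in $n$; since the resulting strong limit will be independent of the chosen null sequence (it is pinned down by its $S_{\mu_\beta}$-transform), the full limit as $h\to 0$ exists and may be identified.

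The technical heart of the argument is a mapping estimate for $M^{\alpha/2}_+$. Starting from the Fourier representation $\widetilde{M^{\alpha/2}_+\varphi}(\xi)=\tilde\varphi(\xi)(-\ii\xi)^{(1-\alpha)/2}$ (cf.\ the computations around Lemma \ref{Lemma:sps}), whose multiplier has modulus $\abs{\xi}^{(1-\alpha)/2}$, Fourier inversion yields the $x$-independent bound
\[
	\sup_{x\in\R}\abs{\lb M^{\alpha/2}_+\varphi\rb(x)}\leq\frac{1}{\sqrt{2\pi}}\int_\R\abs{\tilde\varphi(\xi)}\,\abs{\xi}^{(1-\alpha)/2}\,\rmd\xi.
\]
Here the exponent satisfies $(1-\alpha)/2>-1/2$, so the weight is locally integrable at the origin, and since $\tilde\varphi$ is a Schwartz function the integral converges. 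As a byproduct $M^{\alpha/2}_+\varphi$ is continuous, being the inverse Fourier transform of an $L^1$ function. The crucial claim is that the right-hand side is dominated by a Hilbertian norm of the white noise scale, i.e.\ $\norm{M^{\alpha/2}_+\varphi}_\infty\leq C\abs{\varphi}_p$ for $p$ large enough. I expect this norm comparison to be the main obstacle, since it is precisely where the concrete structure of the norms $\abs{\cdot}_p$ in the nuclear triple of Example \ref{ex:wnsetting} must be used.

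Granting this estimate, property \textit{(i)} follows from the continuity of $M^{\alpha/2}_+\varphi$ at $t$: the fundamental theorem of calculus gives $\tfrac{1}{h_n}\int_t^{t+h_n}\lb M^{\alpha/2}_+\varphi\rb(x)\,\rmd x\to\lb M^{\alpha/2}_+\varphi\rb(t)$, so $\lb S_{\mu_\beta}\Phi_n(\varphi)\rb_{n\in\N}$ converges, hence is Cauchy, for every such $\varphi$. For property \textit{(ii)} I would bound the two factors separately on $U_{p,q}$. The prefactor is bounded because $\abs{\lab\varphi,\varphi\rab}\leq\abs{\varphi}_p^2<2^{-2q}$ can be forced small by enlarging $q$, keeping $\rmE_\beta\lb\halb\lab\varphi,\varphi\rab\rb$ near $1$ and in particular away from zero; the averaged integral is controlled uniformly in $h_n$ by $\norm{M^{\alpha/2}_+\varphi}_\infty\leq C\abs{\varphi}_p\leq C\,2^{-q}$. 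Choosing $p$ large enough for the mapping estimate and $q$ large enough for the prefactor produces a single neighborhood $\cU=U_{p,q}$ on which $\abs{S_{\mu_\beta}\Phi_n(\varphi)}$ is bounded uniformly in $n$ and $\varphi$, which is \textit{(ii)}.

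Finally, Theorem \ref{charconv} provides a strong limit $N_t^{\alpha,\beta}\in(\cS)^{-1}_{\mu_\beta}$ of $(\Phi_n)_{n\in\N}$, and by the isomorphism of Theorem \ref{characterization} the $S_{\mu_\beta}$-transforms converge pointwise, so comparison with the limit computed above gives
\[
	\lb S_{\mu_\beta}N_t^{\alpha,\beta}\rb(\varphi)=\lb M^{\alpha/2}_+\varphi\rb(t)\,\frac{\rmE_{\beta,\beta}\lb\halb\lab\varphi,\varphi\rab\rb}{\beta\rmE_\beta\lb\halb\lab\varphi,\varphi\rab\rb}
\]
on $\cU$. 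Since the limit and its $S_{\mu_\beta}$-transform are the same for every null sequence $(h_n)_{n\in\N}$, the difference quotient converges as $h\to 0$ to $N_t^{\alpha,\beta}$, which is the assertion.
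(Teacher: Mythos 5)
Your proposal is correct and follows essentially the same route as the paper's proof: reduce to an arbitrary null sequence $(h_n)_{n\in\N}$, apply the explicit formula \eqref{eq:gBmStrafo}, use continuity of $M^{\alpha/2}_+\varphi$ and the fundamental theorem of calculus for property \textit{(i)}, bound the Mittag-Leffler ratio and the averaged integral uniformly for property \textit{(ii)}, and conclude via Theorem \ref{charconv}. The one estimate you flag as the main obstacle and leave granted, namely $\sup_{x\in\R}\abs{\lb M^{\alpha/2}_+\varphi\rb(x)}\leq C\abs{\varphi}_{p'}$, is precisely what the paper imports from Theorem 2.3 in \cite{Bend1} (which also supplies the continuity of $M^{\alpha/2}_+\varphi$), so your identification of the crux matches the paper's citation, and your Fourier-multiplier sketch is a plausible route to that bound.
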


\begin{proof}
Consider for $n\in\N$ and $t>0$
\[
	\Phi_n := \frac{B^{\alpha,\beta}_{t+h_n}-\gBm}{h_n}\in (\cS)^{-1}_{\mu_\beta},
\]
for a sequence $(h_n)_{n\in\N}$ such that $h_n\to 0$. By \eqref{eq:gBmStrafo}
\[
	S_{\mu_\beta}\Phi_n(\varphi) =  \frac{1}{h_n} \int_t^{t+h_n} \lb M^{\alpha/2}_+\varphi\rb(x)\,\rmd x \frac{\rmE_{\beta,\beta}\lb\halb\lab\varphi,\varphi\rab\rb}{\beta \rmE_\beta(\halb\lab\varphi,\varphi\rab)},
\]
for every $\varphi\in\cU_\beta$. Since $M^{\alpha/2}_+\varphi$ is continuous, see \cite{Bend1}, we have 
\[
	\lim_{n\to \infty} \frac{1}{h_n} \int_t^{t+h_n} \lb M^{\alpha/2}_+\varphi\rb(x)\,\rmd x \frac{\rmE_{\beta,\beta}\lb\halb\lab\varphi,\varphi\rab\rb}{\beta \rmE_\beta(\halb\lab\varphi,\varphi\rab)} = \lb M^{\alpha/2}_+\varphi\rb(t) \frac{\rmE_{\beta,\beta}\lb\halb\lab\varphi,\varphi\rab\rb}{\beta \rmE_\beta(\halb\lab\varphi,\varphi\rab)}.
\]
Thus $(S_{\mu_\beta}\Phi_n(\varphi))_{n\in\N}$ is a Cauchy sequence. Since the Mittag-Leffler functions are holomorphic there are $p,q\in\N$ such that 
\[
	\abs{\frac{\rmE_{\beta,\beta}\lb\halb\lab\varphi,\varphi\rab\rb}{\beta \rmE_\beta(\halb\lab\varphi,\varphi\rab)}} \leq K,\quad \text{for all }\varphi\in U_{p,q}.
\]
Furthermore it follows from Theorem 2.3 in \cite{Bend1} that there is $p'\in\N$ such that for all $x\in\R$
\[
	\abs{\lb M^{\alpha/2}_+\varphi\rb(x)} \leq C\abs{\varphi}_{p'}.
\]
Here, $(\abs{\cdot}_p)_{p\in\N}$ denotes the family of norms which defines the topology of $\cS(\R)$, see Example \ref{ex:wnsetting}. Choose $p^*>\max(p,p')$. By the mean value theorem find $s\in[t,t+h]$ such that for all $\varphi\in U_{p^*,q}$ and for all $n\in\N$
\begin{align*}
	&\abs{\frac{S_{\mu_\beta}\lb B_{t+h_n}^{\alpha,\beta}-B_t^{\alpha,\beta}\rb(\varphi)}{h_n}} \leq \abs{\frac{\rmE_{\beta,\beta}\lb\halb\lab\varphi,\varphi\rab\rb}{\beta \rmE_\beta(\halb\lab\varphi,\varphi\rab)}}\, \abs{\lb M^{\alpha/2}_+\varphi\rb(s)} \leq K\,C\abs{\varphi}_{p'}.
\end{align*}
Finally, applying Theorem \ref{charconv}, we see that $\lb\Phi_n\rb_{n\in\N}$ converges to the distribution $N^{\alpha,\beta}_t$ in $(\cS)^{-1}_{\mu_\beta}$ fulfilling
\[
	\lb S_{\mu_\beta} N^{\alpha,\beta}_t\rb(\varphi) = \lim_{n\to\infty} S_{\mu_\beta}\Phi_n(\varphi),\quad \varphi\in U_{p^*,q}.
\]
\end{proof}

\subsection{Local times of generalized grey Brownian motion}
The ggBm local time, denoted by $L_{\alpha,\beta}(a,T)$, measures the time for $B_t^{\alpha,\beta}$ spending up to time $T>0$ in $a\in\R$. It is already shown in \cite{DSE13} that generalized grey Brownian motion admits a square integrable local time by checking Berman's criteria. In the paper \cite{DSE13} the local time is introduced as the Radon-Nikodym derivative of the occupation measure. In detail: Let $I$ be a measurable set in the interval $[0,T]$, $T>0$, and $f\colon I\to\R$ measurable. The occupation measure $\mu_f$ is defined as
\[
	\mu_f(B) = \int_I \1_B(f(s))\,\rmd s, \quad B\in\cB(\R).
\]
If $\mu_f$ is absolutely continuous with respect to the Lebesgue measure $\rmd x$ we denote the corresponding density by $L^f(\cdot,I)$, i.e.
\[
	\int_I \1_B(f(s))\,\rmd s = \mu_f(B) = \int_B L^f(x,I)\,\rmd x, \quad B\in\cB(\R).
\]
$L^f(\cdot,I)$ is called \emph{local time} of $f$. Informally the local time can be expressed via the Dirac delta distribution. In fact it holds that
\begin{equation}\label{eq:loctime}
	L^f(x,I) = \int_I \delta_x(f(s))\,\rmd s,
\end{equation}
see e.g.~\cite{Kuo}, page 273. 
%This can be seen by the following considerations. First note that for each $B\in\cB(\R)$ we have
%\begin{equation}\label{eq:intdelta}
%	\int_B \delta_x \,\rmd x = \1_B.
%\end{equation}
%Here, the integral is in the sense of Pettis, see Section \ref{Sec:weakint}. Indeed, let $\xi\in\cS(\R)$. Then the mapping $\R\ni x\mapsto \lab\delta_x,\xi\rab = \xi(x)$ is measurable and integrable. Moreover 
%\[
%	\int_B \lab\delta_x,\xi\rab\,\rmd x = \int_B \xi(x)\,\rmd x = \int_\R \1_B(x)\xi(x)\,\rmd x = \lab \1_B,\xi\rab.
%\]
%This shows that for all $B\in\cB$ it holds $\lab\1_B,\xi\rab = \int_B \lab\delta_x,\xi\rab\,\rmd x$. Note that $\1_B\in\cS'(\R)$ since
%\[
%	\int_\R \1_B(x) \abs{\xi(x)}\,\rmd x \leq \int_\R \abs{(1+x)^2\xi(x)} (1+x)^{-2}\,\rmd x \leq \pi\abs{\xi}_{2,0} .
%\]
%Thus \eqref{eq:intdelta} is established. Now we see that \eqref{eq:loctime} indeed gives the density of the occupation measure $\mu_f$:
%\[
%	\int_B L^f(x,I) \,\rmd x = \int_B \int_I \delta_x(f(s))\,\rmd s\,\rmd x = \int_I \int_B \delta_x(f(s))\,\rmd x\,\rmd s. 
%\]
%Using \eqref{eq:intdelta} we obtain
%\[
%	\int_B \int_I \delta_x(f(s))\,\rmd s\,\rmd x = \int_I \1_B(f(s))\,\rmd s = \mu_f(B),
%\]
%and \eqref{eq:loctime} is shown.
Motivated by \eqref{eq:loctime} we give a second approach for constructing the ggBm local time $L_{\alpha,\beta}(a,T)$. For $T>0$ and $a\in\R$ we define
\[
	L_{\alpha,\beta}(a,T) := \int_0^T \delta_a(B_t^{\alpha,\beta})\,\rmd t \in (\cS)^{-1}_{\mu_\beta}
\]
 	
\begin{theorem}\label{Theo:LocalTimes}
The local time $L_{\alpha,\beta}(a,T)$ of generalized grey Brownian motion with $0<\alpha<2$, $0<\beta<1$, $0<T<\infty$ and $a\in\R$ exists as a weak integral in $(\cS)^{-1}_{\mu_\beta}$ with $T_{\mu_\beta}$-transform
\[
	\lb T_{\mu_\beta} L_{\alpha,\beta}(a,T)\rb(\xi) = \int_0^T \lb T_{\mu_\beta}\delta_a(\gBm)\rb(\xi)\,\rmd t
\]
for all $\xi$ from a suitable neighborhood $\cU\subset \cS_\C(\R)$ of zero.
\end{theorem}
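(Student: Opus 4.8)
The plan is to realize $L_{\alpha,\beta}(a,T)$ as a weak integral via Theorem \ref{charint}, using the $T_{\mu_\beta}$-version of that characterization granted by the Remark following Theorem \ref{charconv}, applied to the family $\Phi_t := \delta_a(\gBm)$ over the measure space $\lb[0,T],\cB([0,T]),\rmd t\rb$. First I would observe that for each fixed $t>0$ the object $\delta_a(\gBm) = \delta_a(\lab\cdot,M^{\alpha/2}_-\ind\rab)$ is Donsker's delta \eqref{eq:DonskersDelta} evaluated at the argument $\eta = M^{\alpha/2}_-\ind$, which lies in $\cH = L^2(\R,\rmd x)$ by Remark \ref{Rem:propMH}; hence it is a well-defined element of $(\cS)^{-1}_{\mu_\beta}$ and, by Proposition 5.6 in \cite{GJRS14}, its $T_{\mu_\beta}$-transform is
\[
	\lb T_{\mu_\beta}\delta_a(\gBm)\rb(\xi) = \frac{1}{2\pi}\int_\R \e^{-\ii xa}\rmE_\beta\Bigl(-\halb x^2\lab M^{\alpha/2}_-\ind,M^{\alpha/2}_-\ind\rab - \halb\lab\xi,\xi\rab - x\lab\xi,M^{\alpha/2}_-\ind\rab\Bigr)\,\rmd x.
\]
By Corollary \ref{Cor:covariance} with $s=t$ the quadratic coefficient equals $\lab M^{\alpha/2}_-\ind,M^{\alpha/2}_-\ind\rab = t^\alpha$, so that $\abs{M^{\alpha/2}_-\ind} = t^{\alpha/2}$ in $\cH$.

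Condition (i) of Theorem \ref{charint}, the measurability of $t\mapsto \lb T_{\mu_\beta}\delta_a(\gBm)\rb(\xi)$ for fixed $\xi$, follows from continuity: $t\mapsto M^{\alpha/2}_-\ind$ varies continuously in $\cH$, so the integrand depends continuously on $t$ and is dominated in $x$, whence the $x$-integral is continuous in $t$. The substance of the proof is condition (ii): I must exhibit a neighborhood $\cU$ of zero and a constant $C<\infty$ with $\int_0^T \abs{\lb T_{\mu_\beta}\delta_a(\gBm)\rb(\xi)}\,\rmd t \leq C$ for all $\xi\in\cU$. To control the inner integral uniformly in $t$, I would rescale by $u = x\,t^{\alpha/2}$. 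Writing $\eta_t := t^{-\alpha/2}M^{\alpha/2}_-\ind$, which is a \emph{unit} vector in $\cH$, the argument of $\rmE_\beta$ becomes $-\halb u^2 - \halb\lab\xi,\xi\rab - u\lab\xi,\eta_t\rab$, with a quadratic part now independent of $t$, while $\rmd x = t^{-\alpha/2}\,\rmd u$ produces an explicit prefactor. Thus
\[
	\abs{\lb T_{\mu_\beta}\delta_a(\gBm)\rb(\xi)} \leq \frac{t^{-\alpha/2}}{2\pi}\int_\R \Bigl|\rmE_\beta\Bigl(-\halb u^2 - \halb\lab\xi,\xi\rab - u\lab\xi,\eta_t\rab\Bigr)\Bigr|\,\rmd u.
\]
Since $\abs{\lab\xi,\eta_t\rab}\leq \abs{\xi}$ uniformly in $t\in(0,T]$, the remaining $u$-integral is exactly of the form bounded in Proposition 5.2 in \cite{GJRS14} for the unit vector $\eta_t$, and that bound depends on $\eta_t$ only through its norm; hence it is majorized by a constant $C'$ uniform in both $t$ and $\xi\in\cU$, yielding the pointwise estimate $\abs{\lb T_{\mu_\beta}\delta_a(\gBm)\rb(\xi)}\leq C'\,t^{-\alpha/2}$.

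The decisive step is then the observation that $\int_0^T t^{-\alpha/2}\,\rmd t = \tfrac{2}{2-\alpha}\,T^{1-\alpha/2}<\infty$ precisely because $\alpha<2$, which is exactly the standing restriction on the ggBm exponent; this converts the pointwise bound into the uniform estimate required in (ii). With (i) and (ii) verified, Theorem \ref{charint} produces an element $\int_0^T \delta_a(\gBm)\,\rmd t \in(\cS)^{-1}_{\mu_\beta}$ whose $T_{\mu_\beta}$-transform is $\int_0^T \lb T_{\mu_\beta}\delta_a(\gBm)\rb(\xi)\,\rmd t$, which is the assertion. I expect the genuine obstacle to be the integrability of the singularity at $t=0$: the prefactor $t^{-\alpha/2}$ is only borderline integrable, so the argument rests entirely on the scaling substitution producing exactly this power and on the uniform-in-direction version of the Proposition 5.2 estimate, and the hypothesis $\alpha<2$ is used precisely to guarantee this integrability.
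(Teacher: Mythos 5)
Your proposal is correct and follows essentially the same route as the paper: both verify condition (ii) of Theorem \ref{charint} by reducing to the Proposition 5.2 estimate from \cite{GJRS14}, extracting the factor $t^{-\alpha/2}$ (the paper via the Gaussian $x$-integration inside the $M_\beta$-representation, you via the equivalent substitution $u = xt^{\alpha/2}$ and the observation that the bound depends on $\eta_t = t^{-\alpha/2}M^{\alpha/2}_-\1_{[0,t)}$ only through its norm and through $\abs{\xi}_{L^2(\R,\rmd x)}<M$), and both then use $\alpha<2$ to integrate the borderline singularity, obtaining $\tfrac{2}{2-\alpha}T^{1-\alpha/2}$. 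Your explicit treatment of measurability in condition (i) is a minor addition the paper leaves implicit, but the decisive estimate and its use are identical.
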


\begin{proof}
Let $\xi\in\cS_\C(\R)$ with $\abs{\xi}_{L^2(\R,\rmd x)}<M$ for some $M<\infty$. According to Proposition 5.6 in \cite{GJRS14} with $\eta=M^{\alpha/2}_-\ind\in L^2(\R,\rmd x)$ we have
\begin{align*}
	&\lb T_{\mu_\beta}\delta_a(B_t^{\alpha,\beta})\rb(\xi) \\ &= \frac{1}{2\pi} \int_\R \exp(-\ii xa) \rmE_\beta\lb-\halb x^2t^\alpha - \halb\lab\xi,\xi\rab - x\lab\xi,M^{\alpha/2}_-\ind\rab\rb\,\rmd x.
\end{align*}
The same calculation as in the proof of Proposition 5.2 in \cite{GJRS14} shows that
\begin{align*}
	&\int_0^T \abs{T_{\mu_\beta}\delta_a(B_t^{\alpha,\beta})(\xi)}\,\rmd t \\
	&\leq \frac{1}{\sqrt{2\pi}} \int_0^T \int_0^\infty M_\beta(r) r^{-1/2} t^{-\alpha/2} \exp\lb-\halb r z(t,\xi) \rb \,\rmd r\,\rmd t,
\end{align*}
with $z(t,\xi) = \lab\xi_1,\xi_1\rab-\lab\xi_2,\xi_2\rab - \frac{\lab\xi_1,M^{\alpha/2}_-\ind\rab^2}{t^\alpha}>-M^2$ since $\abs{\xi}_{L^2(\R,\rmd x)}<M$ and thus
\begin{align*}
	&\int_0^T \abs{T_{\mu_\beta}\delta_a(B_t^{\alpha,\beta})(\xi)}\,\rmd t \\
	&\leq \frac{1}{\sqrt{2\pi}} \frac{2}{2-\alpha}T^{1-\alpha/2} \int_0^\infty M_\beta(r) r^{-1/2}  \exp\lb \frac{1}{2}M^2r \rb \,\rmd r.
\end{align*}
By Lemma A.2 in \cite{GJRS14} we get
\[
	\int_0^T \abs{T_{\mu_\beta}\delta_a(B_t^{\alpha,\beta})(\xi)}\,\rmd t \leq \frac{K}{\sqrt{2\pi}} \frac{2}{2-\alpha}T^{1-\alpha/2}
\]
for some constant $K<\infty$. The assertion follows by Theorem \ref{charint}.
\end{proof}

\begin{remark}
The expectation of $L_{\alpha,\beta}(0,T)$ can be calculated explicitly:
\begin{align*}
	&\E(L_{\alpha,\beta}(T,0)) = \int_0^T \E(\delta_0(B_t^{\alpha,\beta}))\,\rmd t \\ &= \int_0^T \frac{1}{\sqrt{2\pi t^\alpha}}\frac{1}{\Gamma(1-\beta/2)}\,\rmd t = \frac{1}{\Gamma(1-\beta/2)\sqrt{2\pi}} \frac{2}{2-\alpha} T^{1-\alpha/2}.
\end{align*}
This result extends the existence result for ggBm local time in \cite{DSE13}.
\end{remark}

\section*{Acknowledgments}
We would like to thank Maria Jo\~ao Oliveira and Rui Vilela Mendes for motivating and inspiring discussions. We also want to thank Jos\'e Luis da Silva and Felix Riemann for fruitful discussions. Florian Jahnert gratefully acknowledges financial support in the form of fellowships from the Department of Mathematics of the University of Kaiserslautern and from the ``Stipendienstiftung Rheinland-Pfalz''.

\newpage

\appendix

\section{Basics of fractional calculus}\label{Sec:Frac}
In this chapter we introduce the basic notations of fractional calculus, for a detailed overview and proofs we refer to \cite{SKM}. First of all we give the definition of the Riemann-Liouville integral and derivative. Besides that there are a couple of other definitions. Mostly common in physics is the fractional derivative in the sense of Caputo. We also consider the Marchaud fractional derivative. This derivative appears in the context of fractional Brownian motion in the white noise setting. In general, these three fractional derivatives are different. But we prove that they coincide on $\cS(\R)$.

\subsection{Riemann-Liouville fractional integral and derivative}
We first give the definitions of the fractional integral and derivative in the sense of Riemann-Liouville for functions $f$ which are defined on an interval $[a,b]$ for $-\infty < a<b<\infty$.

\begin{theorem}[\cite{SKM}, Theorem 3.5]\label{Theo:RLint}
The Riemann-Liouville fractional integrals of order $0<\alpha<1$ of $f\in L^p([a,b],\rmd x)$, defined by
\begin{align}
	\lb I^\alpha_{a+}f\rb(x) &:= \frac{1}{\Gamma(\alpha)} \int_a^x f(t) (x-t)^{\alpha-1}\,\rmd t,\quad x\in[a,b], \label{eq:lsInt}\\
	\lb I^\alpha_{b-}f\rb(x) &:= \frac{1}{\Gamma(\alpha)} \int_x^b f(t) (t-x)^{\alpha-1}\,\rmd t,\quad x\in[a,b], \tag{\ref{eq:lsInt}$'$} \label{eq:rsInt}
\end{align}
exist if $1\leq p<1/\alpha$. Moreover $I^\alpha_{a+}f \in L^q([a,b],\rmd x)$, where $1\leq q < \frac{p}{1-\alpha p}$. If $1<p<1/\alpha$ then $I^\alpha_{0+}f \in L^q([a,b],\rmd x)$ for $q=\frac{p}{1-\alpha p}$. \eqref{eq:lsInt} and \eqref{eq:rsInt} are called left-sided and right-sided Riemann-Liouville fractional integral, respectively.
\end{theorem}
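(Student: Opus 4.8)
The plan is to recognize the operator as a convolution and then to read off its mapping properties from the integrability of its kernel. First I would extend $f$ by zero outside $[a,b]$ and write, for $x\in[a,b]$,
\[
	\lb I^\alpha_{a+}f\rb(x) = \frac{1}{\Gamma(\alpha)}\int_\R f(t)\,k_\alpha(x-t)\,\rmd t,
\]
where $k_\alpha(s):=s^{\alpha-1}$ for $0<s\le b-a$ and $k_\alpha(s):=0$ otherwise. The right-sided integral $I^\alpha_{b-}$ is obtained from $I^\alpha_{a+}$ through the reflection $Rf(x):=f(a+b-x)$, which satisfies $I^\alpha_{b-}f=R\lb I^\alpha_{a+}(Rf)\rb$ and is an isometry of every $L^p([a,b],\rmd x)$; hence it suffices to treat $I^\alpha_{a+}$. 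The decisive feature is the integrability of $k_\alpha$: since $\int_0^{b-a}s^{(\alpha-1)r}\,\rmd s<\infty$ exactly when $r<1/(1-\alpha)$, one has $k_\alpha\in L^r([0,b-a],\rmd x)$ for every $1\le r<1/(1-\alpha)$, while on the critical exponent $k_\alpha$ lies only in the weak space $L^{1/(1-\alpha),\infty}$, because $\abs{\lcb s:s^{\alpha-1}>\lambda\rcb}=\lambda^{-1/(1-\alpha)}$.

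For the open range $1\le q<p/(1-\alpha p)$ I would apply Young's inequality for convolutions. Since $[a,b]$ has finite measure and $k_\alpha\in L^1$, Young already gives $I^\alpha_{a+}f\in L^p$, and the inclusion $L^p([a,b],\rmd x)\subset L^q([a,b],\rmd x)$ disposes of all $q\le p$; it therefore remains to treat $p\le q<p/(1-\alpha p)$. For such $q$ set $1/r:=1+1/q-1/p$, which lies in $(1-\alpha,1]$: the upper bound comes from $q\ge p$ and the lower bound is equivalent to the hypothesis $q<p/(1-\alpha p)$. Thus $r\in[1,1/(1-\alpha))$ and $k_\alpha\in L^r$, while the relations $1/q=1/p+1/r-1$ and $1/p+1/r=1+1/q\ge 1$ are exactly the hypotheses of Young's inequality, which yields $I^\alpha_{a+}f\in L^q$ together with $\norm{I^\alpha_{a+}f}_{L^q}\le\Gamma(\alpha)^{-1}\norm{k_\alpha}_{L^r}\norm{f}_{L^p}$. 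Running the same estimate with $\abs{f}$ in place of $f$ shows the defining integral converges absolutely for almost every $x$, which is the asserted existence.

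The main obstacle is the sharp endpoint $q=p/(1-\alpha p)$ in the case $1<p<1/\alpha$. Here $1/q=1/p-\alpha$ forces the exponent $r=1/(1-\alpha)$ in the Young relation, and on this exponent $k_\alpha$ is no longer in $L^r$ but only in $L^{1/(1-\alpha),\infty}$, so Young's inequality is unavailable. Instead I would invoke the Hardy--Littlewood--Sobolev inequality: convolution against a kernel in $L^{r,\infty}$ is bounded from $L^p$ to $L^q$ whenever $1<p<q<\infty$ and $1/q=1/p+1/r-1$. This is the genuinely analytic step, and the standard route is to split $k_\alpha$ into its restrictions to small and large arguments, derive weak-type $(p,q)$ bounds for the resulting pieces, and conclude by the Marcinkiewicz interpolation theorem. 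The exclusion of $p=1$ is forced precisely at this point: for $p=1$ only a weak-type $(1,q)$ estimate survives, not the strong-type bound, which is why the endpoint is admitted solely for $p>1$. Combining the two regimes with the reflection argument then yields the full statement for both $I^\alpha_{a+}$ and $I^\alpha_{b-}$.
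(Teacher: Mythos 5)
This theorem is quoted by the paper from \cite{SKM} (Theorem 3.5) without any proof being given --- the appendix explicitly refers to \cite{SKM} for all proofs --- so there is no internal argument to compare yours against. On its own terms your proof is correct and complete, and it follows the classical Hardy--Littlewood route that underlies the cited result: the reflection identity $I^\alpha_{b-}f = R\lb I^\alpha_{a+}(Rf)\rb$ with $Rf(x)=f(a+b-x)$ is verified by the substitution $t\mapsto a+b-t$ and correctly reduces everything to the left-sided case; the exponent bookkeeping for Young's inequality is right ($1/r = 1+1/q-1/p$ lands in $(1-\alpha,1]$ exactly when $p\le q<p/(1-\alpha p)$, the regime $q\le p$ being absorbed by the finite-measure embedding $L^p([a,b])\subset L^q([a,b])$, and for $p=1$ one simply takes $r=q<1/(1-\alpha)$); running the estimate with $\abs{f}$ does give a.e.\ absolute convergence, which is the existence claim; and at the endpoint $q=p/(1-\alpha p)$, $1<p<1/\alpha$, the weak-type Young/HLS inequality applies since the required exponent $r=1/(1-\alpha)$ exceeds $1$ and $p<q<\infty$, with $p=1$ correctly excluded because only the weak-type $(1,q)$ bound survives there. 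Two cosmetic points: the distribution function of your truncated kernel is $\min\lb\lambda^{-1/(1-\alpha)},\,b-a\rb$ rather than exactly $\lambda^{-1/(1-\alpha)}$, though the inequality is all you use; and the statement's ``$I^\alpha_{0+}f$'' in the endpoint clause is evidently a typo for $I^\alpha_{a+}f$, which your argument covers. Your write-up thus supplies a self-contained proof of a fact the paper only imports, and it buys slightly more than the statement asks for, namely the quantitative bound $\lVert I^\alpha_{a+}f\rVert_{L^q}\le \Gamma(\alpha)^{-1}\lVert k_\alpha\rVert_{L^r}\lVert f\rVert_{L^p}$ in the subcritical range.
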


\begin{theorem}[\cite{SKM}, Lemma 2.2]
Let $f\colon [a,b]\to\R$ be absolutely continuous. Then the left-sided (right-sided) Riemann-Liouville fractional derivative of order $0<\alpha<1$ of $f$ given by 
\begin{align}
	\lb D^\alpha_{a+}f\rb(x) := \frac{1}{\Gamma(1-\alpha)} \frac{\rmd}{\rmd x} \int_a^x f(t) (x-t)^{-\alpha}\,\rmd t,\quad x\in[a,b], \label{eq:lsRLDInt} \\
	\lb D^\alpha_{b-}f\rb(x) := \frac{-1}{\Gamma(1-\alpha)} \frac{\rmd}{\rmd x} \int_x^b f(t) (t-x)^{-\alpha}\,\rmd t,\quad x\in[a,b], \tag{\ref{eq:lsRLDInt}$'$} \label{eq:rsRLDInt}
\end{align}
exists. Moreover $D_{a+}^\alpha f, D^\alpha_{b-}f\in L^p([a,b]\,\rmd x)$ for $1\leq p<1/\alpha$ and
\begin{align*}
	\lb D^\alpha_{a+}f\rb(x) = \frac{1}{\Gamma(1-\alpha)}\lb\frac{f(a)}{(x-a)^\alpha} + \int_a^x f'(t) (x-t)^{-\alpha}\,\rmd t\rb, \\
	\lb D^\alpha_{b-}f\rb(x) = \frac{1}{\Gamma(1-\alpha)}\lb\frac{f(b)}{(b-x)^\alpha} - \int_x^b f'(t) (t-x)^{-\alpha}\,\rmd t\rb.
\end{align*}
\end{theorem}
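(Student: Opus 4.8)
The plan is to reduce the differentiation of the singular integral to the known mapping property of the Riemann--Liouville fractional integral recorded in Theorem \ref{Theo:RLint}. I treat the left-sided operator; the right-sided one follows by the orientation-reversing substitution $x\mapsto a+b-x$. Since $f$ is absolutely continuous on $[a,b]$, I may write $f(t)=f(a)+\int_a^t f'(s)\,\rmd s$ with $f'\in L^1([a,b])$. Substituting this into $\int_a^x f(t)(x-t)^{-\alpha}\,\rmd t$ (which converges because $f$ is bounded and $\alpha<1$) and separating the constant term gives
\[
\int_a^x f(t)(x-t)^{-\alpha}\,\rmd t = f(a)\int_a^x (x-t)^{-\alpha}\,\rmd t + \int_a^x \left(\int_a^t f'(s)\,\rmd s\right)(x-t)^{-\alpha}\,\rmd t.
\]

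First I would evaluate $\int_a^x (x-t)^{-\alpha}\,\rmd t=(x-a)^{1-\alpha}/(1-\alpha)$, and then apply Fubini's theorem to the double integral -- legitimate since $f'\in L^1$ and $(x-t)^{-\alpha}$ is integrable over the triangle $\{a\le s\le t\le x\}$ -- to interchange the order of integration, obtaining $\int_a^x f'(s)\big(\int_s^x (x-t)^{-\alpha}\,\rmd t\big)\,\rmd s=(1-\alpha)^{-1}\int_a^x f'(s)(x-s)^{1-\alpha}\,\rmd s$. Thus, up to the prefactor $1/\Gamma(1-\alpha)$, the quantity to be differentiated in $x$ is $f(a)(x-a)^{1-\alpha}/(1-\alpha)$ plus a constant multiple of $g(x):=\int_a^x f'(s)(x-s)^{1-\alpha}\,\rmd s$.

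The hard part will be differentiating $g$, where both the upper limit and the integrand depend on $x$ and the $x$-derivative of the kernel is singular at $s=x$. The cleanest route is to recognize $g=\Gamma(2-\alpha)\,I_{a+}^{2-\alpha}f'$, a fractional integral of order $2-\alpha>1$ of the $L^1$ function $f'$, and to invoke the standard identity $\frac{\rmd}{\rmd x}I_{a+}^{\gamma}h=I_{a+}^{\gamma-1}h$ for $\gamma>1$ (which follows from the semigroup property $I_{a+}^{\gamma}=I_{a+}^{1}I_{a+}^{\gamma-1}$ together with the fundamental theorem of calculus). This gives $g'(x)=(1-\alpha)\int_a^x f'(s)(x-s)^{-\alpha}\,\rmd s$ for almost every $x$. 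Alternatively one differentiates under the integral sign after observing that the boundary contribution vanishes, because $(x-s)^{1-\alpha}\to 0$ as $s\to x$, and that the differentiated kernel $(1-\alpha)(x-s)^{-\alpha}$ remains integrable. Differentiating the first term yields $f(a)(x-a)^{-\alpha}$, and assembling the pieces with $\Gamma(2-\alpha)=(1-\alpha)\Gamma(1-\alpha)$ produces exactly the claimed formula for $D^\alpha_{a+}f$.

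Finally, for the $L^p$ statement I would read off the two summands separately. The term $\frac{f(a)}{\Gamma(1-\alpha)}(x-a)^{-\alpha}$ lies in $L^p([a,b])$ precisely when $\alpha p<1$, i.e. $p<1/\alpha$. The remaining term equals $I_{a+}^{1-\alpha}f'$, and since $f'\in L^1([a,b])$, Theorem \ref{Theo:RLint} applied with order $1-\alpha$ and exponent $p=1$ shows it belongs to $L^q([a,b])$ for every $1\le q<1/\alpha$. Intersecting the two integrability ranges gives $D^\alpha_{a+}f\in L^p([a,b])$ for $1\le p<1/\alpha$ and in particular establishes existence of the derivative almost everywhere. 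The right-sided formula and its $L^p$ bound then follow verbatim after reflecting the interval.
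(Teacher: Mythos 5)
Your proof is correct. The paper itself gives no argument here—it imports the statement verbatim from \cite{SKM} (Lemma 2.2)—and your route is precisely the standard one found in that source: decompose $f(t)=f(a)+\int_a^t f'(s)\,\mathrm{d}s$ by absolute continuity, apply Fubini to recognize the remainder as a fractional integral of order $2-\alpha$ of $f'\in L^1$, differentiate via the semigroup identity and the Lebesgue fundamental theorem of calculus, and read off the $L^p$ range from the power singularity $(x-a)^{-\alpha}$ together with the mapping property of $I_{a+}^{1-\alpha}$ recorded in Theorem \ref{Theo:RLint}; the reflection $x\mapsto a+b-x$ correctly transfers everything, including the sign in front of $\int_x^b f'(t)(t-x)^{-\alpha}\,\mathrm{d}t$, to the right-sided case.
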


\begin{remark}
For functions $f$ defined on the positive real line, the definitions \eqref{eq:lsInt} and \eqref{eq:lsRLDInt} are valid for all $x\in [0,\infty)$.
\end{remark}

Next we consider functions $f$ which are defined on the real line.

\begin{theorem}[\cite{SKM}, Theorem 5.3]\label{Def:fracint}
For every $f\in L^p(\R,\rmd x)$ with $1\leq p<1/\alpha$ the left-sided (right-sided) Riemann-Liouville fractional integral of order $0<\alpha<1$ is defined as follows: 
\begin{align}
	\lb I^\alpha_{+}f\rb(x) &:= \frac{1}{\Gamma(\alpha)} \int_{-\infty}^x f(t) (x-t)^{\alpha-1}\,\rmd t,\quad x\in\R, \label{eq:lsRL}\\
	\lb I^\alpha_{-}f\rb(x) &:= \frac{1}{\Gamma(\alpha)} \int_x^\infty f(t) (t-x)^{\alpha-1}\,\rmd t,\quad x\in\R. \tag{\ref{eq:lsRL}$'$} \label{eq:rsRL}
\end{align}
Moreover $I^\alpha_{\pm}\colon L^p(\R,\rmd x)\to L^q (\R,\rmd x)$ is continuous for $1<p<1/\alpha$ and $q=\tfrac{p}{1-\alpha p}$.
\end{theorem}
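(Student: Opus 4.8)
The plan is to recognise $I^\alpha_\pm$ as convolution operators and to reduce the assertion to the Hardy--Littlewood maximal inequality. Setting $k_\alpha(s):=s_+^{\alpha-1}/\Gamma(\alpha)$ and substituting $s=x-t$ in \eqref{eq:lsRL} gives
\[
	\lb I^\alpha_+ f\rb(x) = \frac{1}{\Gamma(\alpha)}\int_0^\infty f(x-s)\,s^{\alpha-1}\,\rmd s = \lb f*k_\alpha\rb(x),
\]
while the substitution $s=t-x$ identifies $I^\alpha_-$ with convolution against the reflected kernel $s\mapsto k_\alpha(-s)$. Hence it suffices to treat $I^\alpha_+$; the claim for $I^\alpha_-$ then follows by applying the result to $f(-\,\cdot)$ and reflecting back. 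The feature that makes the statement nontrivial is that $k_\alpha\notin L^1(\R,\rmd x)$: the singularity at the origin is integrable since $\alpha-1>-1$, but the kernel decays only like $s^{\alpha-1}$ at infinity, so ordinary Young's inequality is unavailable. The right replacement is that $k_\alpha$ belongs to the weak Lebesgue space $L^{r,\infty}(\R,\rmd x)$ with $r=1/(1-\alpha)$, and the exponent relation $1/q=1/p+1/r-1=1/p-\alpha$ is precisely $q=p/(1-\alpha p)$.

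First I would establish a pointwise interpolation bound. For $f\in L^p(\R,\rmd x)$ and $\delta>0$ split the integral defining $I^\alpha_+\abs{f}$ at $s=\delta$. A dyadic decomposition of $(0,\delta)$ together with $\int_0^{\delta 2^{-j}}\abs{f(x-s)}\,\rmd s\le \delta 2^{-j}\,(\mathrm{M}f)(x)$, where $\mathrm{M}f$ denotes the Hardy--Littlewood maximal function, bounds the near part by $C\delta^\alpha(\mathrm{M}f)(x)$ after summing the geometric series $\sum_j 2^{-j\alpha}$. H\"older's inequality with conjugate exponent $p'$ bounds the far part by
\[
	\lb\int_\delta^\infty s^{(\alpha-1)p'}\,\rmd s\rb^{1/p'}\norm{f}_{L^p} = C\,\delta^{\alpha-1/p}\,\norm{f}_{L^p},
\]
the integral converging exactly because $p<1/\alpha$ forces $p'>1/(1-\alpha)$, i.e.\ $(\alpha-1)p'<-1$. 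Optimising the bound $C\lb\delta^\alpha(\mathrm{M}f)(x)+\delta^{\alpha-1/p}\norm{f}_{L^p}\rb$ over $\delta$, by choosing $\delta=(\norm{f}_{L^p}/(\mathrm{M}f)(x))^p$, yields
\[
	\lb I^\alpha_+\abs{f}\rb(x) \le C\,\norm{f}_{L^p}^{\alpha p}\,\lb(\mathrm{M}f)(x)\rb^{1-\alpha p}.
\]

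This single inequality gives both conclusions. Since $\mathrm{M}f(x)<\infty$ for almost every $x$ whenever $f\in L^p$ with $p\ge 1$, the right-hand side is finite a.e., so the integral defining $(I^\alpha_+ f)(x)$ converges absolutely for a.e.\ $x\in\R$, which establishes existence on the full range $1\le p<1/\alpha$. For continuity, raise the inequality to the power $q=p/(1-\alpha p)$ and integrate: because $(1-\alpha p)q=p$, integration produces $\norm{I^\alpha_+ f}_{L^q}\le C\,\norm{f}_{L^p}^{\alpha p}\,\norm{\mathrm{M}f}_{L^p}^{1-\alpha p}$, and the Hardy--Littlewood maximal inequality $\norm{\mathrm{M}f}_{L^p}\le C_p\norm{f}_{L^p}$ (valid for $p>1$) makes the exponents add to $1$, so $\norm{I^\alpha_+ f}_{L^q}\le C\norm{f}_{L^p}$. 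The restriction $p>1$ enters precisely here, through the maximal inequality; at $p=1$ only a weak-type bound survives, consistent with the theorem asserting continuity only for $p>1$. The hard part of the argument is exactly this passage from a non-integrable kernel to an $L^p\to L^q$ estimate, i.e.\ the Hardy--Littlewood--Sobolev phenomenon; alternatively one could invoke the generalised Young (O'Neil) inequality directly from $k_\alpha\in L^{1/(1-\alpha),\infty}$, but the maximal-function route keeps the proof self-contained.
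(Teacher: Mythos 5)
Your argument is correct, but note that the paper itself offers no proof of this statement: it is imported verbatim from Samko--Kilbas--Marichev \cite{SKM} (their Theorem 5.3, the Hardy--Littlewood--Sobolev theorem for Riemann--Liouville integrals), so there is no in-paper proof to compare against. What you have written is the standard Hedberg pointwise-interpolation proof, and it checks out in every essential step: the dyadic near-part estimate $C\delta^{\alpha}(\mathrm{M}f)(x)$, the far-part bound $C\delta^{\alpha-1/p}\|f\|_{L^p}$ with convergence exactly when $p<1/\alpha$, the optimization $\delta=(\|f\|_{L^p}/(\mathrm{M}f)(x))^{p}$ giving $(I^{\alpha}_{+}|f|)(x)\le C\|f\|_{L^p}^{\alpha p}((\mathrm{M}f)(x))^{1-\alpha p}$, the exponent bookkeeping $(1-\alpha p)q=p$, and the correct localization of the restriction $p>1$ in the maximal inequality (with only weak-type information surviving at $p=1$, consistent with the theorem asserting $L^p\to L^q$ continuity only for $p>1$). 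This route is more self-contained than the classical treatments (including the one in \cite{SKM}), which typically pass through weak-type estimates plus Marcinkiewicz interpolation, or through the O'Neil/weak-Young inequality that you mention as an alternative. Two small points to tidy: at $p=1$ your displayed H\"older step has conjugate exponent $p'=\infty$, so the far-part integral should be replaced by the trivial bound $\sup_{s\ge\delta}s^{\alpha-1}\cdot\|f\|_{L^1}=\delta^{\alpha-1}\|f\|_{L^1}$, which is the same expression $\delta^{\alpha-1/p}\|f\|_{L^p}$ read at $p=1$ --- you need this case since your a.e.-existence claim covers the full range $1\le p<1/\alpha$; and the degenerate cases $(\mathrm{M}f)(x)=0$ or $\|f\|_{L^p}=0$ (where the optimizing $\delta$ is undefined) should be dismissed as trivial, since either forces $f=0$ a.e.
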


In order to define the fractional derivatives we need the following:
\begin{lemma}[\cite{Mish}, Lemma 1.1.2]
Let $1\leq p<1/\alpha$ and assume that $f\in I^\alpha_\pm \bigl(L^p(\R,\rmd x)\bigr)$. Then there is a unique $\varphi\in L^p(\R,\rmd x)$ such that $f(x)=\lb I^\alpha_{\pm}\varphi\rb(x)$. This $\varphi$ is called Riemann-Liouville fractional derivative of $f$, denoted by $D^\alpha_\pm f$ and given by:
\begin{align}
	\lb D^\alpha_{+}f\rb(x) = \frac{1}{\Gamma(1-\alpha)} \frac{\rmd}{\rmd x} \int_{-\infty}^x f(t) (x-t)^{-\alpha}\,\rmd t,\quad x\in\R, \label{eq:lsRLD} \\
	\lb D^\alpha_{-}f\rb(x) = \frac{-1}{\Gamma(1-\alpha)} \frac{\rmd}{\rmd x} \int_x^\infty f(t) (t-x)^{-\alpha}\,\rmd t,\quad x\in\R. \tag{\ref{eq:lsRLD}$'$} \label{eq:rsRLD}
\end{align}
\end{lemma}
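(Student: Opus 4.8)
Since the hypothesis $f\in I^\alpha_\pm(L^p(\R,\rmd x))$ is precisely the statement that $f=I^\alpha_\pm\varphi$ for some $\varphi\in L^p(\R,\rmd x)$, the existence of a representing $\varphi$ requires nothing beyond unwinding the definition. The real content of the lemma is therefore that $\varphi$ is uniquely determined by $f$ and that it is recovered by the displayed formula. I would treat the left-sided case $D^\alpha_+$ in detail; the right-sided case follows by the reflection $x\mapsto -x$, which interchanges $I^\alpha_+$ and $I^\alpha_-$ and produces the sign in \eqref{eq:rsRLD}. The goal is to show that the operator defined by \eqref{eq:lsRLD} is a left inverse of $I^\alpha_+$ on $L^p(\R,\rmd x)$.

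First I would rewrite the defining expression as a fractional integral of complementary order: since $\frac{1}{\Gamma(1-\alpha)}\int_{-\infty}^x f(t)(x-t)^{-\alpha}\,\rmd t = (I^{1-\alpha}_+ f)(x)$, formula \eqref{eq:lsRLD} reads $D^\alpha_+ f = \frac{\rmd}{\rmd x}(I^{1-\alpha}_+ f)$. The key analytic fact is the semigroup (index) law $I^{1-\alpha}_+ I^\alpha_+\varphi = I^1_+\varphi$, that is, composing the fractional integrals of orders $1-\alpha$ and $\alpha$ yields ordinary integration (see \cite{SKM}). Inserting $f=I^\alpha_+\varphi$ then gives $(I^{1-\alpha}_+ f)(x)=\int_{-\infty}^x\varphi(t)\,\rmd t$. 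I would establish the index law by substituting the definition of $I^\alpha_+\varphi$, interchanging the order of integration over the region $\{t<s<x\}$, and evaluating the inner integral by the Euler Beta integral $\int_t^x (x-s)^{-\alpha}(s-t)^{\alpha-1}\,\rmd s = B(\alpha,1-\alpha)=\Gamma(\alpha)\Gamma(1-\alpha)$, so that the two Gamma factors cancel.

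With $I^{1-\alpha}_+ f$ identified as an indefinite integral of the locally integrable function $\varphi$, it is locally absolutely continuous, and the Lebesgue differentiation theorem gives $\frac{\rmd}{\rmd x}(I^{1-\alpha}_+ f)(x)=\varphi(x)$ for almost every $x$. Hence $D^\alpha_+ f=\varphi$ almost everywhere, which is the asserted formula. The same identity settles uniqueness: any $\varphi_1,\varphi_2\in L^p(\R,\rmd x)$ with $I^\alpha_+\varphi_1=I^\alpha_+\varphi_2=f$ both equal $D^\alpha_+ f$ almost everywhere and hence coincide. Alternatively, uniqueness follows from injectivity of $I^\alpha_+$ on $L^p$, visible from the Fourier symbol relation \eqref{FTofI}, namely $\cF(I^\alpha_+\varphi)(x)=\tilde\varphi(x)(-ix)^{-\alpha}$, which vanishes only if $\tilde\varphi$ does.

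The step demanding care is the interchange of integration in the index law. The iterated integral is not absolutely convergent in general: carrying the Beta computation through with $\abs{\varphi}$ produces $B(\alpha,1-\alpha)\int_{-\infty}^x\abs{\varphi(t)}\,\rmd t$, which is finite only when $\varphi\in L^1$ near $-\infty$, whereas we only know $\varphi\in L^p$ with $1\le p<1/\alpha$; correspondingly $I^{1-\alpha}_+ f$ falls outside the range covered by Theorem \ref{Def:fracint}, so the composition is not justified by the mapping property alone. I would remedy this by density: for $\varphi_N:=\varphi\,\1_{[-N,N]}\in L^1\cap L^p$ the double integral is absolutely convergent, so $D^\alpha_+(I^\alpha_+\varphi_N)=\varphi_N$ holds by the computation above, and since $\varphi_N\to\varphi$ in $L^p$ while $I^\alpha_+$ is continuous (Theorem \ref{Def:fracint}) and injective, passing to the limit yields $D^\alpha_+ f=\varphi$. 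This convergence issue is the main obstacle; once it is resolved, the differentiation and uniqueness steps are routine.
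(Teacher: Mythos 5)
First, a point of reference: the paper does not prove this lemma at all --- it is quoted verbatim from \cite{Mish}, Lemma 1.1.2 --- so your proposal must be measured against the standard literature argument rather than an in-paper proof. Much of what you do is correct: existence is indeed definitional, the reflection $x\mapsto -x$ handles the right-sided case, the Beta-integral/Fubini computation legitimately yields $I^{1-\alpha}_+I^\alpha_+\varphi_N=I^1_+\varphi_N$ for the truncations $\varphi_N=\varphi\1_{[-N,N]}\in L^1\cap L^p$, and Lebesgue differentiation then gives $D^\alpha_+\bigl(I^\alpha_+\varphi_N\bigr)=\varphi_N$ almost everywhere.

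The final step, however --- ``since $\varphi_N\to\varphi$ in $L^p$ while $I^\alpha_+$ is continuous and injective, passing to the limit yields $D^\alpha_+f=\varphi$'' --- is a genuine gap, and it cannot be closed along your route. Continuity and injectivity of $I^\alpha_+$ give $I^\alpha_+\varphi_N\to f$ in $L^q$, but they provide no control whatsoever on the operator $\tfrac{\rmd}{\rmd x}I^{1-\alpha}_+$ evaluated at the limit; some continuity of $D^\alpha_+$ would be needed, and none exists. Indeed the identity you are aiming for is \emph{false} if \eqref{eq:lsRLD} is read as a classical derivative of an absolutely convergent Lebesgue integral: for any $p\in(1,1/\alpha)$ take $0\leq\varphi(t)=\abs{t}^{-\delta}\1_{(-\infty,-1)}(t)$ with $\alpha<1/p<\delta\leq 1$, so that $\varphi\in L^p(\R,\rmd x)$, $\int_{-\infty}^x\varphi(t)\,\rmd t=+\infty$ for every $x$, and $f=I^\alpha_+\varphi$ is finite a.e.\ (the integrand decays like $\abs{t}^{\alpha-1-\delta}$ at $-\infty$ and $\delta>\alpha$). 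Running your own Beta computation with Tonelli instead of Fubini --- legitimate, since everything is nonnegative --- gives $(I^{1-\alpha}_+f)(x)=\int_{-\infty}^x\varphi(t)\,\rmd t=+\infty$ for every $x$, so the inner integral in \eqref{eq:lsRLD} diverges identically; since $\varphi\geq 0$, conditional convergence cannot rescue it, and no density argument can produce a quantity whose defining integral does not exist. The resolution in the cited sources --- and the one this paper itself records in Appendix \ref{Sec:Frac} --- is to interpret the derivative through the Marchaud form: the difference $f(x)-f(x-\xi)$ supplies exactly the cancellation the Riemann--Liouville kernel lacks, the truncated derivatives ${}^M\!D^\alpha_{+,\varepsilon}f$ converge in $L^p(\R,\rmd x)$, and ${}^M\!D^\alpha_\pm I^\alpha_\pm\varphi=\varphi$ (\cite{SKM}, Theorem 6.1, quoted as such in the appendix). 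Uniqueness then follows at once, since $\varphi$ is recovered from $f$ by this limit; note that your first uniqueness argument inherits the gap, while your Fourier-symbol alternative is essentially sound but needs the transforms read distributionally for $p>2$ (the paper's \eqref{FTofI} is stated only on $\cS(\R)$), and the continuity you invoke from Theorem \ref{Def:fracint} is stated only for $1<p<1/\alpha$, so $p=1$ would in any case require a separate word.
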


\begin{remark}\label{Rem:RLprop}
\begin{enumerate}
	\item By the definition of the Riemann-Liouville fractional derivative it holds automatically 	that
	\[
		D^\alpha_\pm I^\alpha_\pm f = f, \quad\text{for all } f\in L^p(\R,\rmd x), 1\leq p<1/\alpha.
	\]
	Moreover, if $f\in I^\alpha_\pm \bigl(L^p(\R,\rmd x)\bigr)$, $1\leq p<1/\alpha$, then we have also 
	\[
		I^\alpha_\pm D^\alpha_\pm f = f.
	\]	
	\item Equation (5.16) in \cite{SKM} proves the following integration-by-parts formula:
	Let $f\in L^p(\R,\rmd x)$ and $g\in L^q(\R,\rmd x)$ with $p,q\geq 1$ and $\frac{1}{p}+\frac{1}{q} = 1+\alpha$. Then it holds for all $0<\alpha<1$: 
	\begin{equation}\label{eq:Intbyparts1}
		\int_\R \lb I^\alpha_{+}f\rb (x) g(x) \,\rmd x = \int_\R f(x) \lb I^\alpha_{-}g\rb (x)\, \rmd x.
	\end{equation}
	\item The next formula gives the integration-by-parts formula for the fractional derivative, see (5.17) in \cite{SKM}: Let $0<\alpha<1$, $D^\alpha_-f\in L^p(\R,\rmd x)$, $D^\alpha_+g\in L^q(\R,\rmd x)$ with $1/p+1/q=1+\alpha$ and $f\in L^r(\R,\rmd x)$,  $g\in L^s(\R,\rmd x)$ with $1/r=1/p+\alpha$, $1/s = 1/q -\alpha$. Then, 
	\begin{equation}\label{eq:Intbyparts2}
		\int_\R \lb D^\alpha_- f \rb (x) g(x)\, \rmd x = \int_\R f(x) \lb D^\alpha_+ g \rb (x)\, \rmd x.
	\end{equation}
\end{enumerate}
\end{remark}

As an example we state the fractional integral and derivative of the indicator function $\1_{[a,b)}$, see Lemma 1.1.3 in \cite{Mish}:

\begin{example}\label{Ex:indicator}
The indicator function $\1_{(a,b)}$ for $a,b\in\R$ lies in the domain of $I^\alpha_\pm$. Moreover
\begin{align*}
	\lb I^\alpha_\pm\1_{(a,b)}\rb (t) = \frac{1}{\Gamma(\alpha+1)}\lb \mp(b-t)_\mp^\alpha  \pm(a-t)^\alpha_\mp \rb.
\end{align*}
The Riemann-Liouville fractional derivative of the indicator function $\1_{(a,b)}$ is given by 
\[
	\lb D^\alpha_\pm\1_{(a,b)}\rb (t) = \frac{1}{\Gamma(1-\alpha)}\lb \mp(b-t)_\mp^{-\alpha} \pm (a-t)^{-\alpha}_\mp \rb.
\]
\end{example}

\begin{lemma}\label{Lemma:integrable}
For $0<\alpha<1$ it holds that $D^\alpha_\pm\1_{[a,b)}\in L^1(\R,\rmd x)$.
\end{lemma}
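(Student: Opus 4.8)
The plan is to invoke the explicit formula for the Riemann-Liouville derivative of an indicator function from Example \ref{Ex:indicator} and then estimate the resulting $L^1$-norm by hand, the point being a cancellation between the two boundary terms at infinity. I treat the case $D^\alpha_-$; the case $D^\alpha_+$ follows by the reflection $t\mapsto -t$. Without loss of generality assume $a<b$, and recall that $\1_{[a,b)}=\1_{(a,b)}$ almost everywhere. By Example \ref{Ex:indicator} the function reads
\[
	\lb D^\alpha_-\1_{[a,b)}\rb(t) = \frac{1}{\Gamma(1-\alpha)}\lb (b-t)_+^{-\alpha} - (a-t)_+^{-\alpha}\rb,\quad t\in\R,
\]
so the integrand vanishes for $t\geq b$, and it remains only to control the two regions $[a,b)$ and $(-\infty,a)$.

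On the bounded region $[a,b)$ only the first term survives, and there it equals $(b-t)^{-\alpha}/\Gamma(1-\alpha)$. Since $0<\alpha<1$, the singularity at $t=b$ is integrable, and indeed $\int_a^b (b-t)^{-\alpha}\,\rmd t = (b-a)^{1-\alpha}/(1-\alpha)<\infty$. This disposes of the first region.

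The region $(-\infty,a)$ is where the difficulty lies: here both terms are present and, taken separately, neither $(b-t)^{-\alpha}$ nor $(a-t)^{-\alpha}$ is integrable at $-\infty$, precisely because $\alpha<1$. The idea is that their difference decays like $\abs{t}^{-\alpha-1}$, which is integrable. To make this precise I split $(-\infty,a)$ into $[a-1,a)$ and $(-\infty,a-1)$. On $[a-1,a)$ the term $(b-t)^{-\alpha}$ is bounded while $(a-t)^{-\alpha}$ has an integrable singularity at $t=a$, so the contribution is finite. On $(-\infty,a-1)$ I apply the mean value theorem to $f(x)=x^{-\alpha}$: for $t<a$ there is $\xi\in(a-t,b-t)$ with
\[
	(b-t)^{-\alpha}-(a-t)^{-\alpha} = -\alpha\,\xi^{-\alpha-1}(b-a),
\]
and since $x\mapsto x^{-\alpha-1}$ is decreasing and $\xi\geq a-t$, this gives $\abs{(b-t)^{-\alpha}-(a-t)^{-\alpha}}\leq \alpha(b-a)(a-t)^{-\alpha-1}$. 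The substitution $u=a-t$ then yields $\int_{-\infty}^{a-1}(a-t)^{-\alpha-1}\,\rmd t = \int_1^\infty u^{-\alpha-1}\,\rmd u = 1/\alpha<\infty$, which closes the estimate and proves the claim.

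The main obstacle is exactly that the two boundary terms individually fail to be integrable at infinity, so the whole argument hinges on quantifying their cancellation; the mean value theorem applied to $x^{-\alpha}$ upgrades the two $O(\abs{t}^{-\alpha})$ terms into an $O(\abs{t}^{-\alpha-1})$ difference, after which the remaining integrals are elementary.
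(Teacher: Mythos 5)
Your proof is correct and follows essentially the same route as the paper: both start from the explicit formula of Example \ref{Ex:indicator}, use the mean value theorem on $(-\infty,a-1)$ to upgrade the difference of the two tails to the integrable decay $(a-t)^{-\alpha-1}$, and handle the integrable singularities at $a$ and $b$ by a direct splitting of the remaining bounded region (your split at $a$ versus the paper's at $a+\delta$ is an immaterial difference). The reduction of $D^\alpha_+$ to $D^\alpha_-$ by reflection matches the paper's ``similar arguments'' remark.
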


\begin{proof}
See also \cite{ST94}. 
%Note that for every $0<\alpha<1$ it holds
%\[
%	\lb D^\alpha_\pm\1_{(a,b)}\rb (t) = \frac{1}{\Gamma(1-\alpha)}\lb \mp(b-t)_\mp^{-\alpha} \pm (a-t)^{-\alpha}_\mp \rb.
%\]
For $x<a$ and for some $\xi\in[a,b]$ we have by the mean value theorem
\[
	\abs{(b-x)^{-\alpha}_+ - (a-x)^{-\alpha}_+} = (b-a) \alpha (\xi-x)^{-\alpha-1} \leq (b-a) \alpha (a-x)^{-\alpha-1}.
\]
Thus
%\[
%	\int_{-\infty}^{a-1} \abs{(b-x)^{-\alpha}_+ - (a-x)^{-\alpha}_+}^2\,\rmd x \leq (b-a)^2\alpha^2 \int_{-\infty}^{a-1} (a-x)^{-2\alpha-2}\,\rmd x <\infty
%\]
%and
\[
	\int_{-\infty}^{a-1} \abs{(b-x)^{-\alpha}_+ - (a-x)^{-\alpha}_+}\,\rmd x \leq (b-a)\alpha \int_{-\infty}^{a-1} (a-x)^{-\alpha-1}\,\rmd x <\infty.
\]
For $x\in[a-1,a+\delta]$, $\delta>0$ small enough such that $a+\delta<b$, there is a constant $C<\infty$ such that $(b-x)^{-\alpha} \leq C$ and we get
%\begin{align*}
%	&\int_{a-1}^{a+\delta} \abs{(b-x)^{-\alpha}_+ - (a-x)^{-\alpha}_+}^2\,\rmd x \leq \int_{a-1}^{a+\delta} \lb\abs{(b-x)^{-\alpha}_+} + \abs{(a-x)^{-\alpha}_+}\rb^2\,\rmd x \\
%	&\leq \int_{a-1}^{a+\delta} C^2 + C(a-x)_+^{-\alpha} + (a-x)_+^{-2\alpha}\,\rmd x \\
%	&\leq K + C \int_{a-1}^a (a-x)^{-\alpha}\,\rmd x + \int_{a-1}^a (a-x)^{-2\alpha}\,\rmd x < \infty .
%\end{align*}
%Moreover
\begin{align*}
	&\int_{a-1}^{a+\delta} \abs{(b-x)^{-\alpha}_+ - (a-x)^{-\alpha}_+}\,\rmd x \leq C(1+\delta) + \int_{a-1}^a (a-x)^{-\alpha}\,\rmd x < \infty.
\end{align*}
For $x\in[a+\delta,b]$, we only have to consider the first term and find that
%\[
%	\int_{a+\delta}^b (b-x)^{-2\alpha}\,\rmd x < \infty
%\]
%and also
\[
	\int_{a+\delta}^b (b-x)^{-\alpha}\,\rmd x < \infty.
\]
For $x>b$ the function is zero. This shows that $D^\alpha_-\1_{[a,b)}\in L^1(\R,\rmd x)$. Use similar arguments to get $D^\alpha_+\1_{[a,b)}\in L^1(\R,\rmd x)$. 
\end{proof}

\begin{corollary}\label{Cor:integrable}
A slight modification of the previous proof shows that $D^\alpha_\pm\1_{[a,b)}\in L^p(\R,\rmd x)$ whenever $\alpha p<1$.
\end{corollary}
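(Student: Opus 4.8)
The plan is to reuse the region decomposition from the proof of Lemma \ref{Lemma:integrable} essentially verbatim, and only keep track of how the exponents transform when the integrand is raised to the power $p$. By Example \ref{Ex:indicator} we have, up to the positive constant $1/\Gamma(1-\alpha)$,
\[
	\abs{\lb D^\alpha_-\1_{[a,b)}\rb(x)} = \frac{1}{\Gamma(1-\alpha)}\abs{(b-x)^{-\alpha}_+ - (a-x)^{-\alpha}_+},
\]
and the expression for $D^\alpha_+$ is obtained by the reflection $x\mapsto -x$ (which interchanges $(\cdot)_+$ and $(\cdot)_-$ and preserves $L^p$ norms). Hence it suffices to estimate $\int_\R\abs{(b-x)^{-\alpha}_+ - (a-x)^{-\alpha}_+}^p\,\rmd x$ and then repeat the argument with the obvious sign changes for $D^\alpha_+$.

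First I would split $\R$ into the same four pieces as before: $(-\infty,a-1)$, $[a-1,a+\delta]$, $[a+\delta,b]$ and $(b,\infty)$, where $\delta>0$ satisfies $a+\delta<b$. On $(b,\infty)$ the function vanishes. On the far tail $(-\infty,a-1)$ the mean value theorem bound from the previous proof gives $\abs{(b-x)^{-\alpha}_+ - (a-x)^{-\alpha}_+}\leq (b-a)\alpha(a-x)^{-\alpha-1}$, so after raising to the power $p$ the integrand is dominated by a constant multiple of $(a-x)^{-(\alpha+1)p}$, which is integrable at $-\infty$ as soon as $(\alpha+1)p>1$; for the relevant range of $p$ (in particular $p\geq 1$) this is automatic since $\alpha>0$, so the tail requires nothing beyond what we already have.

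The decisive estimates are the two endpoint singularities, and this is where the hypothesis $\alpha p<1$ enters. On $[a-1,a+\delta]$ the term $(b-x)^{-\alpha}$ is bounded by a constant $C$, so the $p$-th power of the integrand is controlled by $C^p+(a-x)^{-\alpha p}_+$; the singular contribution near $x=a$ is $(a-x)^{-\alpha p}$, whose integral over a one-sided neighborhood of $a$ is finite precisely when $\alpha p<1$. Symmetrically, on $[a+\delta,b]$ the term $(a-x)^{-\alpha}_+$ vanishes and only $(b-x)^{-\alpha p}$ survives, again integrable over a neighborhood of $b$ if and only if $\alpha p<1$. Summing the four contributions gives $D^\alpha_-\1_{[a,b)}\in L^p(\R,\rmd x)$, and the reflected computation gives the same for $D^\alpha_+$.

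The only genuinely new point relative to Lemma \ref{Lemma:integrable} is that, after taking $p$-th powers, the endpoint singularities carry the exponent $\alpha p$ instead of $\alpha$, so the integrability threshold moves from $\alpha<1$ to $\alpha p<1$. I expect this endpoint analysis to be the crux, although it remains entirely elementary; the tail and the trivial region contribute nothing new, and no additional inequalities beyond an elementary bound of the form $\abs{u+v}^p\leq c_p\lb\abs{u}^p+\abs{v}^p\rb$ are needed.
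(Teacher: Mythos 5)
Your proof is correct and is precisely the ``slight modification'' the paper has in mind: the corollary carries no separate proof beyond invoking Lemma \ref{Lemma:integrable}, and you reproduce its four-region decomposition and mean value theorem tail bound, correctly identifying that after taking $p$-th powers the endpoint singularities carry exponent $\alpha p$ (hence the hypothesis $\alpha p<1$) while the tail exponent $(\alpha+1)p$ is harmless for the relevant range $p\geq 1$. Nothing needs to be added.
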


\begin{lemma}\label{Lem:fracDerInd}
Let $0<\alpha<1/2$. Then for every $a,b\in\R$ it holds
\[
	\widetilde{D^\alpha_\pm\1_{[a,b]}}(x) = (\mp ix)^\alpha \widetilde{\1_{[a,b]}}(x).
\]
\end{lemma}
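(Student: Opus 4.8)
The plan is to read the claimed identity as an extension of the Fourier-transform rule \eqref{FTofI} for the fractional integral from $\cS(\R)$ to the single function $\varphi := D^\alpha_\pm\1_{[a,b)}$, followed by an inversion. The hypothesis $0<\alpha<1/2$ enters right at the start in two ways. Since $2\alpha<1$, Corollary \ref{Cor:integrable} gives $\varphi\in L^2(\R,\rmd x)$ while Lemma \ref{Lemma:integrable} gives $\varphi\in L^1(\R,\rmd x)$, so $\tilde\varphi$ is a bounded continuous function; and $2<1/\alpha$ makes $p=2$ admissible in Theorem \ref{Def:fracint}, so that $I^\alpha_\pm\colon L^2(\R,\rmd x)\to L^q(\R,\rmd x)$ with $q=2/(1-2\alpha)$ is continuous. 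Moreover, by the definition of the Riemann-Liouville derivative as the inverse of $I^\alpha_\pm$ on its range, together with Remark \ref{Rem:RLprop} and the explicit formulas of Example \ref{Ex:indicator}, one has the inversion relation $I^\alpha_\pm\varphi=I^\alpha_\pm D^\alpha_\pm\1_{[a,b)}=\1_{[a,b)}$.

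First I would pick $\varphi_n\in\cS(\R)$ with $\varphi_n\to\varphi$ simultaneously in $L^1(\R,\rmd x)$ and $L^2(\R,\rmd x)$, which is possible by truncation and mollification. For every $n$ the rule \eqref{FTofI} applies verbatim and gives the exact identity $\cF(I^\alpha_\pm\varphi_n)(x)=(\mp ix)^{-\alpha}\tilde\varphi_n(x)$. I then pass to the limit purely in the space of tempered distributions. Continuity of $I^\alpha_\pm$ yields $I^\alpha_\pm\varphi_n\to\1_{[a,b)}$ in $L^q(\R,\rmd x)$, hence in $\cS'(\R)$, and since the Fourier transform is continuous on $\cS'(\R)$ this gives $\cF(I^\alpha_\pm\varphi_n)\to\cF\1_{[a,b)}$ in $\cS'(\R)$. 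On the other side, $L^1$-convergence gives $\tilde\varphi_n\to\tilde\varphi$ uniformly, and since $x\mapsto(\mp ix)^{-\alpha}$ has modulus $\abs{x}^{-\alpha}$, locally integrable because $\alpha<1$, and is tempered, the products $(\mp ix)^{-\alpha}\tilde\varphi_n$ converge to $(\mp ix)^{-\alpha}\tilde\varphi$ in $\cS'(\R)$ as well. By uniqueness of limits in $\cS'(\R)$ I conclude $\cF\1_{[a,b)}(x)=(\mp ix)^{-\alpha}\tilde\varphi(x)$ as an identity of locally integrable functions.

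It then only remains to solve for $\tilde\varphi$. Multiplying this identity by $(\mp ix)^{\alpha}$, which is nonzero and locally integrable for $x\neq 0$, gives $\widetilde{D^\alpha_\pm\1_{[a,b)}}(x)=\tilde\varphi(x)=(\mp ix)^{\alpha}\widetilde{\1_{[a,b)}}(x)$ for almost every $x$; since both sides are continuous for $x\neq 0$, the equality holds pointwise there, and as $\1_{[a,b]}$ and $\1_{[a,b)}$ differ only on a null set this is the assertion. The step I expect to be the main obstacle is exactly this transfer of \eqref{FTofI} off $\cS(\R)$: because $I^\alpha_\pm$ raises the integrability exponent, $I^\alpha_\pm\varphi$ lives only in $L^q$ with $q>2$, where the Fourier transform is not a genuine function, so it seems safest to run the whole limit in $\cS'(\R)$ and invoke uniqueness of distributional limits rather than Plancherel or Hausdorff-Young. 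The supporting facts $\varphi\in L^1\cap L^2$ and $I^\alpha_\pm D^\alpha_\pm\1_{[a,b)}=\1_{[a,b)}$ are the other points where I would lean on Lemma \ref{Lemma:integrable}, Corollary \ref{Cor:integrable}, Remark \ref{Rem:RLprop} and Example \ref{Ex:indicator}.
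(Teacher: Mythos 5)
Your proof is correct, but it takes a genuinely different route from the paper. The paper's own proof is a direct distributional computation: it sets $f(x) := -(b-x)^{-\alpha}_- + (a-x)^{-\alpha}_-$, so that $D^\alpha_+\1_{[a,b]} = f/\Gamma(1-\alpha)$ by Example \ref{Ex:indicator}, notes $f\in L^1(\R,\rmd x)\cap L^2(\R,\rmd x)$ for $\alpha<1/2$ via Corollary \ref{Cor:integrable}, pairs $\cF f$ against a Schwartz test function, and evaluates the pairing by changes of variables together with the Gelfand--Shilov pair $t_\pm^{\alpha-1} \leftrightarrow \tfrac{\Gamma(\alpha)}{\sqrt{2\pi}}(\mp ix)^{-\alpha}$; this yields the closed form $\widetilde{D^\alpha_+\1_{[a,b]}}(x) = -\tfrac{1}{\sqrt{2\pi}}(-ix)^{\alpha-1}(\e^{ibx}-\e^{iax})$, which is then compared with the elementary transform of the indicator. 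You instead dualize through the fractional integral: you use the inversion $I^\alpha_\pm D^\alpha_\pm \1_{[a,b)} = \1_{[a,b)}$, approximate $\varphi := D^\alpha_\pm\1_{[a,b)}$ by Schwartz functions simultaneously in $L^1$ and $L^2$, apply \eqref{FTofI} to the approximants, and pass to the limit in $\cS'(\R)$, with $\alpha<1/2$ entering both where it does in the paper ($L^2$-membership via Corollary \ref{Cor:integrable}) and through the admissibility of $p=2$ in Theorem \ref{Def:fracint}. Your soft argument buys generality---it proves $\cF D^\alpha_\pm f = (\mp i\,\cdot)^\alpha \cF f$ for any $f = I^\alpha_\pm\varphi$ with $\varphi\in L^1\cap L^2$, with nothing specific to indicators---and you correctly identify and sidestep the real pitfall, namely that $I^\alpha_\pm\varphi_n$ lies only in $L^q$ with $q>2$, so the identity for the approximants must be read in $\cS'(\R)$ rather than via Plancherel. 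The paper's computation, by contrast, stays at the level of the explicit kernel and delivers the closed-form transform as a byproduct. One point you should make explicit: the relation $I^\alpha_\pm D^\alpha_\pm\1_{[a,b)} = \1_{[a,b)}$ requires $\1_{[a,b)}\in I^\alpha_\pm\bigl(L^p(\R,\rmd x)\bigr)$ (Remark \ref{Rem:RLprop}); this is implicit in Example \ref{Ex:indicator} (Lemma 1.1.3 of the cited reference), where $D^\alpha_\pm\1_{[a,b)}$ is by definition the $I^\alpha_\pm$-preimage, so your use of the inversion is legitimate but should be attributed to that fact rather than to the explicit formulas alone. Finally, your restriction to equality off $x=0$ is harmless (and in fact $\tilde\varphi(0)=0$, so the identity extends), and the passage from $\1_{[a,b)}$ to $\1_{[a,b]}$ is, as you say, immaterial.
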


\begin{proof}
First let $0<\alpha<1$. From \cite{GS64} it is known that a Fourier transform pair is given in the distributional sense by
\[
	t_\pm^{\alpha-1} \quad \overset{\cF}{\longleftrightarrow} \quad \frac{\Gamma(\alpha)}{\sqrt{2\pi}}(\mp ix)^{-\alpha}.
\]
%Here, 
%\[
%	t_+ := \begin{cases} t, & t>0,\\ 0, & t<0, \end{cases} \quad \text{ and } \quad t_- := \begin{cases} 0, & t>0, \\ -t, & t<0. \end{cases}
%\]
This means that for each $\varphi\in\cS(\R)$ it holds that
\[
	\lab \cF(\cdot)^{-\alpha}_\pm,\varphi\rab = \lab (\cdot)^{-\alpha}_\pm,\cF\varphi\rab = \frac{\Gamma(1-\alpha)}{\sqrt{2\pi}}\lab (\mp i\cdot)^{\alpha-1},\varphi\rab,
\]
i.e.
\begin{align*}
	&\int_\R t^{-\alpha}_\pm (\cF\varphi)(t)\,\rmd t = \frac{\Gamma(1-\alpha)}{\sqrt{2\pi}} \int_\R (\mp ix)^{\alpha-1}\varphi(x)\,\rmd x.
\end{align*}
Set $f(x):= -(b-x)^{-\alpha}_- + (a-x)^{-\alpha}_-$, $x\in\R$. Then Corollary \ref{Cor:integrable} shows that $f\in L^1(\R,\rmd x)\cap L^2(\R,\rmd x)$ for $0<\alpha<1/2$. Furthermore it holds for each $\varphi\in\cS(\R)$:
\begin{align*}
	&\lab \cF f,\varphi\rab = \lab f,\cF\varphi\rab = \int_\R f(x) (\cF\varphi)(x)\,\rmd x \\
	&= -\int_\R (b-x)^{-\alpha}_- (\cF\varphi)(x)\,\rmd x + \int_\R (a-x)^{-\alpha}_- (\cF\varphi)(x)\,\rmd x \\
	&= -\int_b^\infty (x-b)^{-\alpha} (\cF\varphi)(x) \,\rmd x + \int_a^\infty (x-a)^{-\alpha} (\cF\varphi)(x)\,\rmd x \\
	&= - \int_0^\infty t^{-\alpha} (\cF\varphi)(b+t)\,\rmd t + \int_0^\infty t^{-\alpha} (\cF\varphi)(a+t)\,\rmd t \\
	&= \int_\R t^{-\alpha}_+ \lb (\cF\varphi)(a+t) - (\cF\varphi)(b+t)\rb \,\rmd t \\
	&= \frac{1}{\sqrt{2\pi}} \int_\R \int_\R t^{-\alpha}_+ \varphi(s) (\e^{ias}-\e^{ibs})\e^{its}\,\rmd s\,\rmd t = \int_\R t^{-\alpha}_+ \cF\lb (\e^{ia\cdot}-\e^{ib\cdot})\varphi\rb(t)\,\rmd t \\
	&= \frac{\Gamma(1-\alpha)}{\sqrt{2\pi}} \int_\R (-ix)^{\alpha-1} (\e^{iax}-\e^{ibx})\varphi(x) \,\rmd x \\
	&= \frac{\Gamma(1-\alpha)}{\sqrt{2\pi}} \lab (-i\cdot)^{\alpha-1} (\e^{ia\cdot}-\e^{ib\cdot}),\varphi\rab.	
\end{align*}
Since $D^\alpha_+ \1_{[a,b]}(x) = \frac{1}{\Gamma(1-\alpha)} f(x)$, see Example \ref{Ex:indicator}, we conclude that
\[
	\widetilde{D^\alpha_+ \1_{[a,b]}}(x) = -\frac{1}{\sqrt{2\pi}} (-ix)^{\alpha-1} (\e^{ibx}-\e^{iax}).
\]
Since it is well known that
\[
	\widetilde{\1_{[a,b]}}(x) = \frac{1}{\sqrt{2\pi}} \frac{\e^{ibx}-\e^{iax}}{ix}
\]
we obtain
\[
	\widetilde{D^\alpha_+ \1_{[a,b]}}(x) = \widetilde{\1_{[a,b]}}(x) (-ix) (-ix)^{\alpha-1} = \widetilde{\1_{[a,b)}}(x)(-ix)^\alpha.
\]
A similar calculation for $g(x) = (b-x)_+^{-\alpha} - (a-x)_+^{-\alpha}$ shows that
%\begin{align*}
%	&\lab \cF f_-,\varphi\rab = \lab f_-,\cF\varphi\rab = \int_\R f_-(x) (\cF\varphi)(x)\,\rmd x \\
%	&= \int_\R (b-x)^{-\alpha}_+ (\cF\varphi)(x)\,\rmd x - \int_\R (a-x)^{-\alpha}_+ (\cF\varphi)(x)\,\rmd x \\
%	&= -\int_{-\infty}^b (b-x)^{-\alpha} (\cF\varphi)(x) \,\rmd x + \int_{-\infty}^a (a-x)^{-\alpha} (\cF\varphi)(x)\,\rmd x \\
%	&= - \int_\infty^0 t^{-\alpha} (\cF\varphi)(b-t)\,\rmd t + \int_\infty^0 t^{-\alpha} (\cF\varphi)(a-t)\,\rmd t \\
%	&= \int_{-\infty}^0 (-t)^{-\alpha} \lb (\cF\varphi)(b+t) - (\cF\varphi)(a+t)\rb \,\rmd t \\
%	&= \int_\R t^{-\alpha}_- \lb (\cF\varphi)(b+t) - (\cF\varphi)(a+t)\rb \,\rmd t \\
%	&= \frac{1}{\sqrt{2\pi}} \int_\R \int_\R t^{-\alpha}_- \varphi(s) (\e^{ibs}-\e^{ias})\e^{its}\,\rmd s\,\rmd t = \int_\R t^{-\alpha}_- \cF\lb (\e^{ib\cdot}-\e^{ia\cdot})\varphi\rb(t)\,\rmd t \\
%	&= \frac{\Gamma(1-\alpha)}{\sqrt{2\pi}} \int_\R (ix)^{\alpha-1} (\e^{ibx}-\e^{iax})\varphi(x) \,\rmd x = \frac{\Gamma(1-\alpha)}{\sqrt{2\pi}} \lab (i\cdot)^{\alpha-1} (\e^{ib\cdot}-\e^{ia\cdot}),\varphi\rab.	
%\end{align*}
\[
	\widetilde{D^\alpha_-\1_{[a,b]}}(x) = (ix)^\alpha \widetilde{\1_{[a,b]}}(x).
\]
\end{proof}

\subsection{Fractional derivative of Caputo-type}
In contrast to the Riemann-Liouville fractional derivatives \eqref{eq:lsRLDInt}, \eqref{eq:rsRLDInt} and \eqref{eq:lsRLD}, \eqref{eq:rsRLD} we denote the fractional derivative of Caputo-type by $^C\!D^\alpha$. 

\begin{theorem}[\cite{KST06}, Theorem 2.1]
A sufficient condition for the left- and right-sided Caputo fractional derivatives of $f\colon [a,b]\to\R$ of order $0<\alpha<1$ is that $f$ is absolutely continuous. Then
\begin{align}
	\lb^C\!D^\alpha_{a+}f\rb(x) := \lb D^\alpha_{a+}f\rb(x) - \frac{f(a)}{\Gamma(1-\alpha)}(x-a)^{-\alpha},\quad x\in[a,b], \label{eq:lsCDInt} \\
	\lb^C\!D^\alpha_{b-}f\rb(x) := \lb D^\alpha_{b-}f\rb(x) - \frac{f(b)}{\Gamma(1-\alpha)}(b-x)^{-\alpha},\quad x\in[a,b]. \tag{\ref{eq:lsCDInt}$'$} \label{eq:rsCDInt}
\end{align}
In this case it holds that \eqref{eq:lsCDInt} and \eqref{eq:rsCDInt} coincide with the following expressions:
\begin{align}
	\lb^C\!D^\alpha_{a+}f\rb(x) = \frac{1}{\Gamma(1-\alpha)} \int_a^x f'(t) (x-t)^{-\alpha}\,\rmd t,\quad x\in[a,b], \label{eq:lsCDInt2} \\
	\lb^C\!D^\alpha_{b-}f\rb(x) = \frac{-1}{\Gamma(1-\alpha)} \int_x^b f'(t) (t-x)^{-\alpha}\,\rmd t,\quad x\in[a,b]. \tag{\ref{eq:lsCDInt2}$'$} \label{eq:rsCDInt2}
\end{align}
\end{theorem}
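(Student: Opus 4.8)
The plan is to deduce everything from the representation of the Riemann--Liouville fractional derivative of an absolutely continuous function recorded just above (the statement attributed to \cite{SKM}, Lemma 2.2). That result asserts, for absolutely continuous $f\colon[a,b]\to\R$ and $0<\alpha<1$, that $D^\alpha_{a+}f$ exists, belongs to $L^p([a,b],\rmd x)$ for $1\le p<1/\alpha$, and admits the representation
\[
	\lb D^\alpha_{a+}f\rb(x) = \frac{1}{\Gamma(1-\alpha)}\lb\frac{f(a)}{(x-a)^\alpha} + \int_a^x f'(t) (x-t)^{-\alpha}\,\rmd t\rb.
\]
Since ${}^C\!D^\alpha_{a+}f$ is defined in \eqref{eq:lsCDInt} as $D^\alpha_{a+}f$ minus the explicit power $\tfrac{f(a)}{\Gamma(1-\alpha)}(x-a)^{-\alpha}$, this representation already shows that the Caputo derivative exists whenever $f$ is absolutely continuous, which is the asserted sufficiency.

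For the coincidence of \eqref{eq:lsCDInt} with \eqref{eq:lsCDInt2}, I would simply insert the displayed representation into \eqref{eq:lsCDInt}. The boundary contribution $\tfrac{f(a)}{\Gamma(1-\alpha)}(x-a)^{-\alpha}$ produced by $D^\alpha_{a+}f$ then cancels exactly against the subtracted term in the definition, leaving
\[
	\lb {}^C\!D^\alpha_{a+}f\rb(x) = \frac{1}{\Gamma(1-\alpha)}\int_a^x f'(t)(x-t)^{-\alpha}\,\rmd t,
\]
which is precisely \eqref{eq:lsCDInt2}. The right-sided identity \eqref{eq:rsCDInt2} is obtained by the symmetric computation, starting from the companion formula for $D^\alpha_{b-}f$ and cancelling the term $\tfrac{f(b)}{\Gamma(1-\alpha)}(b-x)^{-\alpha}$.

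The single point carrying genuine analytic content is the finiteness, for almost every $x\in[a,b]$, of the integral on the right of \eqref{eq:lsCDInt2}. I would record that $\int_a^x f'(t)(x-t)^{-\alpha}\,\rmd t = \Gamma(1-\alpha)\lb I^{1-\alpha}_{+}f'\rb(x)$, i.e.\ it is the Riemann--Liouville fractional integral of order $1-\alpha$ of the derivative $f'$. Absolute continuity gives $f'\in L^1([a,b],\rmd x)$, and since $0<\alpha<1$ we have $1\le 1<1/(1-\alpha)$, so Theorem \ref{Theo:RLint}, applied with order $1-\alpha$ and $p=1$, yields existence of this fractional integral almost everywhere. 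This is the only estimate needed; the remainder of the argument is the algebraic cancellation of the boundary term, so I do not anticipate any real obstacle.
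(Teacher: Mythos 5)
Your proof is correct and is exactly the argument underlying this statement: the paper gives no proof of its own (it is quoted from \cite{KST06}, Theorem~2.1), and your derivation --- cancelling the boundary term in the representation of $D^\alpha_{a+}f$ and $D^\alpha_{b-}f$ for absolutely continuous $f$ recorded immediately above (\cite{SKM}, Lemma~2.2), with a.e.\ existence of the remaining integral supplied by Theorem~\ref{Theo:RLint} applied to $f'\in L^1([a,b],\rmd x)$ with order $1-\alpha$ and $p=1$ (admissible since $1<1/(1-\alpha)$ for $0<\alpha<1$) --- assembles precisely the ingredients the appendix provides. The only cosmetic slip is writing $I^{1-\alpha}_{+}f'$, the whole-line operator, where the interval version $I^{1-\alpha}_{a+}f'$ is meant.
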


%\begin{theorem}[\cite{Diet10}, Lemma 3.11]
%Let $f$ be continuously differentiable. Then $^C\!D^\alpha_{a+}f$ is continuous and $\lb^C\!D^\alpha_{a+}f\rb(a) = 0$. The same holds if $f$ is absolutely continuous and assuming that $^C\!D^{\alpha'}_{a+}f$ is continuous for some $\alpha<\alpha'<1$.
%\end{theorem}

\begin{theorem}[\cite{Diet10}, Theorem 3.7 and 3.8]\label{Theo:CDI}
Let $f$ be continuous on $[a,b]$. Then
\[
	^C\!D^\alpha_{a+}I^\alpha_{a+}f = f.
\]
If $f$ is absolutely continuous, then
\[
	\lb I^\alpha_{a+}{} ^C\!D^\alpha_{a+}f\rb(x) = f(x) - f(a), \quad x\in [a,b].
\]
\end{theorem}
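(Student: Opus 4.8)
The plan is to reduce both identities to the inverse relations between the Riemann--Liouville integral $I^\alpha_{a+}$ and derivative $D^\alpha_{a+}$ recorded in Remark \ref{Rem:RLprop}(1), combined with the algebraic link \eqref{eq:lsCDInt} between the Caputo and Riemann--Liouville derivatives. In this way neither identity requires a fresh computation; everything follows from the composition rules for the fractional operators once the boundary terms are accounted for.

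For the first identity I would set $g := I^\alpha_{a+}f$ and observe that its boundary value vanishes: since $f$ is continuous on $[a,b]$ it is bounded by some $M<\infty$, whence
\[
	\abs{g(x)} \leq \frac{M}{\Gamma(\alpha)}\int_a^x (x-t)^{\alpha-1}\,\rmd t = \frac{M}{\Gamma(\alpha+1)}(x-a)^\alpha \to 0 \quad (x\to a),
\]
so $g(a)=0$. Reading \eqref{eq:lsCDInt} as the defining formula of the Caputo derivative, i.e. $^C\!D^\alpha_{a+}g = D^\alpha_{a+}g - \tfrac{g(a)}{\Gamma(1-\alpha)}(x-a)^{-\alpha}$ (valid whenever the Riemann--Liouville derivative on the right exists), the boundary term drops out and $^C\!D^\alpha_{a+}g = D^\alpha_{a+}I^\alpha_{a+}f$. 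The inverse relation $D^\alpha_{a+}I^\alpha_{a+}f = f$ from Remark \ref{Rem:RLprop}(1), in its version on $[a,b]$ with base point $a$, then closes the first claim.

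For the second identity I would use that for absolutely continuous $f$ the Caputo derivative has the form \eqref{eq:lsCDInt2}, which is exactly a fractional integral of the ordinary derivative:
\[
	^C\!D^\alpha_{a+}f = \frac{1}{\Gamma(1-\alpha)}\int_a^x f'(t)(x-t)^{-\alpha}\,\rmd t = I^{1-\alpha}_{a+}f'.
\]
Applying $I^\alpha_{a+}$ and invoking the semigroup property $I^\alpha_{a+}I^{1-\alpha}_{a+} = I^1_{a+}$ gives $I^\alpha_{a+}{}^C\!D^\alpha_{a+}f = I^1_{a+}f'$, and since $f$ is absolutely continuous the fundamental theorem of calculus yields $I^1_{a+}f'(x) = \int_a^x f'(t)\,\rmd t = f(x)-f(a)$. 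Alternatively, staying entirely inside the excerpt, I could keep the form \eqref{eq:lsCDInt}, apply $I^\alpha_{a+}$ termwise, and evaluate the single nontrivial integral by a Beta-function computation,
\[
	I^\alpha_{a+}\big[(\cdot-a)^{-\alpha}\big](x) = \frac{1}{\Gamma(\alpha)}\int_a^x (t-a)^{-\alpha}(x-t)^{\alpha-1}\,\rmd t = \frac{\Gamma(1-\alpha)\Gamma(\alpha)}{\Gamma(\alpha)} = \Gamma(1-\alpha),
\]
so that the boundary contribution becomes exactly $f(a)$ while $I^\alpha_{a+}D^\alpha_{a+}f = f$ (Remark \ref{Rem:RLprop}(1)) produces the leading $f$, giving $f-f(a)$ once more.

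The main obstacle is not the algebra but the bookkeeping of regularity and the associated domain conditions. In the first part one must justify using the Caputo--Riemann--Liouville relation on $g = I^\alpha_{a+}f$ although, for merely continuous $f$, $g$ is only H\"older continuous of order $\alpha$ and need not be absolutely continuous; this is resolved by taking \eqref{eq:lsCDInt}, equivalently $^C\!D^\alpha_{a+}g = D^\alpha_{a+}(g-g(a))$, as the defining formula, which only demands that the Riemann--Liouville derivative exist. In the second part one must ensure the inverse relation $I^\alpha_{a+}D^\alpha_{a+}f = f$ is applicable, i.e. that $f$ lies in the range of $I^\alpha_{a+}$; for absolutely continuous $f$ this follows from the decomposition $f - f(a) = I^1_{a+}f' = I^\alpha_{a+}\big(I^{1-\alpha}_{a+}f'\big)$ with $I^{1-\alpha}_{a+}f'\in L^1([a,b],\rmd x)$, which is precisely what the $I^{1-\alpha}$-representation of the Caputo derivative encodes.
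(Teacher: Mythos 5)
Your proposal is correct, but note that the paper itself offers no proof of this statement: it is imported verbatim from \cite{Diet10} (Theorems 3.7 and 3.8), so there is no internal argument to compare against. What you have written is essentially the standard textbook derivation and matches Diethelm's own proofs in spirit: the first identity via ${}^C\!D^\alpha_{a+}g = D^\alpha_{a+}\lb g-g(a)\rb$ together with $g(a)=0$ for $g=I^\alpha_{a+}f$ and the composition rule $D^\alpha_{a+}I^\alpha_{a+}f=f$; the second via ${}^C\!D^\alpha_{a+}f = I^{1-\alpha}_{a+}f'$ and the semigroup law $I^\alpha_{a+}I^{1-\alpha}_{a+}=I^1_{a+}$ followed by the fundamental theorem of calculus. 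You also correctly identify and repair the one genuine subtlety: for merely continuous $f$ the function $I^\alpha_{a+}f$ is only H\"older continuous of order $\alpha$ and need not be absolutely continuous, so \eqref{eq:lsCDInt} cannot be invoked under the paper's absolute-continuity hypothesis; taking $D^\alpha_{a+}\lb g - g(a)\rb$ as the \emph{definition} of the Caputo derivative is precisely Diethelm's convention, under which the cited theorem is stated. Two small bookkeeping points. First, Remark \ref{Rem:RLprop}(1) is stated only for the whole-line operators, so for the interval version $D^\alpha_{a+}I^\alpha_{a+}f = f$ you should cite \cite{SKM} (Theorem 2.4) or observe directly that $I^{1-\alpha}_{a+}I^\alpha_{a+}f = I^1_{a+}f$ and differentiate, using continuity of $f$. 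Second, in your alternative route for the second identity, the range argument as written shows $f - f(a)\in I^\alpha_{a+}(L^1)$, not that $f$ itself lies in the range; on a finite interval constants do lie in the range, since your own computation gives $I^\alpha_{a+}\bigl[(\cdot-a)^{-\alpha}\bigr] = \Gamma(1-\alpha)$, so the gap closes immediately, or can be avoided entirely by applying $I^\alpha_{a+}D^\alpha_{a+}$ to $f-f(a)$ rather than to $f$. Your primary route (semigroup property plus fundamental theorem of calculus for absolutely continuous $f$) is airtight as written, and the Beta-function evaluation is correct.
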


%\begin{example}
%For the right-sided Caputo fractional derivative of $\1_{[a,b]}$ it holds that $^C\!D^\alpha_{a+}\1_{[a,b]} = 0$. Similar: $^C\!D^\alpha_{b-}\1_{[a,b]} = 0$. For the indicator function $1_{(a,b)}$ of an open interval the Caputo fractional derivative coincides with the Riemann-Liouville fractional derivative.
%\end{example}
%
%\begin{proof}
%By definition for $x\in[a,b]$:
%\begin{align*}
%	\lb^C\!D^\alpha_{a+}\1_{[a,b]}\rb(x) &= \lb D^\alpha_{a+}\1_{[a,b]}\rb(x) - \frac{\1_{[a,b]}(a)}{\Gamma(1-\alpha)}(x-a)^{-\alpha} \\
%	&= \lb D^\alpha_+\1_{[a,b]}\rb(x) - \frac{(x-a)^{-\alpha}}{\Gamma(1-\alpha)} \\
%	&= \frac{1}{\Gamma(1-\alpha)}\lb -(b-x)_-^{-\alpha} + (a-x)_-^{-\alpha}\rb - \frac{(x-a)^{-\alpha}}{\Gamma(1-\alpha)} \\
%	&= \frac{(x-a)^{-\alpha}}{\Gamma(1-\alpha)} - \frac{(x-a)^{-\alpha}}{\Gamma(1-\alpha)} = 0.
%\end{align*}	
%The left-sided derivative is calculated analogously and the statement for the open interval is clear by definition.
%\begin{align*}
%	\lb^C\!D^\alpha_{b-}\1_{[a,b]}\rb(x) &= \lb D^\alpha_{b-}\1_{[a,b]}\rb(x) - \frac{\1_{[a,b]}(b)}{\Gamma(1-\alpha)}(b-x)^{-\alpha} \\
%	&= \lb D^\alpha_-\1_{[a,b]}\rb(x) - \frac{(b-x)^{-\alpha}}{\Gamma(1-\alpha)} \\
%	&= \frac{1}{\Gamma(1-\alpha)}\lb (b-x)_+^{-\alpha} - (a-x)_+^{-\alpha}\rb - \frac{(b-x)^{-\alpha}}{\Gamma(1-\alpha)} \\
%	&= \frac{(b-x)^{-\alpha}}{\Gamma(1-\alpha)} - \frac{(b-x)^{-\alpha}}{\Gamma(1-\alpha)} = 0.
%\end{align*}	
%\end{proof}

For $f\colon\R\to\R$ the Caputo fractional derivatives on the real line are defined as follows, provided they exist:
\begin{align}
	\lb^C\!D^\alpha_{+}f\rb(x) = \frac{1}{\Gamma(1-\alpha)} \int_{-\infty}^x f'(t) (x-t)^{-\alpha}\,\rmd t,\quad x\in\R, \label{eq:lsCD} \\
	\lb^C\!D^\alpha_{-}f\rb(x) = \frac{-1}{\Gamma(1-\alpha)} \int_x^\infty f'(t) (t-x)^{-\alpha}\,\rmd t,\quad x\in\R. \tag{\ref{eq:lsCD}$'$} \label{eq:rsCD}
\end{align}
A sufficient condition for the existence of $^C\!D^\alpha_\pm f$ is that $f\in\cS(\R)$, see Theorem \ref{Theo:EqualityonS} below.

\subsection{Marchaud fractional derivative}
Next we define the left- and right-sided Marchaud fractional derivatives on the real line, provided they exist, for $f\colon\R\to\R$ by:
\begin{align}
	\lb^M\!D^\alpha_+f\rb(x) = \frac{\alpha}{\Gamma(1-\alpha)} \int_0^\infty \frac{f(x)-f(x-\xi)}{\xi^{\alpha+1}}\,\rmd\xi,\quad x\in\R, \label{eq:lsMD} \\
	\lb^M\!D^\alpha_-f\rb(x) = \frac{\alpha}{\Gamma(1-\alpha)} \int_0^\infty \frac{f(x)-f(x+\xi)}{\xi^{\alpha+1}}\,\rmd\xi,\quad x\in\R. \tag{\ref{eq:lsMD}$'$} \label{eq:rsMD}
\end{align}
We also introduce the truncated Marchaud fractional derivatives on the real line by
\begin{align}
	\lb^M\!D^\alpha_{+,\varepsilon}f\rb(x) = \frac{\alpha}{\Gamma(1-\alpha)} \int_\varepsilon^\infty \frac{f(x)-f(x-\xi)}{\xi^{\alpha+1}}\,\rmd\xi,\quad x\in\R, \label{eq:lsTMD} \\
	\lb^M\!D^\alpha_{-,\varepsilon}f\rb(x) = \frac{\alpha}{\Gamma(1-\alpha)} \int_\varepsilon^\infty \frac{f(x)-f(x+\xi)}{\xi^{\alpha+1}}\,\rmd\xi,\quad x\in\R. \tag{\ref{eq:lsTMD}$'$} \label{eq:rsTMD}
\end{align}
Then 
\[
	\lb^M\!D^\alpha_\pm f\rb = \lim_{\varepsilon\to 0} \lb^M\!D^\alpha_{\pm,\varepsilon}f\rb,
\]
where the limit is defined by the problem under consideration. 
%The Marchaud fractional derivative on the half axis for $f\colon\R_+\to\R$ and $x>0$ is
%\begin{align}
%	\lb^M\!D^\alpha_{0+}f\rb(x) = \frac{f(x)x^{-\alpha}}{\Gamma(1-\alpha)} + \frac{\alpha}{\Gamma(1-\alpha)} \int_0^\infty \frac{f(x)-f(x-\xi)}{\xi^{\alpha+1}}\,\rmd\xi \label{eq:lsMDhalf} \\
%	\lb^M\!D^\alpha_{-}f\rb(x) = \frac{\alpha}{\Gamma(1-\alpha)} \int_0^\infty \frac{f(x)-f(x+\xi)}{\xi^{\alpha+1}}\,\rmd\xi. \tag{\ref{eq:lsMDhalf}$'$} \label{eq:rsMDhalf}
%\end{align}
For $f\colon[a,b]\to\R$ the Marchaud fractional derivative on the interval $[a,b]$ is defined for $x\in[a,b]$ by
\begin{align}
	\lb^M\!D^\alpha_{a+}f\rb(x) = \frac{f(x)(x-a)^{-\alpha}}{\Gamma(1-\alpha)} + \frac{\alpha}{\Gamma(1-\alpha)} \int_a^x \frac{f(x)-f(x-\xi)}{\xi^{\alpha+1}}\,\rmd\xi, \label{eq:lsMDInt} \\
	\lb^M\!D^\alpha_{b-}f\rb(x) = \frac{f(x)(b-x)^{-\alpha}}{\Gamma(1-\alpha)} + \frac{\alpha}{\Gamma(1-\alpha)} \int_x^b \frac{f(x)-f(x-\xi)}{\xi^{\alpha+1}}\,\rmd\xi. \tag{\ref{eq:lsMDInt}$'$} \label{eq:rsMDInt}
\end{align}

\begin{theorem}[\cite{SKM}, Theorem 13.1.]
Let $f\in L^p([a,b],\rmd x)$. Then
\[
	^M\!D^\alpha_{a+}I^\alpha_{a+}f = f.
\]
In particular, the Marchaud fractional derivative and Riemann-Liouville fractional derivative on an interval coincide for $f\in I^\alpha_{a+}\lb L^p([a,b],\rmd x)\rb$. This is true, if e.g. $f$ is absolutely continuous.
\end{theorem}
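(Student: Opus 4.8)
The plan is to reduce the statement on the interval to the corresponding identity on the whole real line, to establish that identity by an approximate-identity argument, and then to read off the two ``in particular'' clauses.

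First I would extend $f$ by zero to the left of $a$, setting $\tilde f(x) = f(x)$ for $x\in[a,b]$ and $\tilde f(x)=0$ for $x<a$. Because the kernel $(x-t)_+^{\alpha-1}$ vanishes for $t>x$, the whole-line integral of Theorem \ref{Def:fracint} agrees on $[a,b]$ with the interval integral, so $g:=I^\alpha_{a+}f$ coincides there with $\tilde g:=I^\alpha_+\tilde f$, and $\tilde g$ vanishes for $x<a$. For the Marchaud derivative I would split the real-line integral at $t=x-a$: since $\tilde g(x-t)=0$ once $x-t<a$, the tail $\int_{x-a}^\infty \tilde g(x)\,t^{-\alpha-1}\,\rmd t = \tilde g(x)(x-a)^{-\alpha}/\alpha$ reproduces exactly the boundary term in \eqref{eq:lsMDInt}, while the integral over $(0,x-a)$ reproduces the principal part. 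Hence $\lb{}^M\!D^\alpha_{a+} g\rb(x) = \lb{}^M\!D^\alpha_+ \tilde g\rb(x)$ for $x\in[a,b]$, and since $\tilde g = I^\alpha_+\tilde f$ it suffices to prove ${}^M\!D^\alpha_+ I^\alpha_+ = \mathrm{Id}$ on $L^p(\R,\rmd x)$.

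For the real line I would write $I^\alpha_+ f = f * k_\alpha$ with $k_\alpha(s)=s_+^{\alpha-1}/\Gamma(\alpha)$ and compute the truncated Marchaud derivative
\[
	\lb{}^M\!D^\alpha_{+,\varepsilon} I^\alpha_+ f\rb(x) = \frac{\alpha}{\Gamma(1-\alpha)} \lb \frac{\varepsilon^{-\alpha}}{\alpha}\,(I^\alpha_+ f)(x) - \int_\varepsilon^\infty (I^\alpha_+ f)(x-t)\,t^{-\alpha-1}\,\rmd t \rb.
\]
Substituting the convolution and interchanging the order of integration by Fubini turns this into $(f*\Phi_\varepsilon)(x)$ for an explicit kernel $\Phi_\varepsilon$; the scalings $t\mapsto\varepsilon t$, $s\mapsto\varepsilon s$ give $\Phi_\varepsilon(s)=\varepsilon^{-1}\Phi_1(s/\varepsilon)$, so $\lcb\Phi_\varepsilon\rcb_{\varepsilon>0}$ is a dilation family. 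The normalisation $\int_0^\infty \Phi_1(s)\,\rmd s = 1$ I would read off from the Fourier symbols \eqref{FTofI} and \eqref{FTofD}, whose product $(-ix)^{\alpha}(-ix)^{-\alpha}=1$ identifies the composition with the identity. With $\Phi_1\in L^1$ this makes $\lcb\Phi_\varepsilon\rcb$ an approximate identity, and standard convolution theory yields $f*\Phi_\varepsilon\to f$ in $L^p(\R,\rmd x)$ as $\varepsilon\to 0$, which is precisely the limit defining ${}^M\!D^\alpha_+ I^\alpha_+ f = f$.

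For the ``in particular'' clause, suppose $f = I^\alpha_{a+}\varphi$ with $\varphi\in L^p([a,b],\rmd x)$. The identity just proved gives ${}^M\!D^\alpha_{a+} f = \varphi$, while the defining property of the Riemann-Liouville derivative (Remark \ref{Rem:RLprop}) gives $D^\alpha_{a+} f = \varphi$ as well, so the two derivatives coincide on the range $I^\alpha_{a+}\lb L^p([a,b],\rmd x)\rb$. If $f$ is absolutely continuous, then $I^\alpha_{a+}\,{}^C\!D^\alpha_{a+} f = f - f(a)$ by Theorem \ref{Theo:CDI}, and the explicit relation between the Caputo and Riemann-Liouville derivatives places $f$ (up to the constant $f(a)$, whose fractional integral is again of the required form) in $I^\alpha_{a+}\lb L^p\rb$, so the coincidence applies. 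The main obstacle I expect is the kernel computation: justifying the Fubini interchange near the joint singularities of $k_\alpha$ at $0$ and of the Marchaud weight $t^{-\alpha-1}$ at $t=\varepsilon$, and then verifying rigorously that $\Phi_1$ is integrable with unit mass. The Fourier-symbol shortcut delivers the normalisation instantly but only formally away from $p=2$, so the honest route is the direct beta-function evaluation of $\int_0^\infty\Phi_1$, which is where the real work lies.
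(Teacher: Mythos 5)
The paper offers no proof of this statement at all: it is imported verbatim from \cite{SKM} (Theorem 13.1 there), so there is no internal argument to compare against. Your sketch essentially reconstructs the proof in that reference: the zero-extension $\tilde f$ reducing the interval case to the whole line (your split of the Marchaud integral at $t=x-a$, with the tail $\int_{x-a}^\infty \tilde g(x)\,t^{-\alpha-1}\,\rmd t$ reproducing the boundary term of \eqref{eq:lsMDInt}, is exactly the right observation), followed by the identity ${}^M\!D^\alpha_+ I^\alpha_+ = \mathrm{Id}$ on the line via the dilation family $\Phi_\varepsilon(s)=\varepsilon^{-1}\Phi_1(s/\varepsilon)$, which is precisely how \cite{SKM} proves their Theorem 6.1 (also quoted in this paper). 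The one step you defer is indeed the crux and is Lemma 6.1 of \cite{SKM}: for $s>1$ the two contributions to $\Phi_1(s)$ cancel to order $s^{-1}$ and leave a tail of order $s^{\alpha-2}$, hence $\Phi_1\in L^1(0,\infty)$, and the unit mass comes from the beta-integral evaluation rather than the merely formal Fourier-symbol argument, exactly as you anticipate; with $\Phi_1\in L^1$ and $\int_0^\infty\Phi_1(s)\,\rmd s=1$, the $L^p$ approximate-identity convergence is standard and needs no positivity of $\Phi_1$. Two points worth tightening: for $p\geq 1/\alpha$ the whole-line theorem does not apply to $\tilde f$ in $L^p$ directly, but since $\tilde f$ is compactly supported it lies in $L^q(\R,\rmd x)$ for every $q<1/\alpha$, which licenses the reduction (with convergence then read in $L^q$); and in the absolutely continuous case the constant $f(a)$ does belong to $I^\alpha_{a+}\bigl(L^p([a,b],\rmd x)\bigr)$ because $I^\alpha_{a+}\bigl((\cdot-a)^{-\alpha}\bigr)=\Gamma(1-\alpha)$ and $(\cdot-a)^{-\alpha}\in L^p([a,b],\rmd x)$ for $p<1/\alpha$, which is the precise version of your parenthetical remark.
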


\begin{theorem}[\cite{SKM}, Theorem 6.1]
If $f\in L^p(\R,\rmd x)$ for $1\leq p<1/\alpha$. Then
\[
	\lb ^M\!D^\alpha_{\pm}I^\alpha_\pm f\rb(x) = f(x).
\]
Here $^M\!D^\alpha_{\pm}f$ is the $L^p$-limit of the truncated Marchaud fractional derivative \eqref{eq:lsTMD} and \eqref{eq:rsTMD}.
\end{theorem}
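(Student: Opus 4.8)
The statement is a left-inverse property, so the plan is to realize the truncated Marchaud derivatives ${}^M\!D^\alpha_{\pm,\varepsilon}$ applied to $I^\alpha_\pm f$ as an approximate identity acting on $f$. The reflection $f(x)\mapsto f(-x)$ interchanges the left- and right-sided operators, so it suffices to treat the $+$ case. I would set $g:=I^\alpha_+ f$ and record that $g=\Phi_\alpha * f$ is a one-sided convolution with the kernel $\Phi_\alpha(s)=s_+^{\alpha-1}/\Gamma(\alpha)$. The goal then becomes showing ${}^M\!D^\alpha_{+,\varepsilon}g\to f$ in $L^p(\R,\rmd x)$ as $\varepsilon\to 0$.

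The core computation is to insert this convolution representation of $g$ into the truncated Marchaud derivative \eqref{eq:lsTMD} and interchange the two integrations. Justifying the interchange is the first technical point; I would handle it with Tonelli's theorem, using the $L^p\to L^q$ continuity of $I^\alpha_+$ from Theorem \ref{Def:fracint} together with the integrability of the weight $\xi^{-\alpha-1}$ on $[\varepsilon,\infty)$ and the absolute integrability of the kernel verified below. After the interchange one obtains ${}^M\!D^\alpha_{+,\varepsilon}g=K_\varepsilon * f$ with
\[
	K_\varepsilon(u)=\frac{\alpha}{\Gamma(1-\alpha)\Gamma(\alpha)}\int_\varepsilon^\infty \frac{u_+^{\alpha-1}-(u-\xi)_+^{\alpha-1}}{\xi^{\alpha+1}}\,\rmd\xi .
\]
A direct check shows $K_\varepsilon$ is supported on $(0,\infty)$ and obeys the dilation law $K_\varepsilon(u)=\varepsilon^{-1}K_1(u/\varepsilon)$, so $(K_\varepsilon)_{\varepsilon>0}$ is a candidate approximate identity with $\norm{K_\varepsilon}_{L^1(\R,\rmd x)}=\norm{K_1}_{L^1(\R,\rmd x)}$ independent of $\varepsilon$.

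The main obstacle is proving that $K_1\in L^1(\R,\rmd x)$ and that $\int_\R K_1(u)\,\rmd u=1$. Near the origin one has $K_1(u)=u^{\alpha-1}/(\Gamma(\alpha)\Gamma(1-\alpha))$, which is harmless since $\alpha-1>-1$; the delicate behaviour is the tail $u\to\infty$. There the two naive leading contributions, of orders $u^{\alpha-1}$ and the borderline $u^{-1}$, cancel exactly. This cancellation rests on the finite-part evaluation of the divergent Beta integral $\int_0^1 \tau^{-\alpha-1}(1-\tau)^{\alpha-1}\,\rmd\tau$, equivalently on the reflection identity $\Gamma(\alpha)\Gamma(1-\alpha)=\pi/\sin(\pi\alpha)$, and it leaves an integrable decay; I would extract it via the substitution $\xi=u\tau$ in the definition of $K_1$ followed by an asymptotic expansion. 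The normalisation $\int_\R K_1=1$ is then cleanest to read off on the Fourier side: $I^\alpha_+$ has multiplier $(-ix)^{-\alpha}$ by \eqref{FTofI} and the Marchaud derivative has multiplier $(-ix)^{\alpha}$, so $(-ix)^{\alpha}(-ix)^{-\alpha}=1$ gives $\widehat{K_\varepsilon}(x)\to 1$ as $\varepsilon\to 0$ and in particular $\widehat{K_1}(0)=\int_\R K_1=1$.

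With $(K_\varepsilon)_{\varepsilon>0}$ established as an $L^1$ approximate identity, the standard theorem on convolution with approximate identities yields $K_\varepsilon * f\to f$ in $L^p(\R,\rmd x)$ for every $f\in L^p(\R,\rmd x)$, $1\le p<\infty$. Since ${}^M\!D^\alpha_+ f$ is by definition the $L^p$-limit of ${}^M\!D^\alpha_{+,\varepsilon}f$, this is exactly $({}^M\!D^\alpha_+ I^\alpha_+ f)(x)=f(x)$, and the $-$ case follows by reflection. An alternative, purely multiplier-based route would prove the identity first for $f\in\cS(\R)$ directly from the symbol product $(-ix)^{\alpha}(-ix)^{-\alpha}=1$ and then extend by density; I nonetheless prefer the real-variable kernel argument, because the Fourier transform of a general $L^p$ function with $p>2$ is delicate, whereas the approximate-identity estimates are valid on all of $L^p$.
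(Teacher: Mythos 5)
The paper does not prove this statement at all---it imports it verbatim from \cite{SKM}, Theorem 6.1---and your proposal reconstructs essentially the argument given in that cited source: writing ${}^M\!D^\alpha_{+,\varepsilon}I^\alpha_+ f = K_\varepsilon * f$ with the dilation family $K_\varepsilon(u)=\varepsilon^{-1}K_1(u/\varepsilon)$, proving $K_1\in L^1(\R,\rmd x)$ with $\int_\R K_1(u)\,\rmd u=1$, and concluding by the approximate-identity theorem in $L^p$. This is correct, including your identification of the one genuinely delicate point, the tail cancellation in $K_1$ (which rests on the finite-part Beta evaluation, i.e.\ the vanishing of the continued value $B(-\alpha,\alpha)=\Gamma(-\alpha)\Gamma(\alpha)/\Gamma(0)=0$, rather than on Euler reflection as such---a cosmetic mislabel only), so the proposal matches the cited proof essentially step for step.
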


%\begin{theorem}[\cite{SKM}, §6 Corollary 2]
%Let $\lb ^M\!D^\alpha_{-}\rb f\in L^p(\R,\rmd x)$ and $\lb ^M\!D^\alpha_{+}\rb g\in L^q(\R,\rmd x)$ with $p,q>1$ and $1/p+1/q = 1+\alpha$ for $f\in L^s(\R,\rmd x)$ and $g\in L^t(\R,\rmd x)$ fulfilling $1/s=1/p-\alpha$ and $1/t=1/q-\alpha$. Then
%\[
%	\int_\R f(x) \lb^M\!D^\alpha_+g\rb(x)\,\rmd x = \int_\R g(x) \lb^M\!D^\alpha_-f\rb(x)\,\rmd x.
%\]
%\end{theorem}

\begin{example}
The Marchaud fractional derivative of the indicator function $\1_{[a,b)}$ is given by:
\[
	\lb ^M\!D^\alpha_\pm\1_{[a,b)}\rb (t) = \frac{1}{\Gamma(1-\alpha)}\lb \mp(b-t)_\mp^{-\alpha} \pm (a-t)^{-\alpha}_\mp \rb.
\]
\end{example}

\begin{proof}
If $x>b$, then clearly $\lb ^M\!D^\alpha_-\1_{[a,b)}\rb (x)= 0$. Let now $x<a$. Then
\begin{align*}
	&\lb ^M\!D^\alpha_-\1_{[a,b)}\rb (x) = -\frac{\alpha}{\Gamma(1-\alpha)} \int_0^\infty \frac{\1_{[a,b)}(x+\xi)}{\xi^{\alpha+1}}\,\rmd\xi \\
	&= -\frac{\alpha}{\Gamma(1-\alpha)} \int_{a-x}^{b-x} \xi^{-\alpha-1}\,\rmd\xi = \frac{1}{\Gamma(1-\alpha)}\lb (b-x)^{-\alpha} - (a-x)^{-\alpha}\rb.
\end{align*}
For $x\in[a,b]$ it holds
\begin{align*}
	&\lb ^M\!D^\alpha_-\1_{[a,b)}\rb (x) = \frac{\alpha}{\Gamma(1-\alpha)} \int_0^\infty \frac{1 - \1_{[a,b)}(x+\xi)}{\xi^{\alpha+1}}\,\rmd\xi \\
	&= \frac{\alpha}{\Gamma(1-\alpha)} \int_{b-x}^\infty \xi^{-\alpha-1}\,\rmd\xi = \frac{1}{\Gamma(1-\alpha)} (b-x)^{-\alpha}.
\end{align*}
Altogether we obtain
\[
	\lb ^M\!D^\alpha_-\1_{[a,b)}\rb (t) = \frac{1}{\Gamma(1-\alpha)}\lb (b-t)_+^{-\alpha} - (a-t)^{-\alpha}_+ \rb.
\]
A similar calculation shows
\[
	\lb ^M\!D^\alpha_+\1_{[a,b)}\rb (t) = \frac{1}{\Gamma(1-\alpha)}\lb -(b-t)_-^{-\alpha} + (a-t)^{-\alpha}_- \rb.
\]
%Concerning the left-sided derivative let $x<a$. Then also $x-\xi<a$ for $\xi>0$ and:
%\begin{align*}
%	\lb ^M\!D^\alpha_+\1_{[a,b)}\rb (x) = -\frac{\alpha}{\Gamma(1-\alpha)} \int_0^\infty \frac{\1_{[a,b)}(x-\xi)}{\xi^{\alpha+1}}\,\rmd\xi = 0.
%\end{align*}
%Let now $x>b$. Then
%\begin{align*}
%	\lb ^M\!D^\alpha_+\1_{[a,b)}\rb (x) &= -\frac{\alpha}{\Gamma(1-\alpha)} \int_0^\infty \frac{\1_{[a,b)}(x-\xi)}{\xi^{\alpha+1}}\,\rmd\xi \\
%	&= -\frac{\alpha}{\Gamma(1-\alpha)} \int_{x-b}^{x-a} \xi^{-\alpha-1}\,\rmd\xi \\
%	&= \frac{1}{\Gamma(1-\alpha)}\lb -(x-b)^{-\alpha} + (x-a)^{-\alpha}\rb.
%\end{align*}
%For $x\in[a,b)$ it holds
%\begin{align*}
%	\lb ^M\!D^\alpha_+\1_{[a,b)}\rb (x) &= \frac{\alpha}{\Gamma(1-\alpha)} \int_0^\infty \frac{1 - \1_{[a,b)}(x-\xi)}{\xi^{\alpha+1}}\,\rmd\xi \\
%	&= \frac{\alpha}{\Gamma(1-\alpha)} \int_{x-a}^\infty \xi^{-\alpha-1}\,\rmd\xi \\
%	&= \frac{1}{\Gamma(1-\alpha)} (x-a)^{-\alpha}.
%\end{align*}
This finishes the proof.
\end{proof}

\subsection{Fractional derivatives of Schwartz test functions}
\begin{theorem}\label{Theo:EqualityonS}
The Riemann-Liouville fractional derivative coincides with the Marchaud fractional derivative and with the Caputo derivative on $\cS(\R)$.
\end{theorem}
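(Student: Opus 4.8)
The plan is to show that all three operators agree with one common expression, for which the Caputo form
\[
	\lb {}^C\!D^\alpha_+ f\rb(x) = \frac{1}{\Gamma(1-\alpha)}\int_{-\infty}^x f'(t)(x-t)^{-\alpha}\,\rmd t
\]
is the most convenient target; by the obvious reflection symmetry it suffices to treat the left-sided operators, and throughout I would use only that $f\in\cS(\R)$ makes $f$ and all its derivatives bounded and faster than polynomially decaying. This is precisely what renders the singular kernels $u^{-\alpha}$ with $0<\alpha<1$ absolutely integrable against $f$ and $f'$, and what justifies the analytic manipulations below.

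First I would identify the Riemann--Liouville and Caputo derivatives. Substituting $u=x-t$ in \eqref{eq:lsRLD} rewrites the Riemann--Liouville derivative as
\[
	\lb D^\alpha_+ f\rb(x) = \frac{1}{\Gamma(1-\alpha)}\frac{\rmd}{\rmd x}\int_0^\infty f(x-u)\,u^{-\alpha}\,\rmd u .
\]
Differentiating in $x$ produces the integrand $f'(x-u)u^{-\alpha}$, which is dominated, locally uniformly in $x$, by $\norm{f'}_\infty u^{-\alpha}$ near $u=0$ (integrable since $\alpha<1$) and by the rapid decay of $f'(x-u)$ as $u\to\infty$; hence one may differentiate under the integral sign, and reversing the substitution yields $\lb D^\alpha_+ f\rb(x)=\lb {}^C\!D^\alpha_+ f\rb(x)$. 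Equivalently, this is the $a\to-\infty$ limit of \eqref{eq:lsCDInt}, in which the boundary term $f(a)(x-a)^{-\alpha}/\Gamma(1-\alpha)$ vanishes by the rapid decay of $f$.

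Next I would identify the Marchaud and Caputo derivatives. Writing the Marchaud derivative \eqref{eq:lsMD} as the limit of its truncations and integrating $\int_\varepsilon^\infty \lb f(x)-f(x-\xi)\rb\xi^{-\alpha-1}\,\rmd\xi$ by parts against $v(\xi)=-\tfrac{1}{\alpha}\xi^{-\alpha}$, the boundary term at $\infty$ vanishes because $f(x)-f(x-\xi)$ stays bounded while $\xi^{-\alpha}\to 0$, and the boundary term at $\varepsilon$ equals $\tfrac{1}{\alpha}\varepsilon^{-\alpha}\lb f(x)-f(x-\varepsilon)\rb=O(\varepsilon^{1-\alpha})\to 0$ by a first-order Taylor expansion of $f$ together with $\alpha<1$. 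What remains is $\tfrac{1}{\alpha}\int_0^\infty \xi^{-\alpha}f'(x-\xi)\,\rmd\xi$, so multiplying by $\alpha/\Gamma(1-\alpha)$ gives exactly $\lb {}^C\!D^\alpha_+ f\rb(x)$. Combining the two identifications yields $D^\alpha_+ f={}^C\!D^\alpha_+ f={}^M\!D^\alpha_+ f$ on $\cS(\R)$, and the right-sided case follows verbatim.

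The main obstacle is purely the analytic bookkeeping around the singular kernels: justifying the differentiation under the integral sign in the Riemann--Liouville step and the vanishing of the two boundary terms in the Marchaud step. No genuinely hard estimate is needed, since all of these are controlled by the Schwartz decay of $f$ and $f'$ and by $0<\alpha<1$; the only care required is to exhibit dominating functions uniform in $x$ on compact sets and to pass to the $\varepsilon\to 0$ limit cleanly.
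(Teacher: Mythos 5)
Your proposal is correct, and its first half is exactly the paper's argument: substitute $u=x-t$ in \eqref{eq:lsRLD}, differentiate under the integral sign (the paper's dominating function $h(s)= \norm{f'}_\infty \1_{[0,1)}(s)s^{-\alpha} + \sup_{s\in\R}\abs{sf'(s)}\,\1_{[1,\infty)}(s)s^{-\alpha-1}$ is precisely your ``bounded near $u=0$, rapidly decaying at infinity'' domination made explicit), and read off the Caputo form \eqref{eq:lsCD}. Where you diverge is the Marchaud identification. The paper goes from Caputo to Marchaud by writing $s^{-\alpha}=\alpha\int_s^\infty \xi^{-\alpha-1}\,\rmd\xi$ and applying Fubini, so that $\int_0^\infty\int_0^\xi f'(x-s)\,\rmd s\,\xi^{-\alpha-1}\,\rmd\xi$ collapses to $\int_0^\infty \bigl(f(x)-f(x-\xi)\bigr)\xi^{-\alpha-1}\,\rmd\xi$ with no boundary terms to track; you instead go from Marchaud to Caputo by integrating the truncated integral $\int_\varepsilon^\infty \bigl(f(x)-f(x-\xi)\bigr)\xi^{-\alpha-1}\,\rmd\xi$ by parts against $-\xi^{-\alpha}/\alpha$ and killing both boundary terms (boundedness of the difference at infinity, the first-order Taylor bound $f(x)-f(x-\varepsilon)=O(\varepsilon)$ giving $O(\varepsilon^{1-\alpha})$ at the lower end). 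The two manipulations are equivalent kernel identities, but yours has the small virtue of engaging the paper's actual definition of $^M\!D^\alpha_\pm f$ as the $\varepsilon\to 0$ limit of the truncations \eqref{eq:lsTMD}, \eqref{eq:rsTMD} --- the paper's Fubini computation implicitly uses that for $f\in\cS(\R)$ the untruncated Marchaud integral converges absolutely, which your boundary-term estimates verify along the way; conversely the paper's route avoids integration-by-parts bookkeeping entirely. Both proofs are complete and at the same level of difficulty.
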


\begin{proof}
The integral $\int_0^\infty f(x\mp s) s^{-\alpha}\,\rmd s$ for $f\in\cS(\R)$ exists since
\begin{align*}
	&\int_0^\infty \abs{f(x\mp s)} s^{-\alpha}\,\rmd s = \int_0^1 \abs{f(x\mp s)} s^{-\alpha}\,\rmd s + \int_1^\infty \abs{f(x\mp s)} s^{-\alpha}\,\rmd s \\
	&\leq \norm{f}_\infty \int_0^1 s^{-\alpha}\,\rmd s + \sup_{s\in\R} \abs{s f(s)} \int_1^\infty s^{-\alpha-1}\,\rmd s <\infty.
\end{align*}
The coordinate transformation $s=x-t$ yields
\begin{align*}
	\lb D^\alpha_+f\rb(x) &= \frac{1}{\Gamma(1-\alpha)}\frac{\rmd}{\rmd x}\int_{-\infty}^x f(t) (x-t)^{-\alpha}\,\rmd t \\ &= \frac{1}{\Gamma(1-\alpha)}\frac{\rmd}{\rmd x} \int_0^\infty f(x-s) s^{-\alpha}\,\rmd s.
\end{align*}
Interchange of derivative and integral is justified since $\abs{f'(x\mp s)}s^{-\alpha}$ is dominated by the integrable function
\[
	h(s)= \norm{f'}_\infty \1_{[0,1)}(s)s^{-\alpha} + \sup_{s\in\R} \abs{sf'(s)} \1_{[1,\infty)}(s) s^{-\alpha-1}.
\]
This yields
\[
	\lb D^\alpha_+f\rb(x) = \frac{1}{\Gamma(1-\alpha)} \int_0^\infty f'(x-s)s^{-\alpha}\,\rmd s.
\]
By similar arguments we obtain
\[
	\lb D^\alpha_-f\rb(x) = \frac{-1}{\Gamma(1-\alpha)} \int_0^\infty f'(x+s)s^{-\alpha}\,\rmd s.
\]
%For the right-sided derivative use $s=t-x$:
%\begin{align*}
%	\lb D^\alpha_-f\rb(x) &= \frac{-1}{\Gamma(1-\alpha)}\frac{\rmd}{\rmd x}\int_x^\infty f(t) (t-x)^{-\alpha}\,\rmd t \\
%	&= \frac{-1}{\Gamma(1-\alpha)}\frac{\rmd}{\rmd x} \int_0^\infty f(x+s) s^{-\alpha}\,\rmd s \\
%	&= \frac{-1}{\Gamma(1-\alpha)} \int_0^\infty f'(x+s) s^{-\alpha}\,\rmd s.
%\end{align*}
Comparing with \eqref{eq:lsCD} and \eqref{eq:rsCD} it follows that for all $f\in\cS(\R)$ 
\[
	D^\alpha_\pm f = {}^C\!D^\alpha_\pm f.
\]
Furthermore since $s^{-\alpha} = \alpha \int_s^\infty \xi^{-\alpha-1}\,\rmd\xi$:
\begin{align*}
	\lb D^\alpha_+ f\rb(x) &= \frac{\alpha}{\Gamma(1-\alpha)} \int_0^\infty f'(x- s)  \int_s^\infty \xi^{-\alpha-1}\,\rmd\xi\,\rmd s \\
	&= \frac{\alpha}{\Gamma(1-\alpha)} \int_0^\infty \int_0^\xi f'(x- s) \xi^{-\alpha-1}\,\rmd s\,\rmd\xi \\ 
	&= \frac{\alpha}{\Gamma(1-\alpha)} \int_0^\infty \frac{f(x)-f(x-\xi)}{\xi^{\alpha+1}}\,\rmd\xi.
\end{align*}
And analogously
\[
	\lb D^\alpha_+ f\rb(x) = \frac{\alpha}{\Gamma(1-\alpha)} \int_0^\infty \frac{f(x)-f(x+\xi)}{\xi^{\alpha+1}}\,\rmd\xi.
\] 
\eqref{eq:lsMD} and \eqref{eq:rsMD} shows that
\[
	D^\alpha_\pm f = {}^M\!D^\alpha_\pm f,\quad f\in\cS(\R).
\]
\end{proof}

\section{The H-function}\label{Sec:H-function}
The H-function was discovered by Charles Fox in 1961 \cite{Fox61} and is a generalization of the G-function of Meijer. The definition is as follows: Let $m,n,p,q\in\N$, $0\leq n\leq p$ and $1\leq m\leq q$. Let $A_i,B_j\in\R$ be positive and $a_i,b_j\in\R$ or $\C$ arbitrary for $1\leq i\leq p,1\leq j\leq q$. Then
\[
	H^{m\,n}_{p\,q}\lb z \left| \begin{matrix} (a_p,A_p) \\ (b_q,B_q) \end{matrix} \right. \rb = H^{m\,n}_{p\,q}\lb z \left| \begin{matrix} (a_1,A_1),\dots,(a_p,A_p) \\ (b_1,B_1),\dots,(b_q,B_q) \end{matrix} \right. \rb = \frac{1}{2\pi i}\int_\cL \Theta(s)z^{-s}\,\rmd s,
\]
where
\[
	\Theta(s) = \frac{\lb \prod_{j=1}^m \Gamma(b_j+sB_j) \rb \lb \prod_{j=1}^n \Gamma(1-a_j-sA_j)\rb}{\lb\prod_{j=m+1}^q \Gamma(1-b_j-sB_j)\rb\lb\prod_{j=n+1}^p \Gamma(a_j+sA_j)\rb}.
\]
For further details concerning the contour $\cL$ and existence of $H$ we refer to \cite{MSH10}. The series expansion of $H$ for $\abs{z}>0$ is given in \cite{PBM90}
\begin{multline}\label{eq:Hseries}
	H^{m\,n}_{p\,q}\lb z \left| \begin{matrix} (a_p,A_p) \\ (b_q,B_q) \end{matrix} \right. \rb = \sum_{i=1}^m \sum_{k=0}^\infty \frac{\prod_{j=1,j\neq i}^m \Gamma(b_j-(b_i+k)\tfrac{B_j}{B_i})}{\prod_{j=m+1}^q \Gamma(1-b_j+(b_i+k)\tfrac{B_j}{B_i})} \\ \times\frac{\prod_{j=1}^n \Gamma(1-a_j+(b_i+k)\tfrac{A_j}{B_i})}{\prod_{j=n+1}^p \Gamma(a_j-(b_i+k)\tfrac{A_j}{B_i})} \frac{(-1)^k z^{(b_i+k)/B_i}}{k!B_i},
\end{multline}
under the condition that $\sum_{j=1}^q B_j - \sum_{j=1}^p A_j >0$ and $B_k(b_j+l) \neq B_j(b_k+s)$ for $1\leq j,k \leq m, j\neq k$ and $l,s\in\N_0$.

\begin{remark}\label{Rem:Hprop}
A change of variables in the definition of the $H$-function shows the following properties, see also e.g.~\cite{KST06}:
\begin{enumerate}
	\item For all $z\in\C\setminus\lcb 0\rcb$ such that the $H$-function is defined the following inversion formula holds:
	\[
		H^{m\,n}_{p\,q}\lb z \left| \begin{matrix} (a_p,A_p) \\ (b_q,B_q) \end{matrix} \right. \rb = H^{n\,m}_{q\,p}\lb 1/z \left| \begin{matrix} (1-b_q,B_q) \\ (1-a_p,A_p) \end{matrix} \right. \rb.
	\]
	\item For all $\sigma\in\C$ and for all $z\in\C$ such that the $H$-function is defined there holds the formula
	\[
		z^\sigma H^{m\,n}_{p\,q}\lb z \left| \begin{matrix} (a_p,A_p) \\ (b_q,B_q) \end{matrix} \right. \rb = H^{m\,n}_{p\,q}\lb z \left| \begin{matrix} (a_p+\sigma A_p,A_p) \\ (b_q+\sigma B_q,B_q) \end{matrix} \right. \rb.
	\]
\end{enumerate}
\end{remark}

The $M$-Wright function $M_\beta$ for $0<\beta<1$ was introduced by Mainardi as an auxiliary function when finding the Green's function to the time-fractional diffusion-wave equation, see e.g.~\cite{Mai96, Mai96b}. Its series expansion is given by
\[
	M_\beta(z) = \sum_{n=0}^\infty \frac{(-z)^n}{n!\Gamma(-\beta n + 1-\beta)},\quad z\in\C.
\]
For all $z\in\C$ the following holds \cite{GJRS14}:
\begin{equation}\label{eq:LaplaceMbeta}
	\int_0^\infty M_\beta(r) \exp(-rz)\,\rmd r = \rmE_\beta(-z).
\end{equation}
For more details we refer to \cite{MMP10} and the references therein. 

Let us now compute the expectation of grey Donsker's delta:
\begin{align*}
	\E_{\mu_\beta}\lb\delta(a+B_t^\beta)\rb &= \lb T_{\mu_\beta}\delta(a+B_t^\beta)\rb(0) \\
	&= \frac{1}{2\pi} \int_\R \lb T_{\mu_\beta} \exp(ix(B_t^\beta+a))\rb(0)\,\rmd x \\
	&=\frac{1}{2\pi} \int_\R \e^{ixa} \rmE_\beta\lb-\halb x^2t^\beta\rb \,\rmd x \\
	&=\frac{1}{2\pi} \int_\R \e^{ixa} \int_0^\infty M_\beta(s) \exp\lb-\halb x^2t^\beta s\rb\,\rmd s\,\rmd x \\
	&= \frac{1}{2\pi} \int_0^\infty M_\beta(s) \int_\R \exp\lb-\halb x^2t^\beta s + ixa\rb\,\rmd x\,\rmd s \\
	&= \frac{1}{\sqrt{2\pi t^\beta}} \int_0^\infty M_\beta(s) s^{-1/2} \exp\lb -\frac{a^2}{2t^\beta}s^{-1}\rb\,\rmd s  .
\end{align*}
We express the Mainardi function and the exponential by H-functions. First we have 
\[
	M_\beta(s) = H^{1\,0}_{1\,1}\lb s \left| \begin{matrix} (1-\beta,\beta) \\ (0,1) \end{matrix} \right. \rb \quad \text{and} \quad	\e^{-z} = H^{1\,0}_{0\,1}\lb z \left| \begin{matrix} -- \\ (0,1) \end{matrix} \right. \rb.
\]
Thus we get
\begin{align*}
	\E_{\mu_\beta}\lb\delta(a+B_t^\beta)\rb = \frac{1}{\sqrt{2\pi t^\beta}} \int_0^\infty s^{1/2-1} &H^{1\,0}_{1\,1}\lb s \left| \begin{matrix} (1-\beta,\beta) \\ (0,1) \end{matrix} \right. \rb \\ &\quad\quad H^{1\,0}_{0\,1}\lb \frac{a^2}{2t^\beta}s^{-1} \left| \begin{matrix} -- \\ (0,1) \end{matrix} \right. \rb \,\rmd s.
\end{align*}
Using the inversion formula for the H-function, see Remark \ref{Rem:Hprop} we end up with
\begin{align*}
	\E_{\mu_\beta}\lb\delta(a+B_t^\beta)\rb = \frac{1}{\sqrt{2\pi t^\beta}} \int_0^\infty s^{1/2-1} &H^{1\,0}_{1\,1}\lb s \left| \begin{matrix} (1-\beta,\beta) \\ (0,1) \end{matrix} \right. \rb \\ &\quad\quad H^{0\,1}_{1\,0}\lb \frac{2t^\beta}{a^2}s \left| \begin{matrix} (1,1) \\ -- \end{matrix} \right. \rb \,\rmd s.
\end{align*}
Lemma 2 in \cite{MS69} shows that 
\begin{align*}
	&\int_0^\infty x^{\sigma-1} H^{m\,n}_{p\,q}\lb \alpha x \left| \begin{matrix} (a_p,A_p) \\ (b_q,B_q) \end{matrix} \right. \rb  H^{k\,l}_{r\,s}\lb \beta x \left| \begin{matrix} (c_r,C_r) \\ (d_s,D_s) \end{matrix} \right. \rb \, \rmd x \\
	&\quad\quad = \alpha^{-\sigma} H^{k+n\,l+m}_{q+r\,p+s}\lb \beta/\alpha \left| \begin{matrix} (1-b_q-B_q\sigma,B_q),(c_r,C_r) \\ (1-a_p-A_p\sigma,A_p),(d_s,D_s) \end{matrix} \right. \rb,
\end{align*}
under the following conditions
\begin{align*}
	&\Re(\sigma + b_j/B_j + d_i/D_i) > 0,\quad j=1,\dots,m,\;i=1,\dots,k, \\
	&\Re(\sigma + (a_j-1)/A_j + (c_j-1)/C_j) < 0,\quad j=1,\dots,n,\;i=1,\dots,l, \\
	&\lambda_1 = \sum_{j=1}^m B_j - \sum_{j=m+1}^q B_j + \sum_{j=1}^n A_j - \sum_{j=n+1}^p A_j > 0, \\
	&\lambda_2 = \sum_{j=1}^k D_j - \sum_{j=k+1}^s D_j + \sum_{j=1}^l C_j - \sum_{j=l+1}^r C_j > 0, \\
	&\abs{\arg\alpha} < \lambda_1\pi/2,\quad \abs{\arg\beta} < \lambda_2\pi/2.
\end{align*}
In our case we have $\sigma=1/2$ and $m=p=q=1$, $n=0$ and $\alpha=1$ and $k=s=0$, $l=r=1$ and $\beta=2t^\beta/a^2$ and $\lambda_1 = 1-\beta >0$ and $\lambda_2 = 1 >0$. Thus all conditions are fulfilled and we obtain
\begin{align*}
	&\E_{\mu_\beta}\lb\delta(a+B_t^\beta)\rb = \frac{1}{\sqrt{2\pi t^\beta}} H^{0\,2}_{2\,1}\lb \frac{2t^\beta}{a^2} \left| \begin{matrix} (1/2,1),(1,1) \\ (\beta/2,\beta) \end{matrix} \right. \rb .
\end{align*} 
By the inversion formula from Remark \ref{Rem:Hprop} it holds
\[
	\E_{\mu_\beta}\lb\delta_a(\lab\cdot,\eta\rab)\rb = \frac{1}{\sqrt{2\pi \lab\eta,\eta\rab}} H^{2\,0}_{1\,2}\lb \frac{a^2}{2\lab\eta,\eta\rab} \left| \begin{matrix} (1-\beta/2,\beta) \\ (1/2,1),(0,1)\end{matrix} \right. \rb .
\] 
The definition of the $H$-function implies 
\[
	\E_{\mu_\beta}\lb\delta_a(\lab\cdot,\eta\rab)\rb = \frac{1}{\sqrt{2\pi \lab\eta,\eta\rab}} H^{2\,0}_{1\,2}\lb \frac{a^2}{2\lab\eta,\eta\rab} \left| \begin{matrix} (1-\beta/2,\beta) \\ (0,1),(1/2,1)\end{matrix} \right. \rb .
\]
Applying \eqref{eq:Hseries} we finally have
\begin{align*}
	\E_{\mu_\beta}\lb\delta_a(\lab\cdot,\eta\rab)\rb &= \frac{1}{\sqrt{2\pi \lab\eta,\eta\rab}} \sum_{k=0}^\infty \frac{(-1)^k}{k!} \frac{\Gamma(1/2-k)}{\Gamma(1-\beta/2-\beta k)} \lb\frac{a^2}{2\lab\eta,\eta\rab}\rb^k  \\ &\quad\quad + \frac{a}{2\lab\eta,\eta\rab\sqrt{\pi}} \sum_{k=0}^\infty \frac{(-1)^k}{k!} \frac{\Gamma(-1/2-k)}{\Gamma(1-\beta-\beta k)}\lb\frac{a^2}{2\lab\eta,\eta\rab}\rb^k.
\end{align*}

\bibliographystyle{alpha}
\bibliography{/home/florian/Latex/bibliography}
 
\end{document}